\newlength{\defbaselineskip}
\newcommand{\be}{\begin{eqnarray}}
\newcommand{\ee}{\end{eqnarray}}
\newcommand{\bestar}{\begin{eqnarray*}}
\newcommand{\eestar}{\end{eqnarray*}}
\newcommand{\ignore}[1]{}
{} \theoremstyle{plain}
\newtheorem{thm}{Theorem}[section]
\newtheorem{lemma}{Lemma}[section]
\newtheorem{prop}{Proposition}[section]
\theoremstyle{definition}
\newtheorem{rem}{Remark}[section]
\numberwithin{equation}{section}
\begin{document}

\title{Berry-Esseen bounds for step-reinforced random walks}
\author{Zhishui Hu
\thanks{E-mail: huzs@ustc.edu.cn}\\
 University of Science and Technology of
China,  Hefei, Anhui 230026, China}
\date{}
\maketitle

\begin{abstract}
We study both the positively and negatively step-reinforced random walks with  parameter $p$. For a step distribution $\mu$ with finite second moment, 
the positively  step-reinforced random walk with $p\in [1/2,1)$ 
and the negatively step-reinforced random walk with $p\in (0,1)$ converge to a normal distribution under suitable normalization. In this work, we  obtain   the rates of convergence to normality  for both cases under the assumption that $\mu$ has a
finite third moment. In the proofs, we establish a Berry-Esseen bound for general functionals of independent random variables, utilize the randomly weighted sum  representations of step-reinforced random walks,  and apply special comparison
arguments to quantify the Kolmogorov distance between a mixed normal distribution and its corresponding normal distribution.

\vskip 0.2cm \noindent{\it Key words:} Reinforcement, random walk, Berry-Esseen bound, random recursive tree, randomly weighted sum.
 \vskip 0.2cm

\noindent{\it MSC2020 subject classifications:}
   60G50;    
   60F05;     
   05C05   
\end{abstract}

\section{Introduction} \label{sectintro}

 Step-reinforced random walks, as a class of stochastic processes with memory,  have  garnered considerable attention  in recent years. Among these,
the elephant random walk (ERW) serves as a fundamental example.
The ERW is a one-dimensional discrete-time random walk on $\mathbb{Z}$ 
that retains complete memory of its entire history.
 First introduced by Sch\"{u}tz and Trimple \cite{SCH2004}, the ERW 
is characterized by a fixed parameter $q\in [0,1]$, referred to as  the memory parameter. The walk starts at position $0$ at time $n=0$,
with its initial step  determined by a symmetric Rademacher random variable taking 
values $+1$ or $-1$ with equal probability.
At each subsequent time $n\ge 2$, the ERW randomly chooses one of its previous steps.
 It then repeats that step with probability $q$ or takes an opposite step with probability $1-q$.
The asymptotic behaviour of the ERW has been extensively studied, see, for instance,
\cite{BB2016, BE2017, Col2017a, Col2017b, Fan2021, Fan2024, KT2019, KU2016, Qin2025}.

For $q\ge 1/2$, K\"{u}rsten \cite{KU2016} proposed an alternative characterization  of the ERW dynamic by introducing a new parameter $p=2(1-q)\in [0,1]$.
The initial step remains a symmetric Rademacher random variable.
However, at each step $n\ge 2$, the ERW either repeats one of its previous steps, chosen uniformly at random, with probability $1-p$, or
it takes a new independent symmetric Rademacher random variable with probability $p$. By construction,  each step  of the ERW follows the Rademacher distribution.
This framework  generalizes naturally  to arbitrary distributions on $\mathbb{R}$, denoted by $\mu$.
When  $\mu$ is an isotropic stable law,  the model is termed  the ``shark random swim" by Businger \cite{BU2018}.
More generally,  for any distribution $\mu$,  the model is defined as the positively step-reinforced random walk, which has recently been
investigated, for example, in \cite{B2021, BR2022, Bertoin2020, Bertoin2021A, Bertoin2021B, HZ2024, Qin2024}.

The  positively step-reinforced random walk is formally constructed as follows.
Let  $X_1, X_2,\cdots$ be a sequence of  independent and identically distributed (i.i.d.) random variables with distribution $\mu$.    Define
  \begin{equation}
  m_k= \mathbb{E}(X_1^k), ~~k\ge 1,~~~\mbox{and}~
  ~~ \sigma_0^2=\mbox{Var}(X_1)=m_2-m_1^2.   \label{meanvar}
  \end{equation}
 Let $\epsilon_1=1$, and let $\epsilon_2, \epsilon_3, ...$  be i.i.d. Bernoulli variables  with parameter $p \in [0,1]$.
 Let $(U_n)_{n \ge 2}$ be a sequence of independent random variables, where 
each $U_n$ is uniformly distributed on  $\{1, \cdots, n-1\}$. It is further assumed that $(X_n), (U_n)$, and $(\epsilon_n)$ are independent.
Define
\be
\text{i}(n):=\sum_{j=1}^{n} \epsilon_j \quad \quad \text{for} \quad n \ge 1. 
\label{in}
\ee
Set $\hat{X}_1=X_1$, and for $n\ge 2$,  recursively  define
  \be
   \hat{X}_n =
    \begin{cases}
      \hat{X}_{U_n}, & \mbox{if } \epsilon_n =0, \\
      X_{\text{i}(n)}, & \mbox{if } \epsilon_n =1.
    \end{cases}  \label{Xnhat}
\ee
The sequence of the partial sums
\be
\hat{S}_n=\sum_{i=1}^n \hat{X}_i,~~~~n\ge 0,     \label{hats}
\ee
 is referred to as a positively step-reinforced random walk or a noise reinforced random walk.
The  reinforcement algorithm (\ref{Xnhat}) was introduced by Simon \cite{Simon1955} to explain the appearance of a family of heavy-tailed distributions
in a wide range of empirical data.
When $\mu$ is the  symmetric Rademacher distribution, $(\hat{S}_n)_{n\ge 1}$ corresponds to the
ERW with memory parameter $q=1-p/2\in [1/2,1]$.
The ERW with  memory parameter $q$ in the remaining range $[0,1/2)$ can also be obtained as a special case of the
negatively step-reinforced random walk, introduced by Bertoin \cite{Bertoin2024}.

Set $\check{X}_1=X_1$, and for $n\ge 2$,   recursively define
  \bestar
   \check{X}_n =
    \begin{cases}
      -\check{X}_{U_n}, & \mbox{if } \epsilon_n =0, \\
      X_{\text{i}(n)}, & \mbox{if } \epsilon_n =1.
    \end{cases}
\eestar
Then the process
\be
\check{S}_n=\sum_{i=1}^n \check{X}_i,~~~~n\ge 0,    \label{checks}
\ee
 is referred to as a negatively step-reinforced random walk or a counterbalanced random walk.
The negatively step-reinforced random walk  has been studied  in 
\cite{ BR2022, Bertoin2024, HZ2024}.
When $\mu$ is the  symmetric Rademacher distribution,  $(\check{S}_n)_{n\ge 1}$ corresponds to  the
ERW with memory parameter $q=p/2\in [0, 1/2]$.

Note that if $p=1$,   both $(\hat{S}_n)_{n\ge 1}$ and  $(\check{S}_n)_{n\ge 1}$ reduce to standard random walks with i.i.d. steps. If $p=0$,  the positively step-reinforced random walk satisfies  $\hat{X}_n=X_1$ for all $n\ge 1$, while  the negatively step-reinforced random walk has steps $\check{X}_n$ equal to $X_1$ or $-X_1$. In this paper, we exclude these trivial cases 
 and always assume that $p\in (0,1)$.

Assume that $\mathbb{E}(X^2)<\infty$. When $p<1/2$, Theorem 1 in  \cite{Bertoin2021A}  shows that $n^{-p}(\hat{S}_n-m_1n)$
converges in $L^2(\mathbb{P})$ to some non-degenerate random variable $L$.
When $p\ge 1/2$, by Theorem 2 in  \cite{Bertoin2021A}  and Theorems 1.2 and 1.4 in \cite{BR2022}, we have
\be
\frac{\hat{S}_n-nm_1}{\sigma_0\sqrt{a_n}}\stackrel{d}{\rightarrow} N(0,1), \label{CLThatS}
\ee
where
\be
a_n=\left\{
\begin{array}{ll}
n/(2p-1), & p>1/2,\\
n\log n, & p=1/2.
\end{array}
\right.    \label{an}
\ee
Regarding $\check{S}_n$,
Bertoin \cite{Bertoin2024} established the central limit theorem,  which shows that
\bestar
\frac{\check{S}_n-\check{b}\, n}{\check{\sigma}\sqrt{n}}\stackrel{d}{\longrightarrow} N(0,1)
\eestar
with
\be
~\check{b}:=\frac{pm_1}{2-p}, ~~~~~~~~\check{\sigma}^2=\frac{m_2-\check{b}^2}{3-2p},   \label{checkb}
\ee
where $m_1$ and $m_2$ are defined in (\ref{meanvar}).
Bertenghi and Rosales-Ortiz \cite{BR2022} proved the functional central limit theorems for $\hat{S}_n$ and $\check{S}_n$ through the martingale method.

In this paper,  we aim to establish the Berry-Esseen bounds for $\hat{S}_n$ and $\check{S}_n$.
Let $\mathbf{Z}$ be a standard normal random variable.
For any random variable $Y$, we denote the Kolmogorov distance between $Y$ and $\mathbf{Z}$ by
\bestar
d_K(Y, \mathbf{Z})=\sup_{x\in \mathbb{R}} |\mathbb{P}(Y\le x)-\Phi(x)|,
\eestar
where $\Phi(x)$ is the standard normal distribution function.

\begin{thm} \label{th3} 
Assume that $\mathbb{E}(|X_1|^3)<\infty$ and $p\in [1/2,1)$. 
Then
\be
d_K\Big(\frac{\hat{S}_n-m_1 n}{\sigma_0 \sqrt{b_n}}, \mathbf{Z}\Big)\le C\delta_{1,n},    \label{BEhatSn}
\ee
where $m_1$ and $\sigma_0^2$ are defined in  (\ref{meanvar}),
\be
b_n=\left\{
\begin{array}{ll}
\frac{n}{2p-1}- \frac{n^{2-2p}}{(2p-1)\Gamma(2-2p)}, &  p>1/2, \\
n\log n +\gamma n, & p=1/2,
\end{array}
\right.  \label{bn2}
\ee
$\Gamma(s)=\int_0^{\infty} x^{s-1} e^{-x} dx~(s>0)$ is the Gamma function, $\gamma=\lim_{n\rightarrow \infty} (\sum_{k=1}^n k^{-1}-\ln n)$ is Euler's constant and
\be
\delta_{1,n}=\left\{
\begin{array}{ll}
n^{-1/2}, & p>2/3,\\
n^{-1/2}\log n, & p=2/3,\\
n^{3/2-3p}, & 1/2<p<2/3,\\
 (\log n)^{-3/2}, & p=1/2.
 \end{array}
\right.   \label{delta1n}
\ee
\end{thm}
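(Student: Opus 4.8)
For $k\ge 1$ let $N_{n,k}$ be the number of indices $i\le n$ with $\hat X_i=X_k$ in the recursion (\ref{Xnhat}). Then $(N_{n,k})_{k\ge1}$ is a measurable function of $(\epsilon_j,U_j)_{j\le n}$, hence independent of $(X_k)_{k\ge1}$, and $\sum_k N_{n,k}=n$, so that
\begin{equation*}
\hat S_n-m_1 n=\sum_{k\ge1}N_{n,k}(X_k-m_1).
\end{equation*}
Writing $\mathcal G_n:=\sigma(\epsilon_j,U_j:j\le n)$ and $T_n:=\sum_k N_{n,k}^2$, this is, conditionally on $\mathcal G_n$, a sum of independent centred variables with conditional variance $\sigma_0^2 T_n$ and conditional third absolute moment $\mathbb E|X_1-m_1|^3\sum_k N_{n,k}^3$. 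By the classical Berry--Esseen theorem (or the general-functional version established earlier),
\begin{equation*}
\sup_{y}\Bigl|\mathbb P\Bigl(\tfrac{\hat S_n-m_1 n}{\sigma_0\sqrt{T_n}}\le y\,\Big|\,\mathcal G_n\Bigr)-\Phi(y)\Bigr|\le C\,\frac{\sum_k N_{n,k}^3}{T_n^{3/2}} .
\end{equation*}

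\textbf{Step 2 (moment estimates for the weights).} The vector $(N_{n,k})$ evolves by a P\'{o}lya-type rule on the underlying random recursive tree: given $\mathcal G_n$, with probability $p$ a fresh weight $1$ appears, and with probability $1-p$ some $N_{n,k}$ increases by one, the index $k$ chosen with probability $N_{n,k}/n$. This gives $\mathbb E[T_{n+1}\mid\mathcal G_n]=(1+\tfrac{2(1-p)}{n})T_n+1$, and solving the resulting linear recursion for $b_n:=\mathbb E T_n$ with $b_1=1$ produces \emph{exactly} the formula (\ref{bn2}) (the $\Gamma(2-2p)$ term is the homogeneous part of the solution when $p>1/2$, while the resonance at $p=1/2$ yields the Euler constant via $b_n=nH_n$). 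Analogous recursions give $\mathbb E[\sum_k N_{n,k}^3]\le C\max\{n,\,n^{3(1-p)}\}$, with an extra factor $\log n$ at $p=2/3$, and, through a coupled system of recursions for $\mathbb E T_n^2$ and $\mathbb E[T_n\sum_k N_{n,k}^3]$ and related functionals, $\mathrm{Var}(T_n)\le C\max\{n,\,n^{4(1-p)}\}$, again up to logarithms at the boundary. One also records the elementary facts $T_n\ge N_{n,1}^2\ge1$ and $\sum_k N_{n,k}^3\le T_n^{3/2}$.

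\textbf{Step 3 (de-conditioning).} For every $x$,
\begin{equation*}
\Bigl|\mathbb P\Bigl(\tfrac{\hat S_n-m_1 n}{\sigma_0\sqrt{b_n}}\le x\Bigr)-\Phi(x)\Bigr|
\le C\,\mathbb E\!\left[\frac{\sum_k N_{n,k}^3}{T_n^{3/2}}\right]
+\Bigl|\mathbb E\,\Phi\bigl(x\sqrt{b_n/T_n}\bigr)-\Phi(x)\Bigr| .
\end{equation*}
For the first term, on $\{T_n\ge b_n/2\}$ it is at most $C\mathbb E[\sum_k N_{n,k}^3]/b_n^{3/2}$, and on the complement one uses $\sum_k N_{n,k}^3\le T_n^{3/2}$ and Chebyshev; with the estimates of Step 2 this is of order $\mathbb E[\sum_k N_{n,k}^3]/b_n^{3/2}+\mathrm{Var}(T_n)/b_n^2$, whose first summand is precisely of order $\delta_{1,n}$ in each of the four regimes of (\ref{delta1n}). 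For the second term I would Taylor-expand $t\mapsto\Phi(x/\sqrt t)$ to second order about $t=1$ on the event $\{|T_n/b_n-1|\le\epsilon\}$: since $\mathbb E[T_n/b_n-1]=0$, the first-order term vanishes, the second derivative is bounded uniformly in $x$ near $t=1$ because $x\phi(x)$ and $x^2\phi(x)$ are bounded, and the complementary event is controlled by Chebyshev; this yields a bound $C\,\mathrm{Var}(T_n)/b_n^2$. A case check shows $\mathrm{Var}(T_n)/b_n^2\le C\delta_{1,n}$ in every regime (strictly smaller except for $p>3/4$, where both sources are of order $n^{-1/2}$), which completes the proof.

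\textbf{Main obstacle.} The delicate point is the mixed-normal/normal comparison in Step 3 and the $\mathrm{Var}(T_n)$ bound feeding it: the conditional variance $\sigma_0^2T_n$ genuinely fluctuates, its relative fluctuation vanishing only polynomially for $1/2<p<3/4$ and merely like $1/\log n$ at $p=1/2$, so a naive replacement of $T_n$ by $b_n$ fails and one needs both the exact value of $\mathbb E T_n$ and a sharp enough second-moment control of $T_n$, obtained from the coupled recursions above; the optimization balancing the conditional Berry--Esseen term against this comparison term is what produces the regime-dependent exponents, the transition at $p=2/3$, and the logarithmic corrections at $p=2/3$ and $p=1/2$.
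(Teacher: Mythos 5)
Your proposal is correct and follows essentially the same route as the paper: the randomly weighted sum representation, the conditional Berry--Esseen bound with conditional variance $\sigma_0^2 Z_2(n)$, the recursions $\mathbb{E}[Z_2(n+1)\mid\mathscr{H}_n]=(1+\tfrac{2(1-p)}{n})Z_2(n)+1$ and their coupled second-moment versions giving $\mathbb{E}(Z_3(n))\asymp b_3(n)$ and $\mathrm{Var}(Z_2(n))\le Cb_4(n)$, and a second-order Taylor comparison of the mixed normal with $\Phi$ controlled by $\mathrm{Var}(Z_2(n))/b_n^2\le C\delta_{1,n}$. The only small imprecision is that the recursion's exact solution is a Gamma-function ratio, so $\mathbb{E}(Z_2(n))=b_n(1+O(n^{-1}))$ rather than exactly $b_n$ and the first-order Taylor term does not vanish identically; this contributes an extra $O(n^{-1})$ that is absorbed into $\delta_{1,n}$, exactly as in the paper's treatment.
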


\begin{rem} \label{rembn}
 Let $a_n$ be defined in (\ref{an}).  Asymptotically,  we have $a_n\sim b_n$ as $n\rightarrow \infty$. Consequently,  (\ref{CLThatS}) remains valid when $a_n$ is replaced by $b_n$.
In order to obtain a better convergence rate,  we use $b_n$ instead of $a_n$ in (\ref{BEhatSn}) (this also explains why our result is better than the existing ones  for the ERW when $1/2\le p\le 3/4$; see   Remark \ref{rem1.2}).  Notably,   for  $p\in [1/2, 1)$, the sequence $b_n$ is positive. It is obvious for $p=1/2$. And for  $p\in (1/2, 1)$, we have  $\Gamma(2-2p)>\Gamma(1)=1$ since $\Gamma(s)$ is strictly decreasing on $(0,1)$, and hence $b_n>0$. 

\end{rem}

\begin{thm} \label{th4} 
Assume that $\mathbb{E}(|X_1|^3)<\infty$ and $p\in (0,1)$.  Then
\be
d_K\Big(\frac{\check{S}_n-\check{b} n}{\check{\sigma}\sqrt{n}}, ~\mathbf{Z}\Big)\le C\delta_{2,n},
\label{checkSnBE}
\ee
where $\check{b}$ and $\check{\sigma}^2$ are definded in (\ref{checkb}), and
\bestar
\delta_{2,n}=\left\{
\begin{array}{ll}
 n^{-1/2}, & p>1/3;\\
 n^{-1/2}\log n, & p=1/3;\\
 n^{-3p/2}, & 0< p< 1/3.
 \end{array}
\right.
\eestar
\end{thm}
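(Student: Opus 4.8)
\textbf{Proof proposal for Theorem~\ref{th4}.}

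The plan is to mimic the strategy used for $\hat S_n$ but exploit the fact that the normalization here is the honest $\sqrt n$ (no logarithmic or slowly-varying correction), which should make the mixed-normal comparison step milder. First I would recall the randomly weighted sum representation of $\check S_n$: conditionally on the random recursive tree structure encoded by $(U_j)_{j\ge 2}$ and the Bernoulli switches $(\epsilon_j)_{j\ge 2}$, each innovation $X_{\mathrm i(n)}$ gets propagated down the tree with a sign that is a product of $-1$'s along the path, so that
\[
\check S_n=\sum_{k\ge 1}\varepsilon_k\, c_{n,k}\,X_k,
\]
where $c_{n,k}$ counts (with sign folded into an auxiliary $\pm1$ weight) the number of vertices $i\le n$ whose generating innovation is $X_k$, and $\varepsilon_k\in\{\pm1\}$ is a measurable function of the tree. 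The key moment facts are $\mathbb E\!\left(\sum_k c_{n,k}^2\right)\sim \frac{n}{3-2p}$ and a concentration estimate $\operatorname{Var}\!\left(\sum_k c_{n,k}^2\right)=O(n)$ (or better), which together with $\mathbb E(\check S_n)=\check b\,n$ and $\operatorname{Var}(\check S_n)=\check\sigma^2 n+O(1)$ pin down the scaling in \eqref{checkb}. These are available from the martingale/Pólya-urn analysis in \cite{Bertoin2024, BR2022}; I would cite them rather than re-derive.

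Next I would split the Kolmogorov distance through the conditional (mixed) normal. Condition on $\mathcal F=\sigma\big((U_j),(\epsilon_j)\big)$. Given $\mathcal F$, $\check S_n-\check b n$ is, up to a centering correction, a sum of independent terms $\varepsilon_k c_{n,k}(X_k-m_1)$ plus lower-order pieces; apply the classical Berry--Esseen theorem conditionally to get
\[
d_K\!\Big(\tfrac{\check S_n-\check b n}{\check\sigma\sqrt n}\,\Big|\,\mathcal F,\ \mathbf Z\Big)\le C\,\frac{\mathbb E(|X_1|^3)\sum_k |c_{n,k}|^3}{\big(\sum_k c_{n,k}^2\big)^{3/2}}\wedge 1,
\]
and then control $\mathbb E$ of the right-hand side. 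The numerator is handled by $\sum_k|c_{n,k}|^3\le (\max_k|c_{n,k}|)\sum_k c_{n,k}^2$, and the size of $\max_k|c_{n,k}|$ — the largest subtree mass attached to a single innovation — is governed by the classical fact that the largest block in this Simon/Pólya-type scheme is of order $n^{1-p}$ (this is exactly where the exponent $1-p$, and hence the three regimes $p>1/3$, $p=1/3$, $p<1/3$ in $\delta_{2,n}$, enter). Combined with $\sum_k c_{n,k}^2\asymp n$ with high probability (from the variance bound and Chebyshev, removing a negligible event), this yields a bound of order $n^{1/2}\cdot n^{1-p}/n^{3/2}=n^{-p/2}$ for the bulk — but we need $n^{-3p/2}$ in the small-$p$ regime, so the cruder $\max$ bound must be replaced by the sharper $\mathbb E\sum_k|c_{n,k}|^3=O(n^{1-3p}+ \text{something})$, i.e. one estimates the third moment of the block sizes directly via the recursive distributional structure (a Pólya urn / de Finetti computation), which gives the $n^{1-3p}$ term when $p<1/3$ and an $O(n)$ term when $p>1/3$. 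Dividing by $n^{3/2}$ gives $n^{3/2-3p}\vee n^{-1/2}$, with the logarithmic borderline at $p=1/3$.

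Then I would handle the remaining gap between the mixed normal $N\!\big(0,\ \sum_k c_{n,k}^2/(\check\sigma^2 n)\big)$ and $N(0,1)$: since
\[
d_K\big(N(0,W^2),N(0,1)\big)\le C\,\mathbb E|W^2-1|\wedge 1
\]
up to the usual one-sided refinements, and $W^2=\sum_k c_{n,k}^2/(\check\sigma^2 n)$ has mean $1+O(1/n)$ and variance $O(1/n)$, Cauchy--Schwarz gives $\mathbb E|W^2-1|=O(n^{-1/2})$, which is always dominated by $\delta_{2,n}$. I would also need to absorb: (i) the discrepancy between $\operatorname{Var}(\check S_n)=\check\sigma^2 n+O(1)$ and the exact $\check\sigma^2 n$ in the denominator — harmless, contributes $O(1/n)$; and (ii) the non-i.i.d. correction because $X_{\mathrm i(n)}$ couples the innovation index to $\mathrm i(n)$, which is the mild point requiring a careful conditioning on $(\epsilon_j)$ first so that, given the innovation count, the $X$'s used really are the first $\mathrm i(n)$ terms of an i.i.d. sequence. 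The main obstacle, as in the $\hat S_n$ case, is the third step: getting the \emph{right} power $n^{3/2-3p}$ rather than the easy $n^{-p/2}$ requires a genuine moment computation for the block-size (subtree-mass) distribution in the reinforcement tree — essentially identifying that $\mathbb E\sum_k c_{n,k}^3$ behaves like a constant times $n$ for $p>1/3$ and like $n^{1-3p}$ (up to constants/logs) for $p\le 1/3$ — and I expect this to be the technically delicate part, done either by a generating-function/Pólya-urn recursion or by the tree-functionals Berry--Esseen lemma the paper advertises in the abstract. Everything else is routine bookkeeping and an application of the classical Berry--Esseen inequality conditionally.
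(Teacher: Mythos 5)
Your overall architecture (condition on the tree/percolation structure, apply the classical Berry--Esseen theorem conditionally, then compare the resulting mixed normal with $N(0,1)$) is the same as the paper's, but two of your concrete steps are wrong in ways that would sink the proof.

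First, you have misidentified the coefficient of $X_k$ in $\check S_n$. It is not $\pm N_k(n)$ (a signed cluster size): it is the alternating sum $\Delta(T_k(n))$ over the cluster, i.e.\ the number of cluster vertices at even distance from the root minus the number at odd distance. Conditionally on the cluster having size $k$, this quantity has mean $0$ (for $k\ge 2$), variance $k/3$, and third absolute moment $O(k^{3/2})$ (via $\mathbb{E}|\Delta(\mathbb{T}_k)|^3\le(\mathbb{E}\Delta(\mathbb{T}_k)^4)^{3/4}\le 4k^{3/2}$). Consequently the Lyapunov numerator is controlled by $Z_{3/2}(n)=\sum_k k^{3/2}\nu_k(n)$, whose expectation is $\asymp n^{3(1-p)/2}$ for $p<1/3$, $\asymp n\log n$ at $p=1/3$, and $\asymp n$ for $p>1/3$; dividing by $n^{3/2}$ gives exactly $\delta_{2,n}$, including the threshold at $p=1/3$ and the rate $n^{-3p/2}$. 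Your version, which takes $|c_{n,k}|$ to be the cluster size and estimates $\sum_k|c_{n,k}|^3$, leads (by your own arithmetic) to $n^{3/2-3p}$ for $p<1/3$ --- a quantity that \emph{diverges} there --- so it does not prove the theorem in the small-$p$ regime; the exponents you quote are in fact the ones from Theorem~\ref{th3}, where no internal cancellation occurs. The cancellation inside each cluster is the whole point of the counterbalanced model and cannot be folded into a single $\pm 1$ prefactor.

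Second, you treat the conditional mean as a negligible ``centering correction,'' but when $m_1\ne 0$ it is not: given the tree and percolation, $\mathbb{E}(\check S_n\mid\mathscr{G}_n)=m_1\sum_k\Delta(T_k(n))$, which after Bertoin's distributional identity reduces to $m_1\nu_1(n)$, and $\nu_1(n)-\tfrac{np}{2-p}$ fluctuates at scale $\sqrt n$ --- the same order as the normalization. It contributes the variance piece $m_1^2\sigma_1^2$ to $\check\sigma^2=\sigma_2^2+m_1^2\sigma_1^2$, so your mixed normal has a \emph{random mean} of order one, not just a random variance, and the conditional variance concentrates at $\sigma_2^2 n<\check\sigma^2 n$ rather than at $\check\sigma^2 n$ as you assert. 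Handling this requires a Berry--Esseen bound for $\nu_1(n)$ itself (Proposition~\ref{lemmanu1nBE}), which is where most of the paper's technical work lives (the functional Berry--Esseen theorem of Section~\ref{Sect3.1}, percolation on general graphs, and the random recursive tree estimates), together with the Gaussian convolution identity to recombine the two components into $N(0,\check\sigma^2)$. Your proposal omits this entirely; it would only be valid in the centered case $m_1=0$.
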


\begin{rem}\label{rem1.2}
Recall that when $\mu$ is the  symmetric Rademacher distribution, $\hat{S}_n$ and $\check{S}_n$ correspond to the
ERW with memory parameters $q=1-p/2\in [1/2,1]$ and $q=p/2\in [0,1/2]$, respectively.   Berry-Esseen bounds for the ERW have been established in \cite{Fan2021, Fan2021a, Fan2024, HOT2023, Qin2025}. The best existing  convergence  rate for the ERW was otained in Theorem 3 of \cite{Fan2021}, which derived the bounds
\be
&&d_K\Big(\frac{\hat{a}_n\hat{S}_n-(1-p)}{\sqrt{\hat{v}_n}}, \mathbf{Z}\Big)\le  \left\{
\begin{array}{ll}
Cn^{-1/2}, & 3/4<p<1,\\
C\hat{v}_n^{-1}, &  1/2\le p\le 3/4,
\end{array}\right.   \label{ERW1}
\ee
and 
\be 
&& d_K\Big(\frac{\check{a}_n\check{S}_n+(1-p)}{\sqrt{\check{v}_n}}, \mathbf{Z}\Big)\le Cn^{-1/2},\qquad 0<p<1,  \label{ERW2}
\ee 
where $\hat{a}_1=\check{a}_1=1$, and for $n\ge 2$,
\bestar
&&\hat{a}_n=\frac{\Gamma(n)\Gamma(2-p)}{\Gamma (n+1-p)}\sim n^{p-1},~~~~
\hat{v}_n=\sum_{k=1}^n \hat a_k^2\sim \left\{
\begin{array}{ll}
Cn^{2p-1}, & p>1/2,\\
C\log n, & p=1/2.
\end{array}\right.\\
&&\check{a}_n=\frac{\Gamma(n)\Gamma(p)}{\Gamma (n+p-1)}\sim n^{1-p},~~~~
\check{v}_n=\sum_{k=1}^n \check a_k^2\sim 
Cn^{3-2p},~~0<p<1.
\eestar
Comparing the above results with  Theorems \ref{th3} and \ref{th4}
in this special case,  the convergence rate in Theorem \ref{th3} is better than  \eqref{ERW1} 
for $1/2\le p\le 3/4$,  whereas the  rate in Theorem \ref{th4} is weaker than \eqref{ERW1} for $p\le 1/3$. 
Note that when $p=0$, it follows immediately from Section 2 of \cite{Bertoin2024}  that $\check{S}_n/\sqrt{n}$ 
converges to $N(0,1/3)$ at a rate of $O(n^{-1/2})$ under  the symmetric Rademacher distribution.  In contrast,  for a general distribution $\mu$, $\check{S}_n$ cannot be normalized to converge to a normal distribution
because,  at $p=0$, $\check{X}_n$ equals either $X_1$ or $-X_1$. Therefore, in some sense, it is reasonable in Theorem \ref{th4} that for small $p$, $\check{S}_n$ fails to achieve a convergece  rate of order  $O(n^{-1/2})$.
\end{rem}

Our main results, Theorems \ref{th3} and \ref{th4}, will be proved  based on  the fact that both step-reinforced random walks, $\hat{S}_n$ and $\check{S}_n$, can be expressed as randomly weighted sums
(see (\ref{hatsnsum}) and (\ref{checksnsum})).
It is noted that, conditioned on an appropriate $\sigma$-filed,
these randomly weighted sums can be regarded as  sums of independent random variables. 
By applying the classical Berry-Esseen theorem, 
we derive an upper bound for the Kolmogorov distance between the distribution 
 of $(\hat{S}_n-m_1 n)/(\sigma_0 \sqrt{b_n})$ in (\ref{BEhatSn}) (or $(\check{S}_n-\check{b} n)/(\check{\sigma}\sqrt{n})$ in (\ref{checkSnBE}))  and a mixed normal distribution.
 Theorems \ref{th3} and \ref{th4} are then obtained  through  special comparison arguments that quantify  the Kolmogorov distance between a mixed normal distribution and its corresponding normal distribution.

The proof of Theorem \ref{th4} further relies on  Proposition \ref{lemmanu1nBE}  in Section \ref{sect2}.
To establish  Proposition \ref{lemmanu1nBE}, we first
introduce a Berry-Esseen theorem  for general functionals of independent random variables in Subsection \ref{Sect3.1}. By this theorem, we obtain a Berry-Esseen bound
for the number of vertices with a specified  degree 
 in  Bernoulli bond percolation on  general finite graphs (Proposition \ref{proppercolation}) as well as
a Berry-Esseen bound concerning random recursive trees
(Lemma \ref{lemma41}), both of which are essential for proving Proposition \ref{lemmanu1nBE}.

The remainder of this work is organized as follows.
 Section \ref{sect2}  presents the proofs of Theorems \ref{th3}-\ref{th4}.
To prove the three propositions used in the main proof, we introduce a Berry-Esseen theorem for functionals of independent random variables in Section \ref{sect3}, along with a Berry-Esseen theorem for percolation on  general finite graphs and some related properties of percolation on random recursive trees. The proofs of these propositions are provided in Section  \ref{secta3}.

Throughout this paper,  $C$ is a positive constant not depending on $n$ that may take a different value in each appearance.
We use $O(\cdot)$ to denote a quantity that is bounded in absolute value by the quantity in the
parentheses multiplied by a constant not depending on $n$.
To simplify notation, let $x\vee y$ and $x\wedge y$ be the minimum and maximum of $x$ and $y$, respectively.
For two sequences of positive numbers $(c_n)$ and $(d_n)$, we write $c_n \asymp d_n$
if and only if $0<\liminf_{n\rightarrow\infty} c_n/d_n\le \limsup_{n\rightarrow\infty} c_n/d_n<\infty$.

\section{Proofs of the main results} \label{sect2}
First,  we will express
$\hat{S}_n$ and $\check{S}_n$ as randomly weighted sums.

For every $n,j\in \mathbb{N}$, we write
\be
N_j(n):=\#\{l\le n: \hat{X}_l=X_j\}    \label{defNjn}
\ee
for the number of occurrences of the variable $X_j$ in the sequence $\{\hat{X}_l: 1\le l\le n\}$, and
\be
\nu_k(n):=\#\{1\le j\le i(n): N_j(n)=k\},~~~~k\in \mathbb{N} \label{nuk}
\ee
for the number of such variables that have occurred exactly $k$ times.
It follows from the definition of $\hat{S}_n$ that $\{N_j(n), ~1\le j\le n, ~n=1,2,\cdots\}$
is independent of $\{X_j, j=1, 2,\cdots\}$, and
\be
\hat{S}_n=\sum_{j=1}^n N_j(n) X_j.    \label{hatsnsum}
\ee

In our study of $\check{S}_n$, 
we adopt the notation defined in  \cite{Bertoin2024}.
For any $n\ge 1$ and $1\le j\le i(n)$, let $l_1<l_2<\cdots l_k$ be the increasing sequence of steps at which $X_j$
appears in $\{\hat{X}_l: 1\le l\le n\}$, where $k=N_j(n)\ge 1$. We define $T_j(n)$ as a rooted tree on $\{1,2,\cdots, k\}$ with root $1$ such that
for every $1\le a<b\le k$, $(a, b)$ is an edge of $T_j(n)$ if and only if $U_{l_b}=l_a$. 
By convention,  assume that $T_j(n)$ is the empty graph if $i(n)<j\le n$.
For any rooted tree $T$, let $\Delta(T)$
denote the difference obtained by subtracting  the number of vertices at odd distances from the root from the number  at even distances. 
Then we have
\be
\check{S}_n=\sum_{j=1}^{n} \Delta(T_j(n)) X_j,    \label{checksnsum}
\ee 
where $\{\Delta(T_j(n)), ~1\le j\le n, ~n=1,2,\cdots\}$
is independent of $\{X_j, j=1, 2,\cdots\}$.

In the proofs of the main results, we will use  the following properties of $\{\nu_k(n)\}$. Detailed  proofs of these propositions are deferred to Section \ref{secta3}.

\begin{prop} \label{lemmaadd2} We have
\bestar
\mathbb{E}(\nu_1(n))=\frac{np}{2-p}+O(1), ~~~~~~
\mathbb{E}(\nu_2(n))=\frac{np(1-p)}{(2-p)(3-2p)}+O(1),
\eestar
and $\mbox{Var}(\nu_i(n))\le Cn$ for $i=1,2.$
\end{prop}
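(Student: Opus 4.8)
The plan is to work with the genealogical forest underlying the construction \eqref{Xnhat}. Write $W_\ell:=(\epsilon_\ell,U_\ell)$ for $\ell\ge2$; these are i.i.d., and together with the set of roots $\{\ell:\epsilon_\ell=1\}$ and the parent map $\ell\mapsto U_\ell$ (on $\{\ell:\epsilon_\ell=0\}$) they determine a forest $\mathcal F_n$ on $\{1,\dots,n\}$ whose components are exactly the families of \eqref{defNjn}: the $j$-th family spans $N_j(n)$ vertices, and $\nu_k(n)$ counts the components of size $k$. Put $\mathcal G_n:=\sigma(W_2,\dots,W_n)$. Passing from $\mathcal F_n$ to $\mathcal F_{n+1}$, vertex $n+1$ either founds a new singleton component (probability $p$) or is joined to the uniform vertex $U_{n+1}$ (probability $1-p$); conditionally on $\mathcal G_n$, the chance that $U_{n+1}$ falls in a given size-$k$ component is $k/n$, so the chance of landing in \emph{some} size-$k$ component is $k\nu_k(n)/n$.

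For the means, this dynamics gives
\bestar
\mathbb{E}[\nu_1(n+1)\mid\mathcal G_n] &=& \nu_1(n)+p-(1-p)\tfrac{\nu_1(n)}{n},\\
\mathbb{E}[\nu_2(n+1)\mid\mathcal G_n] &=& \nu_2(n)+(1-p)\tfrac{\nu_1(n)}{n}-2(1-p)\tfrac{\nu_2(n)}{n}.
\eestar
Taking expectations, $a_n:=\mathbb{E}[\nu_1(n)]$ solves $a_{n+1}=(1-\tfrac{1-p}{n})a_n+p$ with $a_1=1$; subtracting the stationary part $\tfrac{p}{2-p}n$ leaves a \emph{homogeneous} recursion for the remainder, so $a_n-\tfrac{p}{2-p}n=O\big(\prod_{k<n}(1-\tfrac{1-p}{k})\big)=O(n^{-(1-p)})=O(1)$. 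Then $b_n:=\mathbb{E}[\nu_2(n)]$ obeys $b_{n+1}=(1-\tfrac{2(1-p)}{n})b_n+\tfrac{1-p}{n}a_n$; inserting $a_n=\tfrac{p}{2-p}n+O(1)$ gives $b_{n+1}=(1-\tfrac{2(1-p)}{n})b_n+\tfrac{p(1-p)}{2-p}+O(1/n)$, with stationary solution $\tfrac{p(1-p)}{(2-p)(3-2p)}n$; since the kernel satisfies $\prod_{j\le k<n}(1-\tfrac{2(1-p)}{k})\asymp(j/n)^{2(1-p)}$ and $2(1-p)>0$, the $O(1/k)$ perturbations sum to $O(1)$, yielding the second mean formula.

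For the variances I would invoke the Efron–Stein inequality, since $\nu_i(n)=f_i(W_2,\dots,W_n)$ is a bounded function of the independent coordinates $W_2,\dots,W_n$. Fix $k$, let $W_k'$ be an independent copy of $W_k$, and let $\nu_i^{(k)}(n)$ denote the value obtained after replacing $W_k$ by $W_k'$. The forest and its resampled version differ only in the status of vertex $k$: at most the edge $\{k,U_k\}$ is deleted and at most the edge $\{k,U_k'\}$ is inserted. The subtree $D_k$ of descendants of $k$ is determined by $\{W_m:m>k\}$ alone, hence unchanged, and $U_k,U_k'<k$ lie outside $D_k$; so the surgery merely detaches $D_k$ from one component and re-attaches it to another. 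This changes at most three component sizes, and for any fixed $i$ it changes $\nu_i$ by at most $2$, i.e.\ $|\nu_i(n)-\nu_i^{(k)}(n)|\le2$ almost surely. Efron–Stein then gives $\mathrm{Var}(\nu_i(n))\le\tfrac12\sum_{k=2}^n\mathbb{E}[(\nu_i(n)-\nu_i^{(k)}(n))^2]\le2(n-1)$, for every $i$ (in particular $i=1,2$).

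Solving the two linear recursions and the kernel estimate are routine. The delicate point is the deterministic bound $|\nu_i(n)-\nu_i^{(k)}(n)|\le2$, which rests on the structural fact that resampling one coordinate $W_k$ relocates exactly one subtree of $\mathcal F_n$; I would establish this by the short case analysis according to whether $\epsilon_k,\epsilon_k'$ are $0$ or $1$ (subtree split, merge, move, or no change), checking in the split and merge cases that the worst change to a fixed $\nu_i$ is still only $\pm2$. I expect this case analysis to be the main thing to get right.
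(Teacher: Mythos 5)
Your proposal is correct, but it follows a genuinely different route from the paper's. For the means, you run a one-step recursion for $\mathbb{E}(\nu_1(n))$ and $\mathbb{E}(\nu_2(n))$ directly (the same device the paper uses in (\ref{condrecur}) to prove Proposition \ref{lemma38}, but applied here to $\nu_1,\nu_2$ rather than to $Z_l$); your recursions, stationary solutions $\tfrac{p}{2-p}n$ and $\tfrac{p(1-p)}{(2-p)(3-2p)}n$, and kernel estimates all check out, with only the routine caveat that the factors $1-\tfrac{2(1-p)}{k}$ can be negative for a bounded number of small $k$, which does not affect the asymptotics. The paper instead conditions on the random recursive tree $\mathbb{T}_n$, writes $\mathbb{E}(\nu_1(n)\,|\,\mathbb{T}_n)=\sum_i p^{D_{n,i}}$ and $\mathbb{E}(\nu_2(n)\,|\,\mathbb{T}_n)=(1-p)\sum_{i<j}I_{i,j}p^{D_{n,i}+D_{n,j}-2}$, and evaluates these via the explicit products $a_{i,j}(x)$ of Lemmas \ref{lemma43}--\ref{lemmaadd1}, then bounds the variance through $\mathrm{Var}(\nu_i(n))=\mathrm{Var}(\mathbb{E}(\nu_i(n)|\mathbb{T}_n))+\mathbb{E}(\mathrm{Var}(\nu_i(n)|\mathbb{T}_n))$. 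Your variance argument --- Efron--Stein applied to $\nu_i(n)$ as a function of the i.i.d.\ pairs $(\epsilon_\ell,U_\ell)$, with the deterministic bound $|\nu_i(n)-\nu_i^{(k)}(n)|\le 2$ --- is the most attractive part of the proposal: the bounded-difference claim is genuinely correct, since the descendant subtree of $k$ is determined by the coordinates with index greater than $k$ and the resampling only detaches/reattaches that one subtree, so at most four component-size counts change by $\pm1$ and any fixed $\nu_i$ changes by at most $2$; this gives $\mathrm{Var}(\nu_i(n))\le 2(n-1)$ for \emph{every} $i$, not just $i=1,2$, in a few lines. The trade-off is precision: the paper's heavier computation identifies the exact leading constants $\sigma_3^2$ and $\sigma_4^2$ of $\mathrm{Var}(\mu(\mathbb{T}_n))$ and $\mathbb{E}(\sigma^2(\mathbb{T}_n))$, which are not needed for Proposition \ref{lemmaadd2} itself but are indispensable later for Proposition \ref{lemmanu1nBE} and Lemma \ref{lemma41}; your argument, while cleaner for the statement at hand, would not supply those constants.
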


\vskip 0.3cm

\begin{prop} \label{lemma38}  Define
$
Z_{l}(n)=\sum_{k=1}^n k^l \nu_k(n)
$
for $l\ge 0$.
For any $0<p<1$, we have
\be
\mathbb{E}(Z_{l}(n))  \asymp b_{l}(n),   \label{Zln}
\ee
where
\be
b_{l}(n)=\left\{
\begin{array}{ll}
n^{l(1-p)}, & l(1-p)>1;\\
 n\log n,  & l(1-p)=1;\\
 n, & l(1-p)<1.
\end{array}
\right.   \label{bln}
\ee
Moreover,
\be
\mathbb{E}(Z_{2}(n))=b_n (1+O(n^{-1}))
~~~~and~~~~
\mbox{Var}(Z_{2}(n))\le Cb_4(n),  \label{EZ2na}
\ee
 where  $b_4(n)$ is defined in (\ref{bln}) with $l=4$ and
\bestar
b_n=\left\{
\begin{array}{ll}
\frac{n}{2p-1}- \frac{n^{2-2p}}{(2p-1)\Gamma(2-2p)}, &  p\ne 1/2, \\
n\log n +\gamma n, & p=1/2.
\end{array}
\right.  \label{bn3}
\eestar
\end{prop}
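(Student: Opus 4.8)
The argument rests on the one‑step dynamics of the reinforcement algorithm (\ref{Xnhat}): at time $n+1$ the walk opens a fresh type of size one with probability $p$, and otherwise copies the type of a uniformly chosen previous step, which has current size $k$ with probability $k\nu_k(n)/n$ and is thereby promoted to size $k+1$. Since $Z_l(n)=\sum_k k^l\nu_k(n)=\sum_{j\le i(n)}N_j(n)^l$ and $(k+1)^l-k^l=\sum_{i=0}^{l-1}\binom{l}{i}k^i$, conditioning on $\mathcal{F}_n$ gives the linear recursion
\[
\mathbb{E}\big(Z_l(n+1)\big)=\Big(1+\frac{l(1-p)}{n}\Big)\mathbb{E}\big(Z_l(n)\big)+p+\frac{1-p}{n}\sum_{m=1}^{l-1}\binom{l}{m-1}\mathbb{E}\big(Z_m(n)\big).
\]
Two base cases are immediate: $Z_0(n)=i(n)$, so $\mathbb{E}(Z_0(n))=1+(n-1)p\asymp n=b_0(n)$; and $Z_1(n)=\sum_j N_j(n)=n$ is deterministic, while $Z_m(n)\ge Z_1(n)=n$ for every $m\ge1$.

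I would prove (\ref{Zln})--(\ref{bln}) by induction on $l$. Granting $\mathbb{E}(Z_m(n))\asymp b_m(n)$ for $m<l$, the inhomogeneous term in the recursion is of order $1+b_{l-1}(j)/j\asymp b_{l-1}(j)/j$ (using $b_{l-1}(j)\gtrsim j$), while the homogeneous solution is $\prod_{j=1}^{n-1}(1+\tfrac{l(1-p)}{j})\asymp n^{l(1-p)}$. Variation of constants then yields $\mathbb{E}(Z_l(n))\asymp n^{l(1-p)}\big(1+\sum_{j=1}^{n-1}b_{l-1}(j)\,j^{-1-l(1-p)}\big)$; inserting the inductively known shape of $b_{l-1}$ and splitting into the cases $l(1-p)>1$, $l(1-p)=1$, $l(1-p)<1$ reproduces $b_l(n)$ exactly, the matching lower bound coming for free since every term of the recursion is nonnegative.

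For the sharp first‑moment identity $\mathbb{E}(Z_2(n))=b_n(1+O(n^{-1}))$, the decisive simplification is that, because $Z_1(n)\equiv n$, the inhomogeneous term in the $l=2$ recursion collapses to the exact constant $p+\frac{1-p}{n}\cdot n=1$. With $z_n=\mathbb{E}(Z_2(n))$ and $\alpha=2(1-p)$ one has $z_{n+1}=(1+\alpha/n)z_n+1$, $z_1=1$, hence $z_n=c_n\sum_{j=1}^{n}c_j^{-1}$ with $c_j=\Gamma(j+\alpha)/(\Gamma(j)\Gamma(1+\alpha))$. For $\alpha\neq1$ I would telescope $\sum_j c_j^{-1}$ using the identity $\frac{\Gamma(j)}{\Gamma(j+\alpha)}=\frac{1}{1-\alpha}\big(\frac{\Gamma(j+1)}{\Gamma(j+\alpha)}-\frac{\Gamma(j)}{\Gamma(j+\alpha-1)}\big)$, which gives the closed form $z_n=\frac{1}{1-\alpha}\big(n-\frac{\Gamma(n+\alpha)}{\Gamma(n)\Gamma(\alpha)}\big)$; since $1-\alpha=2p-1$ and $\Gamma(n+\alpha)/\Gamma(n)=n^{\alpha}(1+O(n^{-1}))$, this equals $b_n+O(n^{1-2p})=b_n(1+O(n^{-1}))$. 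The degenerate case $p=1/2$ ($\alpha=1$) is even simpler: $c_j=j$, so $z_n=n\sum_{j=1}^n j^{-1}=n\log n+\gamma n+O(1)=b_n(1+O(n^{-1}))$. I expect this to be the main obstacle: a plain Stirling expansion of $c_n\sum_j c_j^{-1}$ does not pin down the coefficient of the $n^{2-2p}$ correction, so one genuinely needs the exact telescoping identity (together with the separate $p=1/2$ computation).

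For $\mathrm{Var}(Z_2(n))\le Cb_4(n)$ I would Doob‑decompose $Z_2(n)/c_n$: it is a deterministic compensator plus a martingale $\mathcal{M}_n$ with increments $\mathcal{M}_{m+1}-\mathcal{M}_m=c_{m+1}^{-1}\big(Z_2(m+1)-\mathbb{E}(Z_2(m+1)\mid\mathcal{F}_m)\big)$ (here $c_n=\prod_{j=1}^{n-1}(1+\alpha/j)\asymp n^{2(1-p)}$), so orthogonality of martingale increments gives
\[
\mathrm{Var}(Z_2(n))=c_n^2\sum_{m=1}^{n-1}c_{m+1}^{-2}\,\mathbb{E}\big[\mathrm{Var}(Z_2(m+1)\mid\mathcal{F}_m)\big].
\]
A single step changes $Z_2$ by $1$ (innovation) or $2k+1$ (copying a type of size $k$), whence $\mathrm{Var}(Z_2(m+1)\mid\mathcal{F}_m)\le\mathbb{E}[(Z_2(m+1)-Z_2(m))^2\mid\mathcal{F}_m]=p+\frac{1-p}{m}\big(4Z_3(m)+4Z_2(m)+m\big)\le C(1+Z_3(m)/m)$; taking expectations via (\ref{Zln}) gives $\mathrm{Var}(Z_2(n))\le Cn^{4(1-p)}\sum_{m=1}^{n-1}m^{-4(1-p)}\big(1+b_3(m)/m\big)$, and evaluating this sum in the four regimes $p<2/3$, $2/3\le p<3/4$, $p=3/4$, $p>3/4$ yields $Cb_4(n)$ in every case.
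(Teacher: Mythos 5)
Your proof follows the paper's route almost exactly: the same one-step transition analysis giving $\mathbb{E}(Z_l(n+1)\mid\mathscr{H}_n)=\mathbb{E}(Z_l(n))+p+\frac{1-p}{n}\sum_k k\nu_k(n)((k+1)^l-k^l)$, the same exact linear recursion $z_{n+1}=(1+\tfrac{2(1-p)}{n})z_n+1$ for $l=2$ (exploiting $Z_1(n)\equiv n$) solved by telescoping Gamma ratios, and a second-moment recursion for the variance. Your variance step is phrased as a Doob decomposition with orthogonality of martingale increments, giving the exact identity $\mathrm{Var}(Z_2(n))=c_n^2\sum_m c_{m+1}^{-2}\mathbb{E}[\mathrm{Var}(Z_2(m+1)\mid\mathcal{F}_m)]$, which is a slightly cleaner packaging of the paper's iteration $\mathrm{Var}(Z_2(n+1))=\gamma_n'\mathrm{Var}(Z_2(n))+\alpha_n$; both reduce to the same sum and both yield $Cb_4(n)$ in all regimes. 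The closed form $z_n=\frac{1}{2p-1}\big(n-\frac{\Gamma(n+2-2p)}{\Gamma(n)\Gamma(2-2p)}\big)$ and the $p=1/2$ computation agree with the paper's.

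There is, however, one genuine gap. The proposition asserts $\mathbb{E}(Z_l(n))\asymp b_l(n)$ for \emph{all real} $l\ge 0$, and the paper actually uses the non-integer case: $Z_{3/2}(n)$ and $b_{3/2}(n)$ appear in the proof of Theorem \ref{th4} (in bounding $\check{A}_n$). Your argument hinges on the binomial identity $(k+1)^l-k^l=\sum_{i=0}^{l-1}\binom{l}{i}k^i$ and an induction over integer $l$, so as written it only proves (\ref{Zln}) for $l\in\mathbb{N}$. The fix is routine but necessary: replace the exact expansion by the two-sided Taylor estimate $lk^{l-1}\le (k+1)^l-k^l\le lk^{l-1}+C_l k^{l-2}$ for $k\ge 1$, $l\ge 1$, and run the induction on $m=\lceil l\rceil$, establishing the bound simultaneously for all $l\in[m-1,m)$ using $b_{l-1}$ from the previous stage — this is precisely how the paper proceeds. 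With that modification your variation-of-constants computation and the free lower bound (from nonnegativity of all terms, $Z_l(1)=1$, and $Z_l(n)\ge Z_1(n)=n$ for $l\ge1$) go through unchanged.
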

\vskip 0.3cm

\begin{prop} \label{lemmanu1nBE} Let
\be
\sigma_{1}^2=\frac{2p(1-p)(3-p)}{(3-2p)(2-p)^2}.  \label{sigma3}
\ee
Then we have
 \be
d_K\Big(\frac{\nu_1(n)-\frac{np}{2-p}}{\sigma_{1}\sqrt{n}}, ~\mathbf{Z}\Big)\le Cn^{-1/2}. \label{nu1BE}
\ee
\end{prop}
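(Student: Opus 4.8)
The plan is to apply the Berry–Esseen theorem for general functionals of independent random variables (the theorem announced for Subsection \ref{Sect3.1}) to the quantity $\nu_1(n)$, realized as a functional of the independent triangular array $(\epsilon_j, U_j)_{2\le j\le n}$. First I would fix this representation: $\nu_1(n)$ counts the indices $j\le i(n)$ with $N_j(n)=1$, i.e.\ the fresh values $X_j$ that were introduced (via some $\epsilon_m=1$) and then never copied afterwards. The key structural observation is that $N_j(n)$, and hence the event $\{N_j(n)=1\}$, is determined by the $\epsilon$'s and the uniform choices $U_m$ for $m\le n$; so $\nu_1(n)=f\big((\epsilon_j,U_j)_{j\le n}\big)$ for an explicit $f$, and $\nu_1(n)$ is precisely the number of vertices of out-degree $0$ (leaves, in the appropriate orientation) in the random recursive structure built by the reinforcement rule. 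This is exactly the setting connected to Lemma \ref{lemma41} (Berry–Esseen for random recursive trees) and to Proposition \ref{proppercolation} (percolation on finite graphs), which the text flags as the tools feeding into this proposition.

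The main steps, in order, would be: (1) Identify $\nu_1(n)$ with a functional $f$ of the independent inputs and verify a bounded-difference / low-influence property: changing a single pair $(\epsilon_j,U_j)$ alters $\nu_1(n)$ by at most an absolute constant (flipping $\epsilon_j$ changes whether $X$-label $j$ exists, and rerouting $U_j$ moves one ``copy'' edge, each affecting $O(1)$ of the counts $N_\cdot(n)$). (2) Compute the mean: invoke Proposition \ref{lemmaadd2} to get $\mathbb{E}(\nu_1(n))=\frac{np}{2-p}+O(1)$, so centering by $\frac{np}{2-p}$ rather than the exact mean costs only $O(n^{-1/2})$ in Kolmogorov distance after dividing by $\sigma_1\sqrt n$. (3) Compute/lower-bound the variance: show $\mathrm{Var}(\nu_1(n))=\sigma_1^2 n+O(1)$ with $\sigma_1^2>0$ as in \eqref{sigma3}; the upper bound $\mathrm{Var}(\nu_1(n))\le Cn$ is already in Proposition \ref{lemmaadd2}, and the asymptotic value plus strict positivity is what forces the normalization to be the genuine one. (4) Feed the influence bound and the variance lower bound into the general Berry–Esseen functional theorem to obtain $d_K\le Cn^{-1/2}$; because each coordinate has bounded influence and the variance grows linearly, the third-moment/influence ratio that governs the bound is of order $n\cdot O(1)/(n)^{3/2}=O(n^{-1/2})$.

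I expect the main obstacle to be Step (3): pinning down the exact variance asymptotics $\mathrm{Var}(\nu_1(n))\sim \sigma_1^2 n$ with the precise constant in \eqref{sigma3}, and in particular proving $\sigma_1^2>0$ so the normalization is non-degenerate. This requires controlling $\mathbb{E}(\nu_1(n)^2)=\sum_{j,j'}\mathbb{P}(N_j(n)=1,\,N_{j'}(n)=1)$, i.e.\ understanding the joint law of two labels being ``born and never copied''; the dependence enters through the competition for the uniform parents $U_m$, and one must show the off-diagonal covariances sum to the stated constant. A secondary technical point is verifying the hypotheses of the functional Berry–Esseen theorem in the exact form stated in Section \ref{sect3} — in particular that the bounded-difference property I use matches the ``influence'' quantity appearing there, and that conditioning arguments (e.g.\ first conditioning on $i(n)=\sum_j\epsilon_j$, which concentrates around $pn$) do not inflate the bound beyond $O(n^{-1/2})$. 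Once the variance is under control, assembling the pieces is routine, using the standard fact that replacing the true mean and variance by their leading-order surrogates perturbs $d_K$ by $O(n^{-1/2})$.
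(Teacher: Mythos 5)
Your high-level plan (realize $\nu_1(n)$ as a functional of the independent pairs $(\epsilon_j,U_j)$ and invoke the Berry--Esseen theorem of Subsection \ref{Sect3.1}) points in a reasonable direction, but as written it has a genuine gap at the step you dismiss as secondary. Theorem \ref{thBE1} is \emph{not} a bounded-difference (McDiarmid-type) bound: the quantity that governs the rate is $\sum_{j}\mathbb{E}\big(\sum_{i\ne j}|\Delta_i||\Delta_{ij}|\big)^2+\sum_i\mathbb{E}(\Delta_i^4)$, where $\Delta_{ij}$ is a \emph{second-order} difference. Knowing only $|\Delta_i|,|\Delta_{ij}|\le C$ gives $\sum_j\mathbb{E}(\sum_{i\ne j}|\Delta_i||\Delta_{ij}|)^2\le Cn^3$, hence a bound of order $n^{3/2}/\mathrm{Var}(\nu_1(n))\asymp n^{1/2}$, which is useless. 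Your heuristic ``$n\cdot O(1)/n^{3/2}=O(n^{-1/2})$'' is the one for sums of independent variables and does not apply here. What actually makes the argument work is the \emph{sparsity of the interactions}: $\Delta_{ij}$ vanishes unless the edges indexed by $i$ and $j$ share an endpoint of the (random) recursive tree, and one then needs the combinatorial estimates $\sum_i\mathbb{E}(D_{n,i}^3)=O(n)$ and the careful case analysis over coincidence patterns of indices (this is exactly the content of Lemma \ref{lemma1}, the counting in the proof of Proposition \ref{proppercolation}, and the partition into classes $\mathcal{Q}_m$ in the proof of Lemma \ref{lemma41}). None of this is present in your proposal, and it is the core of the proof, not a hypothesis-checking formality. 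A second, acknowledged but unresolved, gap is the exact variance constant: you need $\mathrm{Var}(\nu_1(n))=\sigma_1^2 n+O(1)$ with $\sigma_1^2$ as in (\ref{sigma3}), and computing $\sum_{j,j'}\mathbb{P}(N_j(n)=1,N_{j'}(n)=1)$ directly is a substantial calculation that your plan leaves open. (A minor point: $\nu_1(n)$ is the number of isolated vertices of the \emph{percolated} forest, i.e.\ size-one clusters, not the number of leaves of the recursive tree.)

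For contrast, the paper does not apply the functional Berry--Esseen theorem to $\nu_1(n)$ in one shot. It conditions on the tree $\mathbb{T}_n$ first: given $\mathbb{T}_n$, the percolation variables $\epsilon_j$ are independent, and Proposition \ref{proppercolation} gives a conditional Berry--Esseen bound with conditional mean $\mu(\mathbb{T}_n)=\sum_i p^{D_{n,i}}$ and conditional variance $\sigma^2(\mathbb{T}_n)$. The unconditional law is then a \emph{mixed} normal: $\sigma^2(\mathbb{T}_n)$ concentrates at $\sigma_4^2 n$ (Lemma \ref{lemmaadd1}), while $\mu(\mathbb{T}_n)$ is itself asymptotically normal with variance $\sigma_3^2 n$ and its own $O(n^{-1/2})$ Berry--Esseen bound (Lemma \ref{lemma41}, again via Theorem \ref{thBE1} but now as a functional of the $U_j$ alone). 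The proof closes with the convolution identity $\mathbb{E}\,\Phi\big((\sigma_1 x-\mathbf{Z}_1)/\sigma_4\big)=\Phi(x)$ and the algebraic fact $\sigma_3^2+\sigma_4^2=\sigma_1^2$, which also delivers the variance constant for free via the law of total variance. If you want to pursue your one-shot route, you would have to reproduce essentially the same second-order interaction estimates in a single, more entangled computation, and separately establish the variance asymptotics; the two-stage conditioning is what keeps each of those tasks tractable.
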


\vskip 0.3cm

\subsection{Proof of Theorem \ref{th3}}

Recall that $\hat{S}_n=\sum_{j=1}^n N_j(n) X_j$ and $
Z_{l}(n)=\sum_{k=1}^n k^l \nu_k(n)
$ for $l\ge 0$, where $N_j(n)$ and  $\nu_k(n)$  are defined in  (\ref{defNjn}) and (\ref{nuk}), respectively.
Define
\be
\mathscr{G}_1=\{\emptyset, \Omega\},~~~~
 \mathscr{G}_n=\sigma(\varepsilon_j, U_j: j=2,\cdots, n), ~n\ge 2,   \label{sigmaG}
\ee
where $\emptyset$ represents the empty set and $\Omega$ is the sample space.
Then   $\mathscr{G}_n$
is independent of $\sigma\{X_j, j=1, 2,\cdots\}$ and
$N_j(n) \in \mathscr{G}_n$ for any $1\le j\le n$ and $  n\in \mathbb{N}$. Therefore, we have
$
\mathbb{E}(\hat{S}_n|\mathscr{G}_n)=m_1 \sum_{j=1}^n N_j(n)=m_1 n
$
and
\bestar
\hat{B}_n^2:=\mbox{Var}(\hat{S}_n|\mathscr{G}_n)=\sigma_0^2 \sum_{j=1}^n N_j^2(n)=\sigma_0^2\sum_{k=1}^n k^2\nu_k(n)=\sigma_0^2Z_2(n).
\eestar
Moreover,  applying the classical Berry-Esseen theorem  gives
\be
\sup_{x\in \mathbb{R}}\Big|\mathbb{P}\Big(\hat{B}_n^{-1} (\hat{S}_n-m_1 n)\le x \Big|\mathscr{G}_n\Big)-\Phi(x)\Big|\le C \Big(\frac{\hat{A}_{n}}{\hat{B}_n^{3}}\wedge 1\Big),    \label{diffhats1}
\ee
where
$
\hat{A}_n= \sum_{j=1}^n N_j^3(n)\mathbb{E}(|X_j|^{3})=\mathbb{E}(|X_1|^{3})Z_3(n)
$.
Let $b_n$ be defined in (\ref{bn2}), and note that $b_n>0$ for $p\in [1/2,1)$ (see Remark \ref{rembn}).
It follows from (\ref{diffhats1}) that
\bestar
&&\sup_{x\in \mathbb{R}}\Big|\mathbb{P}\Big(\frac{\hat{S}_n-m_1 n}{\sigma_0 \sqrt{b_n}}\le x \Big|\mathscr{G}_n\Big)-\Phi\Big( \frac{\sigma_0 \sqrt{b_n} x}{\hat{B}_n} \Big)\Big|\\
&\le& \sup_{x\in \mathbb{R}}\Big|\mathbb{P}\Big(\frac{\hat{S}_n-m_1 n}{\hat{B}_n}\le \frac{\sigma_0\sqrt{b_n} x}{\hat{B}_n} \Big|\mathscr{G}_n\Big)
-\Phi\Big( \frac{\sigma_0\sqrt{b_n} x}{\hat{B}_n} \Big)\Big|\le C\Big(\frac{\hat{A}_n}{\hat{B}_n^3}\wedge 1\Big),
\eestar
and consequently
\bestar
\sup_{x\in \mathbb{R}}\Big|\mathbb{P}\Big(\frac{\hat{S}_n-m_1 n}{\sigma_0\sqrt{b_n}}\le x \Big)-\mathbb{E}\Big(\Phi\Big( \frac{\sigma_0\sqrt{b_n} x}{\hat{B}_n}\Big) \Big)\Big| \le C\mathbb{E} \Big(\frac{\hat{A}_n}{\hat{B}_n^{3}}\wedge 1\Big). 
\eestar
Applying Proposition \ref{lemma38} gives
\bestar
\mathbb{E} \Big(\frac{\hat{A}_n}{\hat{B}_n^3}\wedge 1\Big) &\le & C \mathbb{E} \Big(\frac{Z_3(n)}{(Z_2(n))^{3/2}}\wedge 1\Big)\nonumber\\
&\le& C\mathbb{P} (Z_2(n)\le (1/2)b_n)+C\mathbb{E} \Big(\frac{Z_3(n)}{(Z_2(n))^{3/2}}I(Z_2(n)>(1/2) b_n)\Big)\nonumber\\
&\le&  C\frac{\mathbb{E}(Z_2(n)-b_n)^2}{b_n^2}+ C \frac{b_3(n)}{b_n^{3/2}}
\le C\delta_{1,n},
\eestar
where we have used the inequality (by (\ref{EZ2na}))
\be
\frac{\mathbb{E}(Z_{2}(n)-b_n)^2}{b_n^{2}}\le \frac{2(\mathbb{E}(Z_{2}(n))-b_n)^2+2\mbox{Var}(Z_2(n))}{b_n^{2}}\le Cn^{-2}+Cb_4(n)/b_n^2 \le C\delta_{1,n},  \label{Taylor3}
\ee
 $b_3(n)$ and $b_4(n)$ are defined in  (\ref{bln}), and $\delta_{1,n}$ is defined in (\ref{delta1n}).
Hence, in order to prove Theorem \ref{th3},  it suffices to show that
\be
\sup_{x\in \mathbb{R}}\Big|\mathbb{E}\Big(\Phi\Big( \frac{\sigma_0\sqrt{b_n} x}{\hat{B}_n}\Big)\Big)-\Phi(x)\Big|\le C\delta_{1,n}.   \label{phihat}
\ee

We will now proceed to prove (\ref{phihat}).
By Taylor's formula, we have
\be
\Phi\Big( \frac{\sigma_0\sqrt{b_n} x}{\hat{B}_n}\Big)-\Phi(x)
= x\phi(x) \Big(\frac{\sigma_0\sqrt{b_n}}{\hat{B}_n}-1\Big)+\frac{1}{2}x^2\phi'(\zeta_n x) \Big(\frac{\sigma_0\sqrt{b_n}}{\hat{B}_n}-1\Big)^2,  \label{Taylor1}
\ee
where $\phi(x)$ is the standard normal density function and $ (\sigma_0\sqrt{b_n}/\hat{B}_n)\wedge 1\le \zeta_n\le  (\sigma_0\sqrt{b_n}/\hat{B}_n)\vee 1$.
Let   $E_n=\{\hat{B}_n^2 > (1/2)\sigma_0^2 b_n \}$ and note that for any $x, y>0$, 
\be
\frac{x}{y}-1&=&\frac{x^2-y^2}{y(x+y)}=\frac{x^2-y^2}{2x^2}+\frac{(x^2-y^2)(2x^2-xy-y^2)}{2x^2y(x+y)}\nonumber\\
&=&\frac{x^2-y^2}{2x^2}+\frac{(x^2-y^2)^2}{2x^2y(x+y)}+\frac{(x^2-y^2)^2}{2xy(x+y)^2}.  \label{xy}
\ee
By taking $x=\sigma_0\sqrt{b_n}$ and $y=\hat{B}_n$ in (\ref{xy}) and
 applying (\ref{EZ2na}) and
(\ref{Taylor3}), we have
\be
\Big|\mathbb{E}\Big(\Big(\frac{\sigma_0\sqrt{b_n}}{\hat{B}_n}-1\Big)I_{E_n}\Big)\Big|&\le& 
\Big|\frac{\mathbb{E}(\hat{B}_n^2)-\sigma_0^2b_n}{2\sigma_0^2b_n}\Big|+\mathbb{E}\Big|\frac{\hat{B}_n^2-\sigma_0^2b_n}{2\sigma_0^2b_n} I_{E_n^c}\Big|+
\mathbb{E}\Big(\frac{(\hat{B}_n^2-\sigma_0^2b_n)^2}{\sigma_0^3b_n^{3/2}\hat{B}_n}I_{E_n}\Big)\nonumber\\
&\le& 
\Big|\frac{\mathbb{E}(\hat{B}_n^2)-\sigma_0^2b_n}{2\sigma_0^2b_n}\Big|+\frac{1}{2} \mathbb{P}(\hat{B}_n^2\le (1/2)\sigma_0^2 b_n)+
\sqrt{2}\mathbb{E}\Big(\frac{(\hat{B}_n^2-\sigma_0^2b_n)^2}{\sigma_0^4b_n^2}\Big)\nonumber\\
&\le& 
\Big|\frac{\mathbb{E}(\hat{B}_n^2)-\sigma_0^2b_n}{2\sigma_0^2b_n}\Big|+\frac{C\mathbb{E}(\hat{B}_n^2-\sigma_0^2b_n)^2}{\sigma_0^4b_n^2}\nonumber\\
&=& \Big|\frac{\mathbb{E}(Z_2(n))-b_n}{2b_n}\Big|+\frac{C\mathbb{E}(Z_{2}(n)-b_n)^2}{b_n^{2}}\nonumber\\
&\le& Cn^{-1}+C\delta_{1,n}\le C\delta_{1,n}.  \label{Taylor2}
\ee
Similarly,  
\bestar
\mathbb{E}\Big(\Big(\frac{\sigma_0\sqrt{b_n}}{\hat{B}_n}-1\Big)^2I_{E_n}\Big)&\le&
2\mathbb{E}\Big(\frac{\hat{B}_n^2}{\sigma_0^2 b_n}\Big(\frac{\sigma_0\sqrt{b_n}}{\hat{B}_n}-1\Big)^2I_{E_n}\Big)\\
&\le&  2\mathbb{E}\Big(\frac{\hat{B}_n}{\sigma_0\sqrt{b_n}}-1\Big)^2
\le \frac{2\mathbb{E}(\hat{B}_n^2-\sigma_0^2b_n)^2}{\sigma_0^4b_n^2}\le C\delta_{1,n}.
\eestar
Therefore,
\be
\mathbb{E} \Big(x^2 |\phi'(\zeta_n x) |\Big(\frac{\sigma_0\sqrt{b_n}}{\hat{B}_n}-1\Big)^2I_{E_n}\Big)
&\le&  c_0\mathbb{E} \Big(\zeta_n^{-2}\Big(\frac{\sigma_0\sqrt{b_n}}{\hat{B}_n}-1\Big)^2I_{E_n}\Big)\nonumber\\
&\le& c_0\mathbb{E} \Big(\Big(1+\frac{\hat{B}_n^2}{\sigma_0^2 b_n}\Big)\Big(\frac{\sigma_0\sqrt{b_n}}{\hat{B}_n}-1\Big)^2I_{E_n}\Big)\le C\delta_{1,n},\qquad
\label{Taylora1}
\ee
where $c_0=\sup_{x} x^2|\phi'(x)|<\infty$.
By using (\ref{Taylor1}),  (\ref{Taylor2}),  (\ref{Taylora1}), and the fact that $\sup_{x\in \mathbb{R}} |x\phi(x)|<\infty$,  we have
\be
\sup_{x\in \mathbb{R}}\Big|\mathbb{E}\Big(\Big(\Phi\Big( \frac{\sigma_0\sqrt{b_n} x}{\hat{B}_n}\Big)-\Phi(x)\Big)I_{E_n}\Big)\Big|\le C\delta_{1,n}. \label{diffphi}
\ee
Observe that by (\ref{Taylor3}),
\bestar
\sup_{x\in \mathbb{R}}\Big|\mathbb{E}\Big(\Big(\Phi\Big( \frac{\sigma_0\sqrt{b_n} x}{\hat{B}_n}\Big)-\Phi(x)\Big)I_{E_n}^c\Big)\Big|\le \mathbb{P}(E_n^c)
\le \frac{\mathbb{E}(\hat{B}_n^2-\sigma_0^2b_n)^2}{\sigma_0^4b_n^2}
=\frac{\mathbb{E}(Z_{2}(n)-b_n)^2}{b_n^{2}}\le C\delta_{1,n}.
\eestar
 This, together with (\ref{diffphi}), proves (\ref{phihat}) and also completes
the proof of Theorem \ref{th3}.

\subsection{Proof of Theorem \ref{th4}}

 Observe that by (\ref{checksnsum}),
\bestar
\check{S}_n=\sum_{j=1}^{i(n)} \Delta(T_j(n)) X_j= \sum_{k=1}^n \check{S}_k(n),
\eestar where
\bestar
\check{S}_k(n)=\sum_{j=1}^{i(n)} \Delta(T_j(n)) X_j I(N_j(n)=k),~~~~k\ge 1,
\eestar
and $i(n)$ and $N_j(n)$ are defined in (\ref{in}) and (\ref{defNjn}), respectively.
  For each $k\ge 1$,
let $(Y_k(n))_{n\ge 1}$ be a sequence of i.i.d. copies of $\Delta(\mathbb{T}_k)X_1$, where $\mathbb{T}_k$ is a random recursive tree of size $k$ that is independent of $X_1$.
We also assume that these sequences are mutually independent and independent of $\{U_i, \varepsilon_i\}_{i\ge 2}$.
Define $S_k(0)=0$ and $S_k(n) = Y_k(1) + \cdots+ Y_k(n)$ for any $n\ge 1$. It follows  from the proof of Lemma 4.2 in \cite{Bertoin2024} that
$(\check{S}_k(n))_{k\ge 1} \stackrel{d}{=} (S_k(\nu_k(n)))_{k\ge 1},
$
where $(\nu_k(n))_{k\ge 1}$ is defined in (\ref{nuk}) and is indepent of  $(S_k(\cdot))_{k\ge 1}$.
Therefore,
\bestar
\check{S}_n \stackrel{d}{=} \sum_{k=1}^{n} S_k(\nu_k(n)).\label{checksn}
\eestar

Using the definition of $Y_k(n)$ and applying Corollary 2.3  in  \cite{Bertoin2024} shows that  $\mathbb{P}(Y_2(n)=0)=1$,
\bestar
&&\mathbb{E}(Y_1(n))=m_1, ~~\mbox{Var}(Y_1(n))=\sigma_0^2,~~\mathbb{E}|Y_1(n)|^3=\mathbb{E}(|X_1|^3),\\
&&\mathbb{E}(Y_k(n))=0, ~~\mbox{Var}(Y_k(n))=km_2/3,~~\mathbb{E}|Y_k(n)|^3\le 4k^{3/2}\mathbb{E}(|X_1|^3),~~~k\ge 3,
\eestar
where we have used the inequality
\bestar
\mathbb{E}(|\Delta(\mathbb{T}_k)|^3)\le \big(\mathbb{E}(|\Delta(\mathbb{T}_k)|^4)\big)^{3/4}\le (6k^2)^{3/4}\le 4k^{3/2}, ~~k\ge 3.
\eestar
By the classical Berry-Esseen theorem, we obtain
\be
\sup_{x\in \mathbb{R}}\Big|\mathbb{P}\Big(\check{B}_n^{-1} \Big(\sum_{k=1}^{n} S_k(\nu_k(n))-m_1\nu_1(n)\Big)\le x \Big|\mathscr{G}_n\Big)-\Phi(x)\Big|\le C \Big(\frac{\check{A}_{n}}{\check{B}_n^{3}}\wedge 1\Big),
\ee
where 
\bestar
\check{B}_n^2&=& \frac{m_2}{3}\sum_{k=3}^n k\nu_k(n)+\sigma_0^2\nu_1(n),\\
\check{A}_n&=& \sum_{k=1}^n \nu_k(n) \mathbb{E}|Y_k(n)|^{3}\le 4\mathbb{E}(|X_1|^{3}) Z_{3/2}(n),
\eestar
and $Z_{3/2}(n)$ and $\mathscr{G}_n$ are defined in Proposition \ref{lemma38} and  (\ref{sigmaG}), respectively.
Hence
\bestar
&&\sup_{x\in \mathbb{R}}\Big|\mathbb{P}\Big(\frac{\sum_{k=1}^{n} S_k(\nu_k(n))-\check{b} n}{\check{\sigma}\sqrt{n}}\le x \Big|\mathscr{G}_n\Big)-\Phi\Big( \frac{\check{\sigma} x\sqrt{n}-m_1(\nu_1(n)-\frac{np}{2-p}) }{\check{B}_n} \Big)\Big|\\
&\le& \sup_{x\in \mathbb{R}}\Big|\mathbb{P}\Big(\frac{\sum_{k=1}^{n} S_k(\nu_k(n))-m_1\nu_1(n)}{\check{B}_n}\le \frac{\check{\sigma} x\sqrt{n}-m_1(\nu_1(n)-\frac{np}{2-p}) }{\check{B}_n} \Big|\mathscr{G}_n\Big)\\
&&~~~~~~-\Phi\Big( \frac{\check{\sigma} x\sqrt{n}-m_1(\nu_1(n)-\frac{np}{2-p}) }{\check{B}_n} \Big)\Big|\le C\Big(\frac{\check{A}_n}{\check{B}_n^3}\wedge 1\Big)
\eestar
and consequently,
\bestar
\sup_{x\in \mathbb{R}}\Big|\mathbb{P}\Big(\frac{\check{S}_n-\check{b} n}{\check{\sigma} \sqrt{n}}\le x \Big)-\mathbb{E}\Big(\Phi\Big( \frac{\check{\sigma} x\sqrt{n} -m_1(\nu_1(n)-\frac{np}{2-p}) }{\check{B}_n}\Big) \Big)\Big| \le C\mathbb{E} \Big(\frac{\check{A}_n}{\check{B}_n^3}\wedge 1\Big).
\eestar
Since  $\sum_{k=1}^n k\nu_k(n)=n$ and $\sigma_0^2=m_2-m_1^2$, we have
\bestar
\check{B}_n^2=\frac{m_2n}{3}-\frac{2m_2\nu_2(n)}{3}+\Big(\frac{2m_2}{3}-m_1^2\Big)\nu_1(n).
\eestar
Define
\bestar
\sigma_{2}^2=\frac{m_2}{3}-\frac{2p(1-p)m_2}{3(2-p)(3-2p)}+\frac{p(2m_2/3-m_1^2)}{2-p}=\frac{m_2}{3-2p}-\frac{pm_1^2}{2-p}.
\eestar
Applying Proposition  \ref{lemmaadd2} gives that $\mathbb{E}(\check{B}_n^2)=\sigma_{2}^2n+O(1)$ and
\bestar
\mbox{Var}(\check{B}_n^2)&=&\mbox{Var}\Big(\Big(\frac{2m_2}{3}-m_1^2\Big)\nu_1(n)-\frac{2m_2\nu_2(n)}{3}\Big)\\
&\le& C\mbox{Var}(\nu_1(n))+C\mbox{Var}(\nu_2(n))\le Cn.
\eestar
It follows that
\be
\frac{\mathbb{E}|\check{B}_n^2-\sigma_{2}^2n|}{\sigma_{2}^2n}&=&\frac{\mathbb{E}|\check{B}_n^2-\mathbb{E}(\check{B}_n^2)|+|\mathbb{E}(\check{B}_n^2)-\sigma_{2}^2n|}{\sigma_{2}^2n}\nonumber\\
&\le& \frac{\sqrt{\mbox{Var}(\check{B}_n^2)}+|\mathbb{E}(\check{B}_n^2)-\sigma_{2}^2n|}{\sigma_{2}^2n}\le Cn^{-1/2}.  \label{diffadd1}
\ee
This, together with  Proposition \ref{lemma38}, implies that
\bestar
\mathbb{E} \Big(\frac{\check{A}_n}{\check{B}_n^3}\wedge 1\Big) &\le & C \mathbb{E} \Big(\frac{Z_{3/2}(n)}{\check{B}_n^3}\wedge 1\Big)\nonumber\\
&\le& C\Big( \mathbb{P} \Big(\check{B}_n\le \frac{1}{2} \sigma_{2}\sqrt{n}\Big)+\mathbb{E} \Big(\frac{Z_{3/2}(n)}{\check{B}_n^3}I\Big(\check{B}_n> \frac{1}{2} \sigma_{2}\sqrt{n}\Big)\Big)\Big)\nonumber\\
&\le& \frac{C\mathbb{E}|\check{B}_n^2-\sigma_{2}^2n|}{\sigma_{2}^2n}+ C \mathbb{E} \Big(\frac{Z_{3/2}(n)}{n^{3/2}}\Big)\\
&\le& Cn^{-1/2}+Cn^{-3/2}b_{3/2}(n)\le C\delta_{2,n}.
\eestar
Hence,
\be
\sup_{x\in \mathbb{R}}\Big|\mathbb{P}\Big(\frac{\check{S}_n-\check{b} n}{\check{\sigma} \sqrt{n}}\le x \Big)-\mathbb{E}\Big(\Phi\Big( \frac{\check{\sigma} x\sqrt{n} -m_1(\nu_1(n)-\frac{np}{2-p}) }{\check{B}_n}\Big) \Big)\Big| \le C\delta_{2,n}.\label{addsup}
\ee

For any $a>0, b>0$ and $x\in \mathbb{R}$, by the mean value theorem,  we have
\bestar
|\Phi(ax)-\Phi(bx)|=\frac{1}{\sqrt{2\pi}}|(a-b)x|e^{-\zeta^2/2}\le \frac{1}{\sqrt{2\pi}}|(a-b)x|e^{-(a^2\wedge b^2)x^2/2 }\le \frac{c_1|a-b|}{a\wedge b},
\eestar
where $c_1=\frac{1}{\sqrt{2\pi}} \sup\limits_{x\ge 0} xe^{-x^2/2}$ and $\zeta$ lies between $ax$ and $bx$.
Then by (\ref{diffadd1}), 
\be
&&\sup_{x\in \mathbb{R}}\Big|\mathbb{E}\Big(\Phi\Big( \frac{\check{\sigma} x\sqrt{n} -m_1(\nu_1(n)-\frac{np}{2-p}) }{\check{B}_n}\Big) \Big)-
\mathbb{E}\Big(\Phi\Big( \frac{\check{\sigma} x\sqrt{n}-m_1(\nu_1(n)-\frac{np}{2-p}) }{\sigma_{2}\sqrt{n}}\Big) \Big)\Big|\nonumber\\
&&~~~~~~~~~~~~~~~~\le \mathbb{P}(\check{B}_n^2\le (1/2)\sigma_2^2 n)+c_1\mathbb{E}\Big(\frac{|\check{B}_n-\sigma_2\sqrt{n} |}{\check{B}_n\wedge (\sigma_2\sqrt{n})}I(\hat{B}_n^2>(1/2)\sigma_2^2 n)\Big)\nonumber\\
&&~~~~~~~~~~~~~~~~\le C\frac{\mathbb{E}|\check{B}_n^2-\sigma_{2}^2n|}{\sigma_{2}^2n}\le Cn^{-1/2}. \label{diffadd2}
\ee

By Proposition \ref{lemmanu1nBE}, we have
\be
d_K\Big(\frac{\nu_1(n)-\frac{np}{2-p}}{\sqrt{n}}, ~\mathbf{Z}'\Big)\le Cn^{-1/2},  \label{BEnua}
\ee
where $\mathbf{Z}' $ is a normal random variable with mean $0$ and variance $\sigma_1^2$,
and  $\sigma_1^2$ is defined in (\ref{sigma3}).
Furthermore, we can obtain that
\be
&& \sup_{x\in \mathbb{R}}\Big|
\mathbb{E}\Big(\Phi\Big( \frac{\check{\sigma} x \sqrt{n}-m_1(\nu_1(n)-\frac{np}{2-p}) }{\sigma_{2} \sqrt{n}}\Big) \Big)-
\mathbb{E}\Big(\Phi\Big(\frac{\check{\sigma} x-m_1 \mathbf{Z}'}{\sigma_{2}}\Big)\Big)\Big|\le Cn^{-1/2}.\label{diffadd3}
\ee
The desired result, Theorem \ref{th4}, now follows from (\ref{addsup}), (\ref{diffadd2}) and (\ref{diffadd3})  since
\bestar
\mathbb{E}\Big(\Phi\Big(\frac{\check{\sigma} x-m_1 \mathbf{Z}'}{\sigma_{2}}\Big)\Big)&=&\mathbb{P}\Big(\mathbf{Z}\le \frac{\check{\sigma} x-m_1 \mathbf{Z}'}{\sigma_{2}}\Big)
=\mathbb{P}\Big(\frac{\sigma_{2}\mathbf{Z}+m_1 \mathbf{Z}'}{\check{\sigma}}\le x\Big)\nonumber\\
&=& \mathbb{P}\Big(\frac{\sqrt{\sigma_{2}^2+m_1^2 \sigma_{1}^2}}{\check{\sigma}}\mathbf{Z}\le x\Big)=\Phi(x), \label{normalcal}
\eestar
where $\mathbf{Z}$ is a standard normal random variable  and  independent of $\mathbf{Z}'$, and we have used the fact that $\sigma_{2}^2+m_1^2 \sigma_{1} ^2=\check{\sigma}^2$.

Finally, we will prove (\ref{diffadd3}).   The conclusion is obviously true when $m_1=0$.
In the following, we assume that $m_1\ne 0$. For each
 $x\in \mathbb{R}$,  we define a function $f_x$ by
$
f_{x}(t)=\frac{\check{\sigma} x-m_1 t }{\sigma_{2}}, t\in \mathbb{R}$.
Then $f_x$ is a strictly monotonic and continuous function on $\mathbb{R}$.
Note that
\bestar
\mathbb{E}(\Phi(f_x(Y)))=\frac{1}{2}-\int_{-\infty}^{0} \phi(t) \mathbb{P}(f_x(Y) \le t) dt+\int_0^{\infty}\phi(t) \mathbb{P} (f_x(Y)>t) dt
\eestar
holds for any random variable $Y$,
where $\phi(t)$ is the standard normal density function.
Let $ f_x^{-1}$ be the inverse function of $f_x$. 
If $m_1<0$, then
\bestar
\mathbb{E}(\Phi(f_x(Y)))=\frac{1}{2}-\int_{-\infty}^{0} \phi(t) \mathbb{P}(Y\le f_x^{-1}(t)) dt+\int_0^{\infty}\phi(t) \mathbb{P} (Y>f_x^{-1}(t)) dt.
\eestar
If $m_1>0$, then
\bestar
\mathbb{E}(\Phi(f_x(Y)))=\frac{1}{2}-\int_{-\infty}^{0} \phi(t) \mathbb{P}(Y>f_x^{-1}(t)) dt+\int_0^{\infty}\phi(t) \mathbb{P} (Y\le f_x^{-1}(t)) dt.
\eestar
In both cases, it follows from   (\ref{BEnua}) that
\bestar
&& \sup_{x\in \mathbb{R}}\Big|
\mathbb{E}\Big(\Phi\Big( \frac{\check{\sigma} x \sqrt{n}-m_1(\nu_1(n)-\frac{np}{2-p}) }{\sigma_{2} \sqrt{n}}\Big) \Big)-
\mathbb{E}\Big(\Phi\Big(\frac{\check{\sigma} x-m_1 \mathbf{Z}'}{\sigma_{2}}\Big)\Big)\Big|\nonumber\\
&=& \sup_{x\in \mathbb{R}}\Big|
\mathbb{E}\Big(\Phi\Big(f_x\Big(\frac{\nu_1(n)-\frac{np}{2-p}}{\sqrt{n}}\Big)\Big)\Big)-
\mathbb{E}(\Phi(f_x(\mathbf{Z}')))\Big| \le Cn^{-1/2}.
\eestar
This proves (\ref{diffadd3}) and completes the proof of Theorem \ref{th4}.

\section{Technical tools and preliminary results} \label{sect3}

\subsection{A Berry-Esseen theorem for functionals of independent random variables} \label{Sect3.1}

Let  $\xi=(\xi_1, \cdots, \xi_n)$ be a vector of independent  random variables, and
let $\xi'=(\xi_1', \cdots, \xi_n')$ be an independent copy of $\xi$.
For  any $A\subseteq \{1,2,\cdots, n\}$, we define the random vector $\xi^A$ as
\be
\xi^{A}=(\xi_1^A, \xi_2^{A},\cdots, \xi_n^{A}),    \label{XA}
\ee
where
\bestar
\xi_i^A=\left\{
\begin{array}{cc}
	\xi_i, & \mbox{if}~ i\not\in A,\\
	\xi_i', & \mbox{if}~i\in A.
\end{array}
\right.
\eestar
Suppose that $f: \mathbb{R}^n\rightarrow \mathbb{R}$ is a measurable function such that $\mathbb{E}(f^2(\xi))<\infty$. 
Set
\bestar
\sigma^2=\mbox{Var}(f(\xi))~~~~\mbox{and} ~~~~
W=\frac{f(\xi)-\mathbb{E}(f(\xi))}{\sigma}.
\eestar

\begin{thm} \label{thBE1}
	Define $\{\Delta_i, 1\le i\le n\}$ and $\{\Delta_{ij}, 1\le i,j\le n\}$ by
	\bestar
	\Delta_{ii}=\Delta_i= \mathbb{E}(f(\xi)-f(\xi^{\{i\}})|\xi_1,\cdots, \xi_i, \xi_i'), ~~1\le i\le n
	\eestar
 and
	\bestar
	\Delta_{ij}=\mathbb{E}(f(\xi)-f(\xi^{\{i\}})-f(\xi^{\{j\}})+f(\xi^{\{i,j\}})|\xi_1,\cdots, \xi_{i\vee j}, \xi_i', \xi_j'),~~1\le i\ne j \le n.
	\eestar
	Then 
	\be
	d_K(W, \mathbf{Z})\le \frac{5}{\sigma^2} \Big(\sum_{j=1}^n\mathbb{E}\Big(\sum_{i=1}^{j-1} \Delta_i\Delta_{ij}\Big)^2
	+\sum_{j=1}^n\mathbb{E}\Big(\sum_{i=j+1}^n \Delta_i\Delta_{ij}\Big)^2+\sum_{j=1}^n \sum_{i=1}^j  \mathbb{E}(\Delta_i^2\Delta_{ij}^2)\Big)^{1/2}.
	\label{mainresult}
	\ee
\end{thm}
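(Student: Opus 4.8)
The plan is to prove Theorem~\ref{thBE1} via Stein's method, specifically the exchangeable-pair/zero-bias type approach adapted to functionals of independent coordinates through the Efron--Stein type decomposition encoded by the $\Delta_i$ and $\Delta_{ij}$. First I would recall the standard Stein characterization: for the smoothed indicator test functions, $d_K(W,\mathbf{Z}) \le \sup_h |\mathbb{E}(f_h'(W) - W f_h(W))|$ where $f_h$ solves the Stein equation $f_h'(w) - w f_h(w) = h(w) - \mathbb{E}h(\mathbf{Z})$, together with the bounds $\|f_h\|_\infty \le \sqrt{\pi/2}$, $\|f_h'\|_\infty \le 2$. The key structural input is that, writing $D_i f = f(\xi) - f(\xi^{\{i\}})$, one has the telescoping identity $f(\xi) - \mathbb{E}f(\xi) = \sum_i \mathbb{E}(D_i f \mid \mathcal{F}_i')$-type martingale-like representation; more to the point, for any nice $g$, $\mathbb{E}(W g(W))$ can be rewritten using the independent copies $\xi'$ by a symmetrization/interpolation argument, producing $\mathbb{E}(W g(W)) = \tfrac{1}{\sigma^2}\sum_i \mathbb{E}\big(\Delta_i \, (g(W) - g(W^{\{i\}}_{\le i}))\big)$ up to the correct conditioning, where $W^{\{i\}}$ denotes $W$ with coordinate $i$ resampled.

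Next I would carry out the one-term Taylor expansion: $g(W) - g(W^{(i)}) = g'(W)(W - W^{(i)}) + \tfrac12 g''(\zeta_i)(W-W^{(i)})^2$, and identify $W - W^{(i)}$ in terms of $\Delta_i$ (and the higher-order corrections in terms of $\Delta_{ij}$ summed over $j$). This is where the double-difference quantities $\Delta_{ij}$ enter: the change in $W$ caused by resampling coordinate $i$, when further perturbed by resampling $j$, is controlled by $\Delta_{ij}$, so that $\sum_i \Delta_i (W - W^{(i)})$ has conditional expectation close to $1$ with an error governed by $\sum_j \mathbb{E}(\sum_{i<j}\Delta_i\Delta_{ij})^2 + \sum_j\mathbb{E}(\sum_{i>j}\Delta_i\Delta_{ij})^2$, and the remainder (second-order Taylor) term is controlled by $\sum_{i,j}\mathbb{E}(\Delta_i^2\Delta_{ij}^2)$. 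Assembling: $\mathbb{E}(f_h'(W)) - \mathbb{E}(W f_h(W)) = \mathbb{E}\big(f_h'(W)(1 - \tfrac1{\sigma^2}\sum_i \Delta_i(W-W^{(i)}))\big) - (\text{Taylor remainder})$, and bounding each piece by Cauchy--Schwarz against $\|f_h'\|_\infty$ and $\|f_h'\|_\infty$ (using that $f_h''$ can be eliminated by a further one-step approximation of $f_h'$, the usual trick to handle the non-smoothness of the Kolmogorov test functions) yields exactly the three-term bound with an absolute constant, which one then checks is at most $5$.

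The main obstacle I anticipate is \textbf{handling the non-smoothness of the Stein solution for Kolmogorov distance}: $f_h$ for $h = \mathbf{1}_{(-\infty,z]}$ has $f_h'$ only of bounded variation, not Lipschitz, so the naive second-order Taylor step is not directly available. The standard remedy is the concentration/recursive inequality of Chen--Shao type: one writes $\mathbb{E}(f_h'(W)) = \mathbb{E}(f_h'(W) K)$ for a suitable randomized kernel $K = \sum_i \Delta_i \int_0^\infty (\mathbf{1}_{\{-t \le W - W^{(i)} \le 0\}} - \cdots)\,dt$ obtained without differentiating, then estimates $\mathbb{E}\int |f_h'(W+t) - f_h'(W)| \,\hat{K}(dt)$ by replacing $f_h'(W+t) - f_h'(W)$ with $\mathbf{1}_{\{z - t \le W \le z\}}$-type indicators and bounding $\mathbb{P}(z \le W \le z + s \mid \xi_{-i})$ using the same $\Delta$, $\Delta_{ij}$ quantities --- this self-improving step is exactly what forces the appearance of $\sum_i\sum_j \mathbb{E}(\Delta_i^2\Delta_{ij}^2)$ rather than a naive fourth-moment term. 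A second, more bookkeeping-level obstacle is getting the \emph{asymmetric} index ranges ($\sum_{i=1}^{j-1}$ versus $\sum_{i=j+1}^n$, and the diagonal $\Delta_{ii}=\Delta_i$) to come out correctly; this is handled by being careful that $\Delta_i$ is conditioned on $\xi_1,\dots,\xi_i,\xi_i'$ (a prefix filtration), so that the martingale-difference orthogonality used to pass from $\mathbb{E}(\sum_i \Delta_i(W-W^{(i)}))$ to a sum of conditional-variance-type terms only kills cross terms with $i < j$ on one side and $i > j$ on the other, exactly producing the two separate sums. Once these two points are in place the rest is Cauchy--Schwarz and collecting constants.
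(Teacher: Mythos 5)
Your proposal is in the right family of methods (Stein's method driven by the first- and second-order difference operators), and you correctly flag the non-smoothness of the Kolmogorov test functions as the central obstacle. However, the specific route you sketch has two genuine gaps. First, the claimed identity $\mathbb{E}(Wg(W))=\tfrac{1}{\sigma^2}\sum_i\mathbb{E}\big(\Delta_i(g(W)-g(W^{\{i\}}))\big)$ does not hold: with the naive pair $(f(\xi),f(\xi^{\{I\}}))$ for a uniform index $I$, one gets $\tfrac1n\sum_i\mathbb{E}\big(f(\xi)-f(\xi^{\{i\}})\mid\mathscr{F}_n\big)=f(\xi)-\tfrac1n\sum_i\mathbb{E}(f(\xi)\mid\xi_j,\,j\ne i)$, which is \emph{not} a multiple of $W$, so the exact linear-regression condition fails and an uncontrolled remainder appears. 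The paper repairs this by first forming the Doob martingale increments $Y_j=\mathbb{E}(f(\xi)\mid\mathscr{F}_j)-\mathbb{E}(f(\xi)\mid\mathscr{F}_{j-1})$ and taking $D=Y_I-Y_I^{\{I\}}$ (keeping $\Delta=f(\xi)-f(\xi^{\{I\}})$ as the full difference), which gives $\mathbb{E}(D\mid\mathscr{F}_n)=W/n$ exactly; the identifications $\Delta_i=Y_i-Y_i^{\{i\}}$ and $Y_j-Y_j^{\{i\}}=\mathbb{E}(\Delta_{ij}\mid\mathscr{F}_{j-1}',\mathscr{F}_j)$ for $i<j$ are what make the stated bound come out in terms of $\Delta_i,\Delta_{ij}$. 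Without this decomposition your "up to the correct conditioning" step cannot be completed. Second, the Chen--Shao concentration-inequality remedy you propose for the indicator test functions ordinarily requires a boundedness condition on the pair difference, which is not assumed here; the paper instead uses the Shao--Zhang unbounded-exchangeable-pair device, where the terms $\int_{-\Delta}^0(I(W+t\le z)-I(W\le z))\,dt$ are dominated by $\Delta(I(W'\le z)-I(W\le z))$ and then eliminated purely by exchangeability (e.g.\ $\mathbb{E}(D^+\Delta I(W'\le z))=-\mathbb{E}(D^-\Delta I(W\le z))$), producing the extra term $\tfrac1\lambda\mathbb{E}|\mathbb{E}(|D|\Delta\mid\mathscr{F}_n)|$ rather than a concentration bound.

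A further, more structural point: you attribute both asymmetric sums to "martingale-difference orthogonality killing cross terms", but in the paper they arise from two different mechanisms. The $\sum_{i=1}^{j-1}$ term comes from bounding $\mathbb{E}|\mathbb{E}(D\tilde\Delta\mid\mathscr{F}_n)|$, where $\tilde\Delta=\sum_{j>I}(Y_j-Y_j^{\{I\}})$ is the contribution of the later martingale increments. The $\sum_{i=j+1}^{n}$ term comes from controlling $\mathbb{E}|1-\tfrac{1}{2\lambda}\mathbb{E}(D^2\mid\mathscr{F}_n)|$ via the Efron--Stein inequality applied to $\sum_i(Y_i-Y_i^{\{i\}})^2$: resampling coordinate $j$ perturbs $\Delta_i$ only for $i\ge j$ (since $\Delta_i$ is a function of $\xi_1,\dots,\xi_i,\xi_i'$), and the perturbation is exactly $\Delta_{ij}$ in distribution, which also produces the diagonal fourth-moment terms $\mathbb{E}(\Delta_j^4)=\mathbb{E}(\Delta_j^2\Delta_{jj}^2)$ in the third sum (so the bound does contain a fourth-moment term, contrary to your remark). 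These are the missing ideas you would need to supply to turn the sketch into a proof.
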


\begin{rem}
Shao and Zhang \cite{Shao2025} adopted the idea from \cite{Chatt2008} and used the  differential variables $f(\xi)-f(\xi^{\{i\}})$ and $f(\xi)-f(\xi^{\{i\}})-f(\xi^{\{j\}})+f(\xi^{\{i,j\}})$ to obtain a bound  for  $d_K(W, \mathbf{Z})$
  (see Corollary 2.5 therein). However, the related conclusion  in \cite{Shao2025} requires the assumption that $X_1, \cdots, X_n$
  are identically distributed, whereas in this paper, we need to handle the case where 
$X_1, \cdots, X_n$ are  not identically distributed.

	In this section, 
	we will use Theorem \ref{thBE1} to establish Proposition  \ref{proppercolation} and Lemma \ref{lemma41}. Specifically, throughout the  proofs, we rely only on the immediate consquence of  (\ref{mainresult}):
	\be
	d_K(W, \mathbf{Z})\le \frac{10}{\sigma^2} \Big(\sum_{j=1}^n\mathbb{E}\Big(\sum_{i\ne j} |\Delta_i||\Delta_{ij}|\Big)^2
	+\sum_{i=1}^n   \mathbb{E}(\Delta_i^4)\Big)^{1/2}.   \label{corthBE1}
	\ee
\end{rem}

\begin{proof}[Proof of Theorem \ref{thBE1}] Without loss of generality, we assume that $\sigma^2=\mbox{Var}(f(\xi))=1$.
	Let $\mathscr{F}_{0}=\sigma(\emptyset, \Omega)$ and $\mathscr{F}_{k}=\sigma(\xi_1, \cdots, \xi_k)$ for $1\le k\le n$. Similarly, define $\mathscr{F}_{k}'$ from $\xi'$ for $0\le k\le n$. We write
	\bestar
	Y_j=\mathbb{E}(f(\xi)|\mathscr{F}_j)-\mathbb{E}(f(\xi)|\mathscr{F}_{j-1}),~~~~1\le j\le n.
	\eestar
Then $\sum_{j=1}^n Y_j=f(\xi)-\mathbb{E} (f(\xi))=W$, and
for each $1\le j\le n$,  there exists a mesurable function $g_j$  on $\mathbb{R}^j$ such that
\be
Y_j=g_j(\xi_1, \cdots, \xi_j).    \label{Yjgj}
\ee
Define
	\be
	Y_j^{\{i\}}=g_j(\xi_1,\cdots, \xi_{i-1}, \xi_i',  \xi_{i+1},\cdots, \xi_{j}), ~~~~1\le i\le j\le n.  \label{Yjigji}
	\ee
	Then
	\be
	\mathbb{E}(Y_i^{\{i\}}|\mathscr{F}_n)&=&\mathbb{E}(g_i(\xi_1, \cdots, \xi_{i-1}, \xi_i')|\mathscr{F}_n)=
	\mathbb{E}(g_i(\xi_1, \cdots, \xi_{i-1}, \xi_i')|\mathscr{F}_{i-1})\nonumber\\
	&=& \mathbb{E}(g_i(\xi_1, \cdots, \xi_{i-1}, \xi_i)|\mathscr{F}_{i-1})=\mathbb{E}(Y_i|\mathscr{F}_{i-1})=0.
	\label{EYii}
	\ee
	Let $I$ be a random index chosen uniformly from the set $\{1,2,\cdots, n\}$ and independent of all others.
	Noting that $(\xi, \xi^{\{i\}})$ is an exchangeable pair for $1\le i\le n$, we conclude that $(\xi, \xi^{\{I\}})$ is also exchangeable.
	Define
	\bestar
	D= {Y_I-Y_I^{\{I\}}}   ~~~~\mbox{and}~~~~\Delta= {f(\xi)-f(\xi^{\{I\}})}.
	\eestar
	Observing that
	\bestar
	f(\xi)-\mathbb{E}(f(\xi))=\sum_{j=1}^n Y_j~~~~\mbox{and}~~~~f(\xi^{\{i\}})-\mathbb{E}(f(\xi))=\sum_{j=i}^n Y_j^{\{i\}}+\sum_{j=1}^{i-1} Y_j,
	\eestar
	we can rewrite $\Delta$ as
	$\Delta=D+\tilde{\Delta}$ with
	\bestar
	\tilde{\Delta}= \sum_{I< j\le n} (Y_j-Y_j^{\{I\}}).
	\eestar
	By letting $\lambda=1/n$, it follows from (\ref{EYii}) that
	\be 
	\mathbb{E}(D|\mathscr{F}_n)=\frac{1}{n} \sum_{i=1}^n \mathbb{E}(Y_i-Y_i^{\{i\}}|\mathscr{F}_n)=\frac{1}{n} \sum_{i=1}^n Y_i=\lambda{W}.  \label{condD}
	\ee 

By using similar arguments as in the proof of Theorem 2.2 in \cite{S2019},
we can get that
\be
	d_K(W, \mathbf{Z}) \le \mathbb{E}\Big|1-\frac{1}{2\lambda}\mathbb{E}(D\Delta|\mathscr{F}_n) \Big|+\frac{1}{\lambda} \mathbb{E}\big|\mathbb{E}(|D|\Delta| \mathscr{F}_n)\big|.  \label{dKexpre}
	\ee

For any given $z\in \mathbb{R}$, let $g:=g_z$
be the solution to the Stein equation
\be
g'(w)-wg(w)=I(w\le z)-\Phi(z).  \label{Steineq}
\ee
Define $W'=f(\xi^{\{I\}})-\mathbb{E} (f(\xi))$. By \eqref{condD},  we have
\bestar
0&=&\mathbb{E}(D(g(W)+g(W')))=2\mathbb{E}(Dg(W))-\mathbb{E}(D(g(W)-g(W')))\\
&=& 2\lambda \mathbb{E}(Wg(W))-\mathbb{E}\Big(D\int_{-\Delta}^0 g'(W+t)dt\Big),
\eestar
and consequently,
\bestar
\mathbb{E}(Wg(W))=\frac{1}{2\lambda}\mathbb{E}\Big(D\int_{-\Delta}^0 g'(W+t)dt\Big).
\eestar
Therefore,
\be 
&&\mathbb{P}(W\le z)-\Phi(z)=
\mathbb{E}(g'(W)-Wg(W))=I_0-I_1,  \label{BE01}
\ee 
where 
\bestar
I_0&=&\mathbb{E} \Big(g'(W) \Big(1-\frac{1}{2\lambda} \mathbb{E}(D\Delta|\mathscr{F}_n)\Big)\Big),\\
I_1&=&\frac{1}{2\lambda}\mathbb{E}\Big(D\int_{-\Delta}^0 (g'(W+t)-g'(W))dt\Big).
\eestar
By using the Stein equation \eqref{Steineq}, we have
\bestar
I_1&=&\frac{1}{2\lambda}\mathbb{E}\Big(D\int_{-\Delta}^0 ((W+t)g(W+t)-Wg(W))dt\Big)\\
&&~~~~+\frac{1}{2\lambda}\mathbb{E}\Big(D\int_{-\Delta}^0 (I(W+t\le z)-I(W\le z))dt\Big).
\eestar
Similar arguments as in  the proof of Theorem 2.2 in \cite{S2019} yield that
\bestar
&&0\ge \int_{-\Delta}^0 ((W+t)g(W+t)-Wg(W))dt \ge -\Delta (Wg(W)- W'g(W')),\\
&&0\le \int_{-\Delta}^0 (I(W+t\le z)-I(W\le z))dt \le \Delta (I(W'\le z)-I(W\le z)),
\eestar
and
\bestar
|I_0|&\le& \mathbb{E} \Big|1-\frac{1}{2\lambda} \mathbb{E}(D\Delta|\mathscr{F}_n)\Big|,\\
I_1&\le& \frac{1}{2\lambda} \mathbb{E}(D^-\Delta (Wg(W)-W'g(W')))+\frac{1}{2\lambda} \mathbb{E}(D^+\Delta (I(W'\le z)-I(W\le z)))\\
&=&\frac{1}{2\lambda} \mathbb{E}(|D|\Delta Wg(W))-\frac{1}{2\lambda} \mathbb{E}(|D|\Delta I(W\le z))\le \frac{1}{\lambda} \mathbb{E}|\mathbb{E}(|D|\Delta|\mathscr{F}_n)|,
\eestar
where $D^+=\max\{D, 0\}, D^-=\max\{-D, 0\}$, and we have used the facts that
$ \mathbb{E}(D^-\Delta W'g(W'))=-\mathbb{E}(D^+\Delta W g(W))$
and
$\mathbb{E}(D^+\Delta (I(W'\le z))=- \mathbb{E}(D^-\Delta (I(W\le z))$.
Hence, by \eqref{BE01}, we have
\bestar
\mathbb{P}(W\le z)-\Phi(z)  \ge  -\mathbb{E}\Big|1-\frac{1}{2\lambda}\mathbb{E}(D\Delta|\mathscr{F}_n) \Big|-\frac{1}{\lambda} \mathbb{E}\big|\mathbb{E}(|D|\Delta| \mathscr{F}_n)\big|.
\eestar
Similarly, we can also get that
\bestar
\mathbb{P}(W\le z)-\Phi(z)  \le  \mathbb{E}\Big|1-\frac{1}{2\lambda}\mathbb{E}(D\Delta|\mathscr{F}_n) \Big|+\frac{1}{\lambda} \mathbb{E}\big|\mathbb{E}(|D|\Delta|\mathscr{F}_n)\big|.
\eestar
Hence \eqref{dKexpre} follows immediately.

	Since $(Y_i, Y_i^{\{i\}})$ is an exchangeable pair and $\mathbb{E}(Y_i^{\{i\}}|\mathscr{F}_{n})=0$, we have $\mathbb{E}(|Y_i-Y_i^{\{i\}}|(Y_i-Y_i^{\{i\}}))=0$
	and
	\be
	\mathbb{E}(Y_i-Y_i^{\{i\}})^2=2\mathbb{E}(Y_i^2)-2\mathbb{E} (Y_i \mathbb{E}(Y_i^{\{i\}}|\mathscr{F}_{n}))=2\mathbb{E}(Y_i^2).   \label{EYi2}
	\ee
	For $i<j$,   recalling that  $\xi_i'$ is independent of  $\sigma\{Y_j, \mathscr{F}_{j-1}\}$,
we can apply the properties of conditional independence (see, for instance, Chapter 9 of \cite{Chung2001}) to obtain that  $\xi_i'$ and  $Y_j$ are conditionally independent,
given $\mathscr{F}_{j-1}$, 
and consequently,
\be 
\mathbb{E}(Y_j|\xi_i', \mathscr{F}_{j-1})=\mathbb{E}(Y_j| \mathscr{F}_{j-1})=0.
\label{condind}
\ee Similarly $\mathbb{E}(Y_j^{\{i\}}|\xi_i', \mathscr{F}_{j-1})=0$.  Therefore for $i<j$,
	\be
	\mathbb{E}((Y_i-Y_i^{\{i\}})(Y_j-Y_j^{\{i\}}))&=&\mathbb{E}\Big((Y_i-Y_i^{\{i\}})\mathbb{E}(Y_j-Y_j^{\{i\}}|\xi_i',\mathscr{F}_{j-1})\Big)=0, \label{syma1}\\
	\mathbb{E}(|Y_i-Y_i^{\{i\}}|(Y_j-Y_j^{\{i\}}))&=&\mathbb{E}\Big(|Y_i-Y_i^{\{i\}}|\mathbb{E}(Y_j-Y_j^{\{i\}}|\xi_i',\mathscr{F}_{j-1})\Big)=0.   \label{syma2}
	\ee
	 Repeating the argument above
 gives  that for any  $i<j, i'<j'$ with $j\ne j'$, 
	\be
	\mbox{Cov}((Y_i-Y_i^{\{i\}})(Y_j-Y_j^{\{i\}}), ~(Y_{i'}-Y_{i'}^{\{i'\}})(Y_{j'}-Y_{j'}^{\{i'\}}))=0, \label{symadd2}
	\ee
	and for $(i, j) \ne (i', j')$ with $i\le j$ and $i'\le j'$,
	\be
	\mbox{Cov}(|Y_i-Y_i^{\{i\}}|(Y_j-Y_j^{\{i\}}),~|Y_{i'}-Y_{i'}^{\{i'\}}|(Y_{j'}-Y_{j'}^{\{i'\}}))=0. \label{sym}
	\ee
	
	Note that by (\ref{syma1}), 
	\bestar
	\mathbb{E}(D\tilde{\Delta})=\frac{1}{n}\sum_{j=1}^n \sum_{i=1}^{j-1} \mathbb{E}((Y_i-Y_i^{\{i\}})(Y_j-Y_j^{\{i\}}))=0.
	\eestar
	 Applying (\ref{symadd2}) gives
	\be
	\frac{1}{\lambda}\mathbb{E}|\mathbb{E}(D\tilde{\Delta}|\mathscr{F}_n)|
	&\le&\frac{1}{\lambda}(\mathbb{E}(\mathbb{E}(D\tilde{\Delta}|\mathscr{F}_n))^2)^{1/2}
	=\frac{1}{\lambda}\big(\mathbb{E}\big(\mathbb{E}(\mathbb{E}(D\tilde{\Delta}|\mathscr{F}_n, \mathscr{F}_n')|\mathscr{F}_n)\big)^2\big)^{1/2}
	\nonumber\\
	&\le& \frac{1}{\lambda}\big(\mathbb{E}\big(\mathbb{E}(\mathbb{E}^2(D\tilde{\Delta}|
	\mathscr{F}_n, \mathscr{F}_n')|\mathscr{F}_n)\big)\big)^{1/2}\nonumber\\
	&=&\frac{1}{\lambda}\big(\mathbb{E}\big(\mathbb{E}^2(D\tilde{\Delta}|
	\mathscr{F}_n, \mathscr{F}_n')\big)\big)^{1/2}
	=\frac{1}{\lambda}\big(\mbox{Var}\big(\mathbb{E}(D\tilde{\Delta}|\mathscr{F}_n, \mathscr{F}_n')\big)\big)^{1/2}\nonumber\\
	&=& \Big(\mbox{Var}\Big(\sum_{j=1}^n \sum_{i=1}^{j-1}  (Y_i-Y_i^{\{i\}})(Y_j-Y_j^{\{i\}})\Big)\Big)^{1/2}\nonumber\\
	&=&\Big(\sum_{j=1}^n \mbox{Var}\Big(\sum_{i=1}^{j-1}  (Y_i-Y_i^{\{i\}})(Y_j-Y_j^{\{i\}})\Big)\Big)^{1/2}. \label{EDDelta}
	\ee
	A similar argument  as in the proof of (\ref{condind}) shows  that for $i<j$,
	\bestar
	Y_j&=&\mathbb{E}(f(\xi)-f(\xi^{\{j\}})|\mathscr{F}_j)=\mathbb{E}(f(\xi)-f(\xi^{\{j\}})|\xi_i', \mathscr{F}_j),\\
	Y_j^{\{i\}}&=&\mathbb{E}(f(\xi^{\{i\}})-f(\xi^{\{i,j\}})|\mathscr{F}_j^{\{i\}})\\
&=&\mathbb{E}(f(\xi^{\{i\}})-f(\xi^{\{i,j\}})|\xi_i, \mathscr{F}_j^{\{i\}})
=\mathbb{E}(f(\xi^{\{i\}})-f(\xi^{\{i,j\}})|\xi_i', \mathscr{F}_j),
	\eestar
	where $\mathscr{F}_j^{\{i\}}=\sigma(\xi_1,\cdots, \xi_{i-1}, \xi_i',  \xi_{i+1},\cdots, \xi_{j})$.
	Then  for $i<j$,
	\bestar
	Y_j-Y_j^{\{i\}}&=&\mathbb{E}(f(\xi)-f(\xi^{\{j\}})-f(\xi^{\{i\}})+f(\xi^{\{i,j\}})|\xi_i',\mathscr{F}_j)\\
	&=&\mathbb{E}(\Delta_{ij}|\xi_i',\mathscr{F}_j)=\mathbb{E}(\Delta_{ij}|\mathscr{F}_{j-1}',\mathscr{F}_j).
	\eestar
	Similarly,
	\be
	Y_i-Y_i^{\{i\}}=\mathbb{E}(f(\xi)|\mathscr{F}_i)-\mathbb{E}(f(\xi^{\{i\}})|\mathscr{F}_i^{\{i\}})
	=\mathbb{E}(f(\xi)-f(\xi^{\{i\}})|\xi_i',\mathscr{F}_i)=\Delta_i.
\label{Deltai}
	\ee
	Hence by (\ref{syma1}), 
	\bestar
	\mbox{Var}\Big(\sum_{i=1}^{j-1}  (Y_i-Y_i^{\{i\}})(Y_j-Y_j^{\{i\}})\Big)
	&=&\mathbb{E}\Big(\sum_{i=1}^{j-1}  (Y_i-Y_i^{\{i\}})(Y_j-Y_j^{\{i\}})\Big)^2\nonumber\\
	&=& \mathbb{E}\Big(\mathbb{E}\Big(\sum_{i=1}^{j-1}\Delta_i\Delta_{ij}\Big|\mathscr{F}_{j-1}',\mathscr{F}_j\Big)\Big)^2\nonumber\\
	&\le& \mathbb{E}\Big(\sum_{i=1}^{j-1}\Delta_i\Delta_{ij}\Big)^2.
	\eestar
	This, together with (\ref{EDDelta}),  implies that
	\be
	\frac{1}{\lambda}\mathbb{E}|\mathbb{E}(D\tilde{\Delta}|\mathscr{F}_n)|\le \Big(\sum_{j=1}^n \mathbb{E}\Big(\sum_{i=1}^{j-1}\Delta_i\Delta_{ij}\Big)^2\Big)^{1/2}.
	\label{DDelta1}
	\ee
	Similarly, by  (\ref{sym}), we have
	\be
	\frac{1}{\lambda}\mathbb{E}|\mathbb{E}(|D|\tilde{\Delta}|\mathscr{F}_n)|&\le&
	\Big(\sum_{j=1}^n\sum_{i=1}^{j-1}  \mathbb{E}\Big( |Y_i-Y_i^{\{i\}}|(Y_j-Y_j^{\{i\}})\Big)^2\Big)^{1/2}\nonumber\\
	&\le&\Big(\sum_{j=1}^n\sum_{i=1}^{j-1} \mathbb{E}\Big(\Delta_i\Delta_{ij}\Big)^2\Big)^{1/2},
	\ee
	and
	\be
	\frac{1}{\lambda}\mathbb{E}|\mathbb{E}(|D|D|\mathscr{F}_n)|\le  
	\Big(\sum_{j=1}^n \mathbb{E}(\Delta_j^4)\Big)^{1/2}.
	\ee
In order to prove  Theorem \ref{thBE1}, it suffices to show that	
	\be
	\mathbb{E}\Big|1-\frac{1}{2\lambda}\mathbb{E}(D^2|\mathscr{F}_n)\Big|\le  \Big(\sum_{j=1}^n \mathbb{E}\Big(\sum_{i=j+1}^{n}\Delta_i\Delta_{ij}\Big)^2
	+4\sum_{j=1}^n \mathbb{E}(\Delta_j^4)\Big)^{1/2}.
	\label{DDelta4}
	\ee
Indeed,	by applying \eqref{dKexpre} and (\ref{DDelta1})-(\ref{DDelta4}), we have
	\bestar
	d_K(W, \mathbf{Z})&\le& \mathbb{E}\Big|1-\frac{1}{2\lambda}\mathbb{E}(D^2|\mathscr{F}_n)\Big|+\frac{1}{2\lambda}\mathbb{E}|\mathbb{E}(D\tilde{\Delta}|\mathscr{F}_n)|+\frac{1}{\lambda}\mathbb{E}|\mathbb{E}(|D|D|\mathscr{F}_n)|+\frac{1}{\lambda}\mathbb{E}|\mathbb{E}(|D|\tilde{\Delta}|\mathscr{F}_n)|\\
	&\le&5 \Big(\sum_{j=1}^n\mathbb{E}\Big(\sum_{i=1}^{j-1} \Delta_i\Delta_{ij}\Big)^2
	+\sum_{j=1}^n\mathbb{E}\Big(\sum_{i=j+1}^n \Delta_i\Delta_{ij}\Big)^2+\sum_{j=1}^n \sum_{i=1}^j  \mathbb{E}(\Delta_i^2\Delta_{ij}^2)\Big)^{1/2}.
	\eestar
This proves Theorem \ref{thBE1}.

We now prove (\ref{DDelta4}).
	Since $\{Y_i, 1\le i\le n\}$ is a martingale differece sequence, it follows from
(\ref{EYi2}) that
\bestar
	\mathbb{E}(D^2)=\frac{1}{n} \sum_{i=1}^n \mathbb{E}((Y_i-Y_i^{\{i\}})^2)=\frac{2}{n} \sum_{i=1}^n \mathbb{E}(Y_i^2)=\frac{2}{n}\mbox{Var} (f(\xi))=2\lambda.
	\eestar
Hence, by using a similar argument as in (\ref{EDDelta}), we have
	\be
	\mathbb{E}\Big|1-\frac{1}{2\lambda}\mathbb{E}(D^2|\mathscr{F}_n)\Big|&\le& \frac{1}{2\lambda}(\mbox{Var}(\mathbb{E}(D^2|\mathscr{F}_n)))^{1/2}
	\le \frac{1}{2\lambda}(\mbox{Var}(\mathbb{E}(D^2|\mathscr{F}_n, \mathscr{F}_n')))^{1/2}\nonumber\\
	&\le& \frac{1}{2}\Big(\mbox{Var}\Big(\sum_{i=1}^n   (Y_i-Y_i^{\{i\}})^2\Big)\Big)^{1/2}.  \label{1-d21}
	\ee	
Note that by (\ref{Yjgj}), (\ref{Yjigji}) and (\ref{Deltai}),
 \be
\Delta_i=Y_i-Y_i^{\{i\}}=g_i(\xi_1, \cdots, \xi_i)-g_i(\xi_1,\cdots, \xi_{i-1}, \xi_i')
:=\tilde{g}_i(\xi_1, \cdots, \xi_i, \xi_i'),  \label{tildegi}
\ee where $\tilde{g}_i: \mathbb{R}^{i+1}\rightarrow \mathbb{R}$ is a measurable function.
Let $(\xi_1^*, \xi_1'^*, \cdots, \xi_n^*, \xi_n'^*)$ be an independent copy of $(\xi_1,\xi_1',\cdots, \xi_n, \xi_n')$, and define
	\bestar
	V_{ij}^*=\left\{
	\begin{array}{ll}
		\tilde{g}_i(\xi_1,  \cdots, \xi_{j-1}, \xi_j^*, \xi_{j+1},\cdots, \xi_i, \xi_i'), & j<i,\\
		\tilde{g}_i(\xi_1, \cdots, \xi_{j-1}, \xi_j^*, \xi_j'^{*}), & j=i.
	\end{array}
	\right.
	\eestar
	By the Efron-Stein inequality (see  \cite{ES1981}), we have
	\be
	\mbox{Var}\Big(\sum_{i=1}^n   (Y_i-Y_i^{\{i\}})^2\Big)&\le&\frac{1}{2}\sum_{j=1}^n \mathbb{E}\Big(\sum_{i=j}^n (\Delta_i^2-(V_{ij}^*)^2) \Big)^2\nonumber\\
	&=& \frac{1}{2}\sum_{j=1}^n \mathbb{E}\Big(\sum_{i=j}^n (\Delta_i-V_{ij}^*)(\Delta_i+V_{ij}^*) \Big)^2\nonumber\\
	&\le& \sum_{j=1}^n \mathbb{E}\Big(\sum_{i=j}^n (\Delta_i-V_{ij}^*)\Delta_i \Big)^2+\sum_{j=1}^n \mathbb{E}\Big(\sum_{i=j}^n (\Delta_i-V_{ij}^*)V_{ij}^* \Big)^2\nonumber\\
	&=& 2\sum_{j=1}^n \mathbb{E}\Big(\sum_{i=j}^n (\Delta_i-V_{ij}^*)\Delta_i \Big)^2.   \label{1-d22}
	\ee
	Observe that for fixed $j$, we have $V_{jj}^*\stackrel{d}{=}\Delta_j$ and
	\bestar
	(\Delta_i, V_{ij}^*, i=j+1,\cdots, n)\stackrel{d}{=} (\Delta_i, V_{ij}, i=j+1,\cdots, n),
	\eestar
	where (by (\ref{Deltai}) and (\ref{tildegi}))
	\bestar
	V_{ij}&=&\tilde{g}_i(\xi_1,  \cdots, \xi_j',\cdots, \xi_i, \xi_i')=\mathbb{E}(f(\xi^{\{j\}})-f(\xi^{\{i,j\}})|\xi_i', \mathscr{F}_i^{\{j\}})\\
	&=&\mathbb{E}(f(\xi^{\{j\}})-f(\xi^{\{i,j\}})|\xi_i', \xi_j', \mathscr{F}_i), ~~~~i>j,
	\eestar
and  $\mathscr{F}_i^{\{j\}}=\sigma(\xi_1,\cdots, \xi_{i-1}, \xi_j',  \xi_{i+1},\cdots, \xi_{i})$.
	Since for $i>j$, $$\Delta_i=\mathbb{E}(f(\xi)-f(\xi^{\{i\}})|\xi_i',\mathscr{F}_i)=\mathbb{E}(f(\xi)-f(\xi^{\{i\}})|\xi_i',\xi_j', \mathscr{F}_i),$$ we have
	\bestar
	\Delta_i-V_{ij}=\mathbb{E}(f(\xi)-f(\xi^{\{i\}})-f(\xi^{\{j\}})+f(\xi^{\{i,j\}})|\xi_i',\xi_j', \mathscr{F}_i)=\Delta_{ij}, ~~~~i>j.
	\eestar
Hence,
	\bestar
	\mathbb{E}\Big(\sum_{i=j}^n (\Delta_i-V_{ij}^*)\Delta_i \Big)^2
	&\le& 2\mathbb{E}\Big(\sum_{i=j+1}^n (\Delta_i-V_{ij}^*)\Delta_i \Big)^2+2\mathbb{E}( (\Delta_j-V_{jj}^*)^2\Delta_j^2 )\\
	&\le & 2\mathbb{E}\Big(\sum_{i=j+1}^n (\Delta_i-V_{ij})\Delta_i \Big)^2+4\mathbb{E}(\Delta_j^4)+4\mathbb{E}( (V_{jj}^*)^2\Delta_j^2 )\\
	&\le& 2\mathbb{E}\Big(\sum_{i=j+1}^{n}\Delta_i\Delta_{ij}\Big)^2+8\mathbb{E}(\Delta_j^4).
	\eestar
This, together with (\ref{1-d21}) and (\ref{1-d22}),  implies (\ref{DDelta4})
and completes  the proof of Theorem \ref{thBE1}.
\end{proof}

\subsection{Percolation on general graphs}

Let $G_n$ be
 a graph with vertex set $V_n=\{1,2,\cdots, n\}$ and edge set  $E_n=\{e_1, \cdots, e_{m}\}$. Denote by $d_{n,i}$  the  degree of vertex $i$ in $G_n$.
Consider Bernoulli bond percolation on $G_n$ with parameter $\widetilde{p} \in (0, 1)$, where each edge of $G_n$
is independently open with probability $\widetilde{p}$. 
Let $N_{n,d}$ denote the number of vertices with  degree $d\ge 0$ in  the percolated subgraph.
Applying Theorem \ref{thBE1} yields the following Berry-Esseen bound for $N_{n,d}$, which is of
independent interest and will play an important role in the proof Proposition \ref{lemmanu1nBE}.

\begin{prop}  \label{proppercolation} Suppose  that $\mu_{n,d}=\mathbb{E}(N_{n,d})$ and $\sigma_{n,d}^2=\mbox{Var}(N_{n,d})>0$. Then we have
\bestar
d_K \Big(\frac{N_{n,d}-\mu_{n,d}}{\sigma_{n,d}}, ~\mathbf{Z}\Big) \le \frac{C}{\sigma_{n,d}^2} \Big(m\widetilde{p}\,(1-\widetilde{p}\,)+ \widetilde{p}\,^3(1-\widetilde{p}\,)^3 \,\sum_{i=1}^n d_{n,i}^3\Big)^{1/2},  \label{BEpercola}
\eestar
where $C>0$ is an absolute constant.
\end{prop}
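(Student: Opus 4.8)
The plan is to apply Theorem \ref{thBE1}, in the form of its corollary \eqref{corthBE1}, taking the underlying independent variables to be the edge indicators. Write $\xi_k=\mathbb{1}[e_k\text{ is open}]$ for $1\le k\le m$, so that $\xi=(\xi_1,\dots,\xi_m)$ is a vector of i.i.d.\ Bernoulli$(\widetilde p)$ random variables (thus the role of ``$n$'' in Theorem \ref{thBE1} is played here by the edge count $m$), and set $f(\xi)=N_{n,d}$. Then $W=(N_{n,d}-\mu_{n,d})/\sigma_{n,d}$, and \eqref{corthBE1} reduces the problem to bounding $\sum_{l=1}^m\mathbb{E}\big(\sum_{k\ne l}|\Delta_k||\Delta_{kl}|\big)^2$ and $\sum_{k=1}^m\mathbb{E}(\Delta_k^4)$, where $\Delta_k$ and $\Delta_{kl}$ are the first- and second-order influence functionals of Theorem \ref{thBE1}. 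The core of the argument will be two pointwise estimates on these influences that exploit the locality of the functional $N_{n,d}$.

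First I would record two structural facts. Writing $D_i$ for the degree of vertex $i$ in the percolated subgraph, we have $N_{n,d}=\sum_{i=1}^n\mathbb{1}[D_i=d]$; flipping the status of a single edge $e_k=\{u,v\}$ alters only $D_u$ and $D_v$, each by $1$, so $|f(\xi)-f(\xi^{\{k\}})|\le 2$, and this difference vanishes whenever $\xi_k=\xi_k'$. Hence $|\Delta_k|\le 2\,\mathbb{1}[\xi_k\ne\xi_k']$. Second, the mixed second difference $f(\xi)-f(\xi^{\{k\}})-f(\xi^{\{l\}})+f(\xi^{\{k,l\}})$ decomposes as a sum over $i$ of the interaction of $\xi_k$ and $\xi_l$ inside $\mathbb{1}[D_i=d]$, and $\mathbb{1}[D_i=d]$ can depend on both $\xi_k$ and $\xi_l$ only when $i$ is a common endpoint of $e_k$ and $e_l$; therefore this second difference is $0$ unless $e_k$ and $e_l$ share a vertex, is $0$ unless $\xi_k\ne\xi_k'$ and $\xi_l\ne\xi_l'$, and is bounded by $2$ otherwise. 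Since all three indicator events are measurable with respect to the conditioning $\sigma$-field appearing in the definition of $\Delta_{kl}$, this gives $|\Delta_{kl}|\le 2\,\mathbb{1}[e_k\sim e_l]\,\mathbb{1}[\xi_k\ne\xi_k']\,\mathbb{1}[\xi_l\ne\xi_l']$, where $e_k\sim e_l$ means that $e_k$ and $e_l$ are adjacent.

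Next I would feed these bounds into \eqref{corthBE1}. Put $q=\mathbb{P}(\xi_k\ne\xi_k')=2\widetilde p(1-\widetilde p)$ and, for each $l$, let $a_l=\#\{k\ne l:e_k\sim e_l\}$. From the first fact, $\sum_{k=1}^m\mathbb{E}(\Delta_k^4)\le 16\,m\,q$. From both facts, for fixed $l$ one has $\sum_{k\ne l}|\Delta_k||\Delta_{kl}|\le 4\,\mathbb{1}[\xi_l\ne\xi_l']\sum_{k:e_k\sim e_l}\mathbb{1}[\xi_k\ne\xi_k']$, and squaring and using independence of the events $\{\xi_k\ne\xi_k'\}$ yields $\mathbb{E}\big(\sum_{k\ne l}|\Delta_k||\Delta_{kl}|\big)^2\le 16\big(a_lq^2+a_l^2q^3\big)$. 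Summing over $l$ and using $a_l\le d_{n,u}+d_{n,v}$ for $e_l=\{u,v\}$, which gives $\sum_l a_l\le\sum_i d_{n,i}^2$ and $\sum_l a_l^2\le 2\sum_i d_{n,i}^3$, we get $\sum_l\mathbb{E}\big(\sum_{k\ne l}|\Delta_k||\Delta_{kl}|\big)^2\le 16\big(q^2\sum_i d_{n,i}^2+2q^3\sum_i d_{n,i}^3\big)$. The only contribution not already of the claimed form is $q^2\sum_i d_{n,i}^2$; to handle it I would use Cauchy--Schwarz, $\sum_i d_{n,i}^2\le\big((\sum_i d_{n,i})(\sum_i d_{n,i}^3)\big)^{1/2}=\big(2m\sum_i d_{n,i}^3\big)^{1/2}$, followed by the AM--GM inequality (splitting $2m\sum_i d_{n,i}^3=(2m/q)(q\sum_i d_{n,i}^3)$), to obtain $q^2\sum_i d_{n,i}^2\le qm+\tfrac12 q^3\sum_i d_{n,i}^3$. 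Collecting the pieces and recalling $q\asymp\widetilde p(1-\widetilde p)$, \eqref{corthBE1} then yields the asserted bound with an absolute constant $C$.

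I expect the main obstacle to be the second structural fact, namely the pointwise control of the second-order influence $\Delta_{kl}$: one must verify carefully that the mixed difference is supported on the sparse event that $e_k$ and $e_l$ are adjacent and on the event $\{\xi_k\ne\xi_k',\,\xi_l\ne\xi_l'\}$, and that both localizations are preserved under the conditioning in the definition of $\Delta_{kl}$, so that the indicators can be extracted from the conditional expectation. Once that is in hand, the remainder is routine bookkeeping with the degree sums $\sum_i d_{n,i}^2$ and $\sum_i d_{n,i}^3$, together with the Cauchy--Schwarz/AM--GM step that absorbs the $q^2\sum_i d_{n,i}^2$ term into the two terms appearing in the statement.
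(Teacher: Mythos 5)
Your proposal is correct and follows essentially the same route as the paper: both apply the corollary \eqref{corthBE1} of Theorem \ref{thBE1} to the edge indicators, establish the pointwise bounds $|\Delta_k|\le C|\xi_k-\xi_k'|$ and $|\Delta_{kl}|\le C|\xi_k-\xi_k'||\xi_l-\xi_l'|I(e_k\sim e_l)$, reduce the resulting sums to $\sum_i d_{n,i}^2$ and $\sum_i d_{n,i}^3$, and absorb the $\widetilde p^{\,2}(1-\widetilde p)^2\sum_i d_{n,i}^2$ term via the same Cauchy--Schwarz/AM--GM step. The only (cosmetic) difference is that you derive the influence bounds by direct locality of the degree indicators, whereas the paper writes $I(\widetilde D_{n,i}=d)$ via a Fourier integral before differencing; your "main obstacle" (extracting the indicators from the conditional expectation) is indeed handled exactly as you describe, since $\xi_k,\xi_l,\xi_k',\xi_l'$ all lie in the conditioning $\sigma$-field.
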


\begin{rem}Let  $K_n$ denote the complete graph   with vertex set $V_n=\{1,2,\cdots, n\}$.   Bernoulli bond percolation on $K_n$ with parameter $\widetilde{p}$
generates the well-known  Erd\H{o}s-R\'{e}nyi graph $G(n,\widetilde{p})$.
Goldstein \cite{Gold2013} and Krokowski et al. \cite{K2017} derived the related Berry-Esseen bounds in this case.
\end{rem}

\begin{proof}[Proof of Proposition \ref{proppercolation}]
Let $\widetilde{D}_{n,i}$ denote the degree of vertex $i$ in the  percolated subgraph.
Define
\bestar
\xi_{j}=I(e_j\mbox{~is kept in the  percolated subgraph}),~~~~j=1,2,\cdots, m.
\eestar
Then $\xi_1, \xi_2, \cdots, \xi_{m}$ are i.i.d.  with
$
\mathbb{P}(\xi_{j}=1)=1-\mathbb{P}(\xi_{j}=0)=\widetilde{p}\,
$,
and
$
\widetilde{D}_{n,i}=\sum_{j\in A_i} \xi_{j}
$, where
\bestar
A_i=\Big\{j\in \{1,2,\cdots, m\}: ~i \mbox{~is one of the endpoints of}~ e_j\Big\}.
\eestar
Let $\xi=(\xi_1,\cdots, \xi_m),~\textbf{i}=\sqrt{-1}, $ and let $f_d: \{0,1\}^{m} \rightarrow \mathbb{R}$ be a measurable function such that
\bestar
f_d(x)=\frac{1}{2\pi}\sum_{i=1}^n\int_{-\pi}^{\pi} e^{-\textbf{i}td} \prod_{j\in A_i} e^{\textbf{i}tx_{j}}dt,~~~~x=(x_1,\cdots, x_m) \in \{0,1\}^{m}.
\eestar
Then 
\be
N_{n,d}&=&\sum_{i=1}^n I(\widetilde{D}_{n,i}=d)=\frac{1}{2\pi}\sum_{i=1}^n\int_{-\pi}^{\pi}  e^{\textbf{i}t(\widetilde{D}_{n,i}-d)}dt
\nonumber\\
&=&\frac{1}{2\pi}\sum_{i=1}^n\int_{-\pi}^{\pi} e^{-\textbf{i}td} \prod_{j\in A_i}  e^{\textbf{i}t\xi_{j}}dt
=f_d(\xi),  \label{nnd}
\ee
where we have used the fact that for any $k\in \mathbb{Z}$,
\bestar
\frac{1}{2\pi}\int_{-\pi}^{\pi}  e^{\textbf{i}tk}dt=\left\{
\begin{array}{cc}
1, & \mbox{if}~ k=0,\\
0, & \mbox{if}~ k\ne 0.
\end{array}
\right.
\eestar
Let  $\xi'=(\xi_1', \cdots, \xi_m')$ be an independent copy of $\xi$ and define $\xi^{A}$ as in (\ref{XA}) for any $A\subseteq \{1,2, \cdots, m\}$.
Observe that for any $1\le i\le m$,
\bestar
f_d(\xi)-f_d(\xi^{\{i\}})=\frac{1}{2\pi}\int_{-\pi}^{\pi} e^{-\textbf{i}td}(e^{\textbf{i}t\xi_i}-e^{\textbf{i}t\xi_i'}) \Big( \prod_{j\in A_{e_i^+},~j\ne i} e^{\textbf{i}t\xi_{j}}
+ \prod_{j\in A_{e_i^-},~j\ne i} e^{\textbf{i}t\xi_{j}}\Big)dt,
\eestar
where $e_i^+, e_i^-\in V_n$ are the two  endpoints of $e_i$.
Since $\xi_i, \xi_i'\in \{0,1\}$, we have
\bestar
|f_d(\xi)-f_d(\xi^{\{i\}})|\le \frac{1}{\pi}\int_{-\pi}^{\pi} |e^{\textbf{i}t\xi_i}-e^{\textbf{i}t\xi_i'}|dt=\frac{1}{\pi}\int_{-\pi}^{\pi} |(e^{\textbf{i}t}-1)(\xi_i-\xi_i')|dt\le 3|\xi_i-\xi_i'|.
\eestar
Moreover, if $e_i$ and $e_j$  do not share a common endpoint, then $$f_d(\xi)-f_d(\xi^{\{i\}})-f_d(\xi^{\{j\}})+f_d(\xi^{\{i,j\}})=0.$$
If $i\ne j$ and $e_i$ and $e_j$  share a common endpoint (denoted as $e_i\sim e_j$), then
\bestar
|f_d(\xi)-f_d(\xi^{\{i\}})-f_d(\xi^{\{j\}})+f_d(\xi^{\{i,j\}})|&\le &\frac{1}{2\pi}\int_{-\pi}^{\pi} |(e^{\textbf{i}t\xi_i}-e^{\textbf{i}t\xi'_i})
 (e^{\textbf{i}t\xi_j}-e^{\textbf{i}t\xi_j'})|dt\\
&=&\frac{1}{2\pi}\int_{-\pi}^{\pi}|(e^{\textbf{i}t}-1)^2 (\xi_i-\xi'_i)
 (\xi_j-\xi'_j)|dt\\
 &=& 2|\xi_i-\xi'_i||\xi_j-\xi'_j|.
\eestar
Define $\Delta_i$ and $\Delta_{i,j}$ similarly as in Theorem \ref{thBE1}. Then $|\Delta_i|\le 3|\xi_i-\xi_i'|$ and $|\Delta_{i,j}|\le 2|\xi_i-\xi'_i||\xi_j-\xi'_j|I(e_i\sim e_j)$ for $i\ne j$.
Hence,
\be
\sum_{i=1}^{m}\mathbb{E}(\Delta_i^4)\le 81\sum_{i=1}^{m}\mathbb{E}((\xi_i-\xi'_i)^4)=81\sum_{i=1}^{m}\mathbb{E}(|\xi_i-\xi'_i|)= 162\, m \widetilde{p}\,
(1-\widetilde{p}\,),  \label{Deltai1}
\ee
and
\bestar
\sum_{j=1}^{m}\mathbb{E}\Big(\sum_{i\ne j} |\Delta_i||\Delta_{ij}|\Big)^2&\le& 36\sum_{j=1}^{m}\mathbb{E}\Big(\sum_{i\ne j} |\xi_i-\xi'_i||\xi_j-\xi'_j|I(e_i\sim e_j)\Big)^2\\
&=&144\, \widetilde{p}\,^2(1-\widetilde{p}\,)^2\sum_{j=1}^{m} \sum_{i\ne j}I(e_i\sim e_j) \\
&&~~~~+ 288 \,\widetilde{p}\,^3(1-\widetilde{p}\,)^3\sum_{j=1}^{m} \sum_{i_1\ne j}\sum_{i_2\not\in \{i_1,j\}}I(e_{i_1}\sim e_j)I(e_{i_2}\sim e_j).
\eestar
By observing that
$
\sum_{i\ne j}I(e_i\sim e_j)=d_{n, e_j^+}+d_{n, e_j^-}-2
$
for any fixed $j$,
we have
\bestar
\sum_{j=1}^{m}\sum_{i\ne j}I(e_i\sim e_j)\le \sum_{j=1}^{m} (d_{n, e_j^+}+d_{n, e_j^-})=\sum_{i=1}^n d_{n,i}^2,
\eestar
and
\bestar
&&\sum_{j=1}^{m} \sum_{i_1\ne j}\sum_{i_2\not\in \{i_1,j\}}I(e_{i_1}\sim e_j)I(e_{i_2}\sim e_j)\\
&\le& \sum_{j=1}^{m} \Big(\sum_{i\ne j}I(e_i\sim e_j)\Big)^2=\sum_{j=1}^{m} (d_{n, e_j^+}+d_{n, e_j^-}-2)^2\\
&\le& 2\sum_{j=1}^{m} (d_{n, e_j^+}^2+d_{n, e_j^-}^2)=2\sum_{i=1}^{n} d_{n,i}^3.
\eestar
Therefore,
\be
\sum_{j=1}^m\mathbb{E}\Big(\sum_{i\ne j} |\Delta_i||\Delta_{ij}|\Big)^2\le 144\, \widetilde{p}\,^2(1-\widetilde{p}\,)^2 \sum_{i=1}^n d_{n,i}^2+576 \,\widetilde{p}\,^3(1-\widetilde{p}\,)^3 \sum_{i=1}^n d_{n,i}^3.  \label{Deltai2}
\ee
Note that
\bestar
 \widetilde{p}\,^2(1-\widetilde{p}\,)^2\sum_{i=1}^n d_{n,i}^2&\le&\widetilde{p}\,^2(1-\widetilde{p}\,)^2 \Big(\sum_{i=1}^n d_{ni}\sum_{i=1}^n d_{n,i}^3\Big)^{1/2}=\widetilde{p}\,^2(1-\widetilde{p}\,)^2 \Big(2m\sum_{i=1}^n d_{n,i}^3\Big)^{1/2}\\
 &\le& m\widetilde{p}\,(1-\widetilde{p}\,)+ \widetilde{p}\,^3(1-\widetilde{p}\,)^3 \sum_{i=1}^n d_{n,i}^3.
 \eestar
 The desired result follows from (\ref{corthBE1}),  (\ref{Deltai1}) and (\ref{Deltai2}).
\end{proof}

\subsection{Percolation on random recursive trees} \label{sect4.2}
 K\"{u}rsten \cite{KU2016} revealed a connection between the ERW and Bernoulli bond percolation on random recursive trees. The processes $\hat{S}_n$ and $\check{S}_n$ are closely related to
random recursive trees. Specifically,
let $(U_i)_{i\ge 2}$ be defined as in Section \ref{sectintro} and
consider $\mathbb{T}_n$, the random graph with  vertex set $\{1,2, \cdots, n\}$ and
edge set $\{(U_i, i): i=2,\cdots, n\}$.  $\mathbb{T}_n$ is thereby a random recursive tree of size $n$.
Random recursive tree have been extensively studied for their various theoretical properties and applications. For more details, we refer to \cite{M1995}  and references therein.

Now, let $(\varepsilon_i)_{i\ge 2}$ be defined as in Section \ref{sectintro}.
We can construct Bernoulli bond percolation on $\mathbb{T}_n$ with survival parameter $1-p\in [0, 1]$
as follows: for  $2\le i\le n$, the edge $(U_i, i)$ in $\mathbb{T}_n$ is open if $\varepsilon_i=0$ and closed if $\varepsilon_i=1$.
Moreover, the quantity $\nu_k(n)$ defined in (\ref{nuk}) is the number of percolation clusters of size $k$.

For all $n,i, j\in \mathbb{N}$ with $1\le i, j\le n$, define
\be
 I_{i,j}=I(U_j=i) \label{Iij},   \label{Iij}
\ee
and let $D_{n,i}$ be the  degree of  vertex $i$ in the random recursive tree $\mathbb{T}_n$, given by
\be
D_{n,i}=I(i\ne 1)+\sum_{j=i+1}^n I_{i,j}.  \label{Dni}
\ee
As $\nu_1(n)$ denotes the number of isolated vertices in the Bernoulli bond percolation on $\mathbb{T}_n$, we can express $\nu_1(n)$ as
$
\nu_1(n)=\sum_{i=1}^n J_i,
$
where $J_i=1$ if  $i$ is an isolated vertex and $J_i=0$ otherwise.
Observing that $\mbox{Cov}(J_i, J_j | \mathbb{T}_n)=0$ if $(i,j)\not\in \mathbb{T}_n$,  simple calculations show that 
\be
\mu(\mathbb{T}_n)&:=&\mathbb{E}(\nu_1(n)|\mathbb{T}_n)=\sum_{i=1}^n \mathbb{E}(J_i|\mathbb{T}_n)=\sum_{i=1}^n p^{D_{n,i}},\label{munT}\\
\sigma^2(\mathbb{T}_n) &:=& \mbox{Var}(\nu_1(n)|\mathbb{T}_n)
=\sum_{i=1}^n \mbox{Var}(J_i|\mathbb{T}_n)+\sum_{(i,j)\in \mathbb{T}_n} \mbox{Cov}(J_i,J_j|\mathbb{T}_n)\nonumber\\
&=&\sum_{i=1}^n (p^{D_{n,i}}-p^{2D_{n,i}})+\sum_{(i,j)\in \mathbb{T}_n}p^{D_{n,i}+D_{n,j}-1} (1-p)\nonumber\\
&=&\sum_{i=1}^n (p^{D_{n,i}}-p^{2D_{n,i}})+2(1-p)\sum_{1\le i<j\le n}I_{i,j}p^{D_{n,i}+D_{n,j}-1}. \label{sigmanT}
\ee

In this subsection, we will provide some fundamental properties of $D_{n,i},~\mu(\mathbb{T}_n)$ and $\sigma^2(\mathbb{T}_n)$.

\begin{lemma} \label{lemma1} For any $l\in \mathbb{N}$, we have
\bestar
\sum_{i=1}^n  \mathbb{E}(D_{n,i}^l)=O(n).
\eestar
\end{lemma}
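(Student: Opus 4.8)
The plan is to bound $\sum_{i=1}^n \mathbb{E}(D_{n,i}^l)$ by exploiting the explicit Bernoulli structure of the indicators $I_{i,j}=I(U_j=i)$ and the independence of the $U_j$'s. Recall from (\ref{Dni}) that $D_{n,i}=I(i\ne 1)+\sum_{j=i+1}^n I_{i,j}$, where, for fixed $j$, exactly one of $I_{1,j},\dots,I_{j-1,j}$ equals $1$, and $\mathbb{E}(I_{i,j})=\mathbb{P}(U_j=i)=1/(j-1)$ for $i<j$. Since $I(i\ne 1)\le 1$, it suffices (up to the additive $O(n)$ coming from the bounded term and binomial expansion) to control $\sum_{i=1}^n \mathbb{E}\big(\big(\sum_{j=i+1}^n I_{i,j}\big)^l\big)$.

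The key step is a moment estimate: I would expand $\big(\sum_{j>i} I_{i,j}\big)^l$ and use that the $U_j$ are independent, so the $I_{i,j}$ (for fixed $i$, varying $j$) are independent Bernoulli$(1/(j-1))$ random variables. Hence $\sum_{j>i} I_{i,j}$ has mean $\sum_{j=i+1}^n 1/(j-1) = O(\log n)$ and, being a sum of independent bounded variables with summable-in-$j$ but logarithmically-growing mean, all its moments of fixed order $l$ are $O((\log n)^l)$ — more precisely one shows $\mathbb{E}\big(\big(\sum_{j>i} I_{i,j}\big)^l\big)\le C_l (\log n)^l$ uniformly in $i$, either by a direct multinomial count of the non-vanishing terms (a term $\mathbb{E}(I_{i,j_1}\cdots I_{i,j_r})$ with distinct indices contributes $\prod 1/(j_a-1)$, and the number of ordered $l$-tuples hitting a given set of $r$ distinct values is a Stirling-number constant depending only on $l$) or by a Rosenthal-type inequality. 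Summing over $i=1,\dots,n$ then gives $\sum_i \mathbb{E}(D_{n,i}^l) \le C_l\, n (\log n)^l$.

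This is weaker than the claimed $O(n)$, so the final ingredient — and the main obstacle — is to remove the $(\log n)^l$ factor. The point is that $D_{n,i}$ concentrates near its mean $\mathbb{E}(D_{n,i}) = I(i\ne 1) + \sum_{j=i+1}^n 1/(j-1)$, which is $O(\log n)$ only for small $i$ and is $O(1)$ for $i$ of order $n$; in fact $\sum_{i=1}^n \mathbb{E}(D_{n,i}) = \sum_{i=1}^n\big(I(i\ne1)+\sum_{j=i+1}^n \tfrac1{j-1}\big) = n-1 + \sum_{j=2}^n 1 = O(n)$ by swapping the order of summation (each edge contributes $2$ to the total degree, so $\sum_i D_{n,i}=2(n-1)$ identically). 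To upgrade this to higher moments I would split the sum at a threshold, say $i\le n/\log n$ versus $i> n/\log n$: for the (few) small-index vertices use the crude $O((\log n)^l)$ bound, contributing $O\big((n/\log n)(\log n)^l\big)=O(n(\log n)^{l-1})$ — still not enough, so instead I would argue more carefully using that $\mathbb{E}(D_{n,i}^l)\le C_l\big(1+(\mathbb{E}D_{n,i})^l\big)$ (valid for sums of independent Bernoullis with the Poisson-type tail, via $\mathbb{E}(D_{n,i}^l)\le C_l \max(\lambda_i,\lambda_i^l)$ where $\lambda_i=\mathbb{E}D_{n,i}$), and then bound $\sum_{i=1}^n (\mathbb{E}D_{n,i})^l$. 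Since $\mathbb{E}(D_{n,i}) \le 1+\log(n/i)$ for $i\ge 2$ and $\le C\log n$ always, the sum $\sum_{i=2}^n (1+\log(n/i))^l$ is, after the substitution $i=nt$, comparable to $n\int_0^1 (1+\log(1/t))^l\,dt = O_l(n)$ because $\int_0^1 (\log(1/t))^l\,dt = l! < \infty$. Combining, $\sum_{i=1}^n \mathbb{E}(D_{n,i}^l) = O(n)$, with the constant depending on $l$, which is exactly the assertion.
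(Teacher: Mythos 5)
Your proposal is correct and follows essentially the same route as the paper: both exploit that, for fixed $i$, the indicators $I_{i,j}$ are independent Bernoullis, expand the $l$-th moment to get $\mathbb{E}(D_{n,i}^l)\le C_l\max(\lambda_i,\lambda_i^l)$ with $\lambda_i=\mathbb{E}(D_{n,i})\le 1+\log(n-1)-\log(i-1)$, and then bound $\sum_{i}(1+\log((n-1)/(i-1)))^l$ by the convergent integral $n\int_0^1(1+\log(1/t))^l\,dt=O_l(n)$. The only cosmetic difference is that you first record the too-weak uniform $(\log n)^l$ bound before refining it, whereas the paper goes directly to the $i$-dependent estimate.
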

\begin{proof}
Recall that $I_{i,j}=I(U_j=i)$ for any $1\le i, j\le n$, and
\bestar
D_{n,1}=\sum_{j=2}^{n} I_{1,j},
~~~~
 D_{n,i}=1+\sum_{j=i+1}^{n}I_{i,j}, ~~~~2\le i\le n.
\eestar
We have
\bestar
\mathbb{E} (D_{n,i}-1)&=&\sum_{j=i+1}^n \mathbb{E}(I_{i,j})=\sum_{j=i+1}^n \frac{1}{j-1}\\
&\le& \int_i^n (x-1)^{-1}dx=\log (n-1)-\log (i-1),~~~~i\ge 2,
\eestar
and
\bestar
\mathbb{E} (D_{n,1})=\sum_{j=2}^n \mathbb{E}(I_{j,1})=\sum_{j=1}^{n-1} \frac{1}{j}\le 1+\log(n-1).
\eestar
Noting that $I_{i,j}, j=i+1,\cdots, n$ are independent for any fixed $i$ gives that
\bestar
\mathbb{E} (D_{n,i}-1)^l &=&\mathbb{E} \Big(\sum_{j=i+1}^n I_{i,j}\Big)^l=\sum_{j_1=i+1}^n \cdots\sum_{j_l=i+1}^n \mathbb{E} \Big(\prod_{k=1}^l I_{i, j_k}\Big)\\
&\le &  C\Big(\mathbb{E}(D_{n,i}-1)+\cdots+ (\mathbb{E} (D_{n,i}-1))^l \Big)\\
&\le& C (\mathbb{E} (D_{n,i}))^l=C (1+ \log (n-1)-\log (i-1) )^l,~~~~i\ge 2,
\eestar
and similarly,
\bestar
\mathbb{E} (D_{n,1}^l)\le C (1+ \log (n-1))^l.
\eestar
Hence for $n\ge 2$,
\bestar
\sum_{i=1}^n \mathbb{E} (D_{n,i}^l)
&\le& \mathbb{E} (D_{n,1}^l) + \sum_{i=2}^n  2^{l-1} (\mathbb{E} (D_{n,i}-1)^l+1) \\
&\le& C\sum_{i=2}^n   (1+ \log (n-1)-\log (i-1) )^l\\
&\le& C(\log n)^l+
C\int_1^{n-1} (1+\log (n-1)-\log x)^{l}dx\le Cn.
\eestar
The proof of Lemma \ref{lemma1} is complete.
\end{proof}

\begin{lemma}\label{lemma43} 
Let $\mu(\mathbb{T}_n)$  be defined in  (\ref{munT}).
Then 
\bestar
\mathbb{E}(\mu(\mathbb{T}_n))=\frac{np}{2-p}+O(1),~~~~
\mbox{Var}(\mu(\mathbb{T}_n))=\sigma_{3}^2 n+O(1),
\eestar
where
\be
 \sigma_{3}^2=\frac{2p^2(1-p)^4}{(2-p^2)(2-p)^2(3-2p)}. \label{sigma1}
\ee
\end{lemma}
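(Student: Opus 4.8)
The plan is to compute $\mathbb{E}(\mu(\mathbb{T}_n))$ and $\mathrm{Var}(\mu(\mathbb{T}_n))$ directly from the representation $\mu(\mathbb{T}_n)=\sum_{i=1}^n p^{D_{n,i}}$ in (\ref{munT}), exploiting that for each fixed $i$ the indicators $I_{i,j}=I(U_j=i)$, $j=i+1,\dots,n$, are independent with $\mathbb{E}(I_{i,j})=1/(j-1)$, and that $D_{n,i}=I(i\ne 1)+\sum_{j>i}I_{i,j}$. First I would evaluate the generating-function-type quantity $\mathbb{E}(p^{D_{n,i}})$: since the $I_{i,j}$ are independent Bernoulli, $\mathbb{E}(p^{\sum_{j>i}I_{i,j}})=\prod_{j=i+1}^n\bigl(1-\tfrac{1-p}{j-1}\bigr)=\frac{\Gamma(i)\Gamma(n-p)}{\Gamma(i-p)\Gamma(n)}$ for $i\ge 2$ (with the analogous product from $j=2$ for $i=1$). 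Summing $p\,\mathbb{E}(p^{D_{n,i}-1})$ over $i$ and using $\sum_{i}\frac{\Gamma(i)}{\Gamma(i-p)}\sim\frac{\Gamma(n)}{(1-p)\Gamma(n-p)}\cdot n$ (a standard Euler-summation / Stirling estimate) gives $\mathbb{E}(\mu(\mathbb{T}_n))=\frac{np}{2-p}+O(1)$; the exact constant $\frac{p}{2-p}$ comes out after simplifying $\frac{p}{1-p}\cdot\frac{1}{1+(1-p)}=\frac{p}{2-p}$ in the leading term. This matches (and re-derives) the first moment asymptotics underlying Proposition \ref{lemmaadd2}.

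For the variance I would expand $\mathrm{Var}(\mu(\mathbb{T}_n))=\sum_{i,i'}\mathrm{Cov}(p^{D_{n,i}},p^{D_{n,i'}})$. For $i<i'$ the dependence between $D_{n,i}$ and $D_{n,i'}$ enters only through the single indicator $I_{i,i'}=I(U_{i'}=i)$ (the event that $i'$ attaches to $i$), because all other summands in $D_{n,i}$ and $D_{n,i'}$ involve distinct, hence independent, uniform variables. Conditioning on $I_{i,i'}$ and writing $D_{n,i}=\tilde D_{n,i}+I_{i,i'}$ with $\tilde D_{n,i}$ independent of everything in $D_{n,i'}$, one gets $\mathrm{Cov}(p^{D_{n,i}},p^{D_{n,i'}})=\mathbb{E}(p^{\tilde D_{n,i}})\,\mathbb{E}(p^{D_{n,i'}-1})\,(p-1)(p^{\,?}\!-\!\cdots)\cdot\frac{1}{i'-1}$ up to a clean closed form; in any case the covariance is a product of two ratios of Gamma functions times $\frac{1}{i'-1}$ times an $O(1)$ factor. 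Summing first over $i'$ and then over $i$, together with the variance terms $\mathrm{Var}(p^{D_{n,i}})=\mathbb{E}(p^{2D_{n,i}})-(\mathbb{E}(p^{D_{n,i}}))^2$ (for which $\mathbb{E}(p^{2D_{n,i}})=\prod_{j>i}(1-\tfrac{1-p^2}{j-1})$, again Gamma-function ratios), yields a leading term $\sigma_3^2 n$ with an explicit rational function of $p$; a somewhat tedious but routine algebraic simplification should collapse it to $\frac{2p^2(1-p)^4}{(2-p^2)(2-p)^2(3-2p)}$. All error terms are $O(1)$ because each correction is a convergent series (the partial sums of $\frac{\Gamma(i)}{\Gamma(i-p)}\big/\frac{\Gamma(n)}{\Gamma(n-p)}$-type ratios telescope against $\frac{1}{i}$-weights).

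The main obstacle is the bookkeeping in the covariance sum: one must correctly identify that $\mathrm{Cov}(p^{D_{n,i}},p^{D_{n,i'}})=0$ unless $i'$ is a potential child of $i$, handle the asymmetric roles of $i$ and $i'$ (only $D_{n,i}$ — not $D_{n,i'}$ — is affected by the shared edge), and then carry out the double summation with Gamma-ratio asymptotics without losing or double-counting the $O(1)$ terms. The exponent $2-p^2$ in the denominator of $\sigma_3^2$ is the telltale sign that the term $\mathbb{E}(p^{2D_{n,i}})$ (equivalently $\prod(1-\tfrac{1-p^2}{j-1})\sim c\, n^{-(1-p^2)}$, summing to $\asymp n$ only when $1-p^2<1$, i.e. always) must be tracked carefully alongside the square of the first moment. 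Once the correct closed form for each pairwise covariance is in hand, the summation is a mechanical application of the estimate $\sum_{i=1}^n\frac{\Gamma(i)}{\Gamma(i-a)}=\frac{\Gamma(n+1)}{(1-a)\Gamma(n+1-a)}+O(1)$ used twice.
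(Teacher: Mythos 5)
Your first-moment computation is essentially the paper's (the paper writes the same product of independent Bernoulli generating functions as $a_{i,n}(1-p)=\prod_{k=i}^{n-1}(1-\tfrac{1-p}{k})\sim (i/n)^{1-p}$ rather than as Gamma ratios, and sums to get $\tfrac{np}{2-p}+O(1)$), so that half is fine. The variance argument, however, contains a genuine error in the identification of the covariance structure, and it would produce the wrong constant $\sigma_3^2$.

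Your claim that for $i<i'$ ``the dependence between $D_{n,i}$ and $D_{n,i'}$ enters only through the single indicator $I_{i,i'}=I(U_{i'}=i)$'' is wrong on two counts. First, $I_{i,i'}$ is a function of $U_{i'}$, which does not appear in $D_{n,i'}=I(i'\ne 1)+\sum_{k>i'}I(U_k=i')$ at all (the parent edge of $i'$ is counted by the deterministic $+1$, regardless of who the parent is); so the mechanism you single out contributes \emph{zero} covariance. Second, and decisively, the degrees $D_{n,i}$ and $D_{n,i'}$ \emph{do} share the uniforms $U_k$ for every $k>i'$: vertex $k$ contributes $I(U_k=i)$ to one degree and $I(U_k=i')$ to the other, and these indicators are mutually exclusive, hence negatively correlated. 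This is exactly why the paper computes $\mathbb{E}\big(p^{D_{n,i}^*+D_{n,j}^*}\big)=a_{i,j}(1-p)\,a_{j,n}(2-2p)$ rather than $a_{i,n}(1-p)\,a_{j,n}(1-p)$, and the resulting cross-covariance sum $2\sum_{i<j}\mathrm{Cov}(p^{D_{n,i}^*},p^{D_{n,j}^*})=-\tfrac{(1-p)^2}{(2-p)^2(3-2p)}\,n+O(1)$ is of the \emph{same order} as the diagonal sum $\sum_i\mathrm{Var}(p^{D_{n,i}^*})=\tfrac{(1-p)^2}{(2-p^2)(3-2p)}\,n+O(1)$. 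Keeping only the diagonal terms (which is what your scheme amounts to, since your proposed off-diagonal mechanism vanishes) would give $\sigma_3^2=\tfrac{p^2(1-p)^2}{(2-p^2)(3-2p)}$, which does not equal the stated $\tfrac{2p^2(1-p)^4}{(2-p^2)(2-p)^2(3-2p)}$ for any $p\in(0,1)$. To repair the proof you must compute, for each pair $i<j$ and each $k>j$, the joint factor $\mathbb{E}\big(p^{I(U_k=i)+I(U_k=j)}\big)=1-\tfrac{2(1-p)}{k-1}$ and carry the product $a_{j,n}(2-2p)-a_{j,n}^2(1-p)$ through the double sum, as the paper does via the telescoping identity for $a_{i,j}(2x)-a_{i,j}^2(x)$.
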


\begin{proof}
For any $i<j$ and $0\le x\le (1-p^2)(i\wedge 2)$, we set
\be
a_{i,j}(x)=\prod_{k=i}^{j-1} \Big(1-\frac{x}{k}\Big). \label{aijx}
\ee
Noting that there exists  $c_p>0$ such that $-t\ge \ln(1-t)\ge -t-c_pt^2$ for any $0\le t\le 1-p^2$, we have
\be
a_{i,j}(x)&=&\exp\Big\{\sum_{k=i}^{j-1} \ln \Big(1-\frac{x}{k}\Big) \Big\}=\exp\Big\{-\sum_{k=i}^{j-1} \frac{x}{k}+O(1) i^{-1}\Big\}\nonumber\\
&=& \exp\Big\{-x\ln \Big(\frac{j}{i}\Big)+O(1) i^{-1}\Big\}=\Big(\frac{i}{j}\Big)^{x} (1+O(1)i^{-1}). \label{aijxest}
\ee
Moreover,
\be
&&a_{i,j}(2x)-a_{i,j}^2(x)\nonumber\\
&=& \sum_{l=i}^{j-1} \Big(a_{i,l+1}(2x)a_{l+1,j}^2(x)-a_{i,l}(2x)a_{l,j}^2(x)\Big)=-x^2\sum_{l=i}^{j-1}a_{i,l}(2x)a_{l+1,j}^2(x)l^{-2}\nonumber\\
&=&-x^2 a_{i,j}^2(x)\sum_{l=i}^{j-1} \frac{1}{l^2a_{l,l+1}^2(x)}-x^2 \sum_{l=i}^{j-1}a_{l+1,j}^2(x)l^{-2}(a_{i,l}(2x)-a_{i,l}^2(x))\nonumber\\
&=&-x^2 a_{i,j}^2(x)\sum_{l=i}^{j-1} \frac{1}{l^2(1-x/l)^2}-x^2 \sum_{l=i}^{j-1}a_{l+1,j}^2(x)l^{-2}\sum_{k=i}^{l-1} \Big(a_{i,k+1}(2x)a_{k+1,l}^2(x)-a_{i,k}(2x)a_{k,l}^2(x)\Big)\nonumber\\
&=&-x^2 a_{i,j}^2(x)\sum_{l=i}^{j-1} \frac{1}{l^2(1-x/l)^2}+O(1) a_{i,j}^2(x)\sum_{l=i}^{j-1}\sum_{k=i}^{l-1} \frac{(lk)^{-2}}{(1-x/l)^2(1-x/k)^2}\nonumber\\
&=&-x^2 a_{i,j}^2(x) (i^{-1}-j^{-1}) +O(1)i^{-2}a_{i,j}^2(x), \label{aijdiff}
\ee
where we conventionally define $a_{i,i}(x)=1$ for $x\ge 0$.

Let $D_{n,i}^*=\sum_{j=i+1}^n I_{i,j}$ for $1\le i< n$.
Then for any $l\in \mathbb{N}$,
\bestar
\mathbb{E} (p^{lD_{n,i}^*})=\prod_{j=i+1}^{n} \mathbb{E}(p^{lI_{i,j}})=a_{i,n}(1-p^l),
\eestar
and for $i<j$,
\bestar
\mathbb{E} (p^{D_{n,i}^*+D_{n,j}^*})= \prod_{k=i+1}^{j} \mathbb{E}(p^{I_{i,k}}) \prod_{k=j+1}^{n} \mathbb{E}(p^{I(U_k\in \{i,j\})})
=a_{i,j}(1-p) a_{j,n}(2-2p).
\eestar
Therefore by (\ref{aijxest})  and  (\ref{aijdiff}),  we have
\be
\mathbb{E}\Big(\sum_{i=1}^{n-1} p^{D_{n,i}^*}\Big)=\sum_{i=1}^{n-1}  a_{i,n}(1-p)=\sum_{i=1}^{n-1} \frac{i^{1-p}}{n^{1-p}} +O(1)\sum_{i=1}^{n-1} \frac{i^{-p}}{n^{1-p}}=\frac{n}{2-p}+O(1),  \label{Esump*}
\ee
and
\be
&&\mbox{Var}\Big(\sum_{i=1}^{n-1} p^{D_{n,i}^*}\Big)=\sum_{i=1}^{n-1} \mbox{Var}\Big( p^{D_{n,i}^*}\Big)+2\sum_{1\le i<j\le n-1}\mbox{Cov}\Big(p^{D_{n,i}^*}, p^{D_{n,j}^*}\Big)\nonumber\\
&&~~~~=\sum_{i=1}^{n-1} \Big(a_{i,n}(1-p^2)-a^2_{i,n}(1-p)\Big)+2\sum_{1\le i<j\le n-1}  a_{i,j}(1-p) (a_{j,n}(2-2p)-a_{j,n}^2(1-p))\nonumber\\
&&~~~~=\sum_{i=1}^{n-1} \Big(\frac{i^{1-p^2}}{n^{1-p^2}} - \frac{i^{2-2p}}{n^{2-2p}}\Big) -2(1-p)^2\sum_{1\le i<j\le n-1} \Big(\frac{i}{j}\Big)^{1-p}  \Big(\frac{j}{n}\Big)^{2-2p}   (j^{-1}-n^{-1}) +O(1)\nonumber\\
&&~~~~=\frac{n}{2-p^2}-\frac{n}{3-2p}-2(1-p)^2n\int_0^1\int_x^1 (xy)^{1-p}(y^{-1}-1) dxdy+O(1)\nonumber\\
&&~~~~=\frac{(1-p)^2n}{(2-p^2)(3-2p)}-\frac{(1-p)^2n}{(2-p)^2(3-2p)}+O(1)\nonumber\\
&&~~~~=\frac{2(1-p)^4n}{(2-p^2)(2-p)^2(3-2p)}+O(1)=p^{-2}\sigma_{3}^2n+O(1).    \label{Varsump*}
\ee
Furthermore,
\bestar
\mathbb{E}\Big(\sum_{i=1}^{n} p^{D_{n,i}}\Big)=p\mathbb{E}\Big(\sum_{i=1}^{n-1} p^{D_{n,i}^*}\Big)+(1-p)\mathbb{E}(p^{D_{n,1}^*})+p=\frac{np}{2-p}+O(1),
\eestar
and 
\bestar
&&\mbox{Var}\Big(\sum_{i=1}^{n} p^{D_{n,i}}\Big)=\mbox{Var}\Big(p\sum_{i=1}^{n-1} p^{D_{n,i}^*}+(1-p)p^{D_{n,1}^*}+p\Big)\\
&&~~~~~~~~=p^2\mbox{Var}\Big(\sum_{i=1}^{n-1} p^{D_{n,i}^*}\Big)+(1-p^2)\mbox{Var}(p^{D_{n,1}^*})+2p(1-p)\sum_{i=2}^{n-1}\mbox{Cov}\Big(p^{D_{n,1}^*}, p^{D_{n,i}^*}\Big)\\
&&~~~~~~~~=\sigma_{3}^2 n-2p(1-p)^3\sum_{i=2}^{n-1} \Big(\frac{1}{i}\Big)^{1-p}  \Big(\frac{i}{n}\Big)^{2-2p}   (i^{-1}-n^{-1})+O(1)\\
&&~~~~~~~~=\sigma_{3}^2 n+O(1).
\eestar
The proof of Lemma \ref{lemma43} is complete.
\end{proof}

\begin{lemma} \label{lemmaadd1}
Let $\sigma^2(\mathbb{T}_n)$ be defined in (\ref{sigmanT}) and let
\be
\sigma_{4}^2=\frac{2p(1-p)(3-p^3)}{(2-p)(2-p^2)(3-2p)}.  \label{sigma2}
\ee
Then we have
$
\mathbb{E}(\sigma^2(\mathbb{T}_n))=\sigma_{4}^2 n+O(1) $ and $\mbox{Var}(\sigma^2(\mathbb{T}_n))\le Cn.
$
\end{lemma}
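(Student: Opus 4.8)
The plan is to start from the decomposition already contained in (\ref{sigmanT}), namely
\[
\sigma^2(\mathbb{T}_n)=A_n-B_n+C_n,\qquad A_n=\sum_{i=1}^n p^{D_{n,i}},\quad B_n=\sum_{i=1}^n p^{2D_{n,i}},\quad C_n=2(1-p)\!\!\sum_{1\le i<j\le n}\!\!I_{i,j}\,p^{D_{n,i}+D_{n,j}-1},
\]
to evaluate each expectation up to an $O(1)$ error and to check that the three leading constants combine to $\sigma_4^2$, and then to obtain the variance bound by a bounded-difference argument. Since $A_n=\mu(\mathbb{T}_n)$, Lemma \ref{lemma43} already gives $\mathbb{E}(A_n)=\frac{np}{2-p}+O(1)$.

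For $B_n$, writing $D_{n,i}^*=\sum_{j>i}I_{i,j}$ as in the proof of Lemma \ref{lemma43}, one has $\mathbb{E}(p^{2D_{n,i}^*})=a_{i,n}(1-p^2)$, and summing via (\ref{aijxest}) (the index-$1$ term contributing only $O(1)$) gives $\mathbb{E}(B_n)=\frac{np^2}{2-p^2}+O(1)$. For $C_n$, I would rewrite it as $C_n=2(1-p)\sum_{j=2}^n p^{D_{n,U_j}+D_{n,j}-1}$; conditioning on $\{U_j=i\}$ and using that $(U_m)_{m\ne j}$ is independent of $U_j$ together with $I(U_m=i)+I(U_m=j)=I(U_m\in\{i,j\})$ for $m>j$, one finds
\[
\mathbb{E}\big(p^{D_{n,i}+D_{n,j}-1}\mid U_j=i\big)=p^{\,1+I(i\ne 1)}\,a_{i,j-1}(1-p)\,a_{j,n}(2-2p).
\]
Summing over $i\in\{1,\cdots,j-1\}$ by (\ref{aijxest}) (the $i=1$ contribution being negligible after the outer sum) yields $\mathbb{E}\big(p^{D_{n,U_j}+D_{n,j}-1}\big)=\frac{p^2}{2-p}\,a_{j,n}(2-2p)+O(1/j)\,a_{j,n}(2-2p)$, and then summing over $j$, again by (\ref{aijxest}), gives $\mathbb{E}(C_n)=\frac{2p^2(1-p)n}{(2-p)(3-2p)}+O(1)$. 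Adding the three pieces and clearing the common denominator $(2-p)(2-p^2)(3-2p)$, a direct computation shows that
\[
\frac{p}{2-p}-\frac{p^2}{2-p^2}+\frac{2p^2(1-p)}{(2-p)(3-2p)}=\frac{2p(1-p)(3-p^3)}{(2-p)(2-p^2)(3-2p)}=\sigma_4^2 ,
\]
so that $\mathbb{E}(\sigma^2(\mathbb{T}_n))=\sigma_4^2 n+O(1)$.

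For the variance, I would note that $\sigma^2(\mathbb{T}_n)$ is a measurable function of the independent variables $U_2,\cdots,U_n$ and apply the Efron--Stein inequality (\cite{ES1981}): it suffices to show that replacing a single $U_k$ by an independent copy changes $\sigma^2(\mathbb{T}_n)$ by at most a constant $C=C(p)$, for then $\mbox{Var}(\sigma^2(\mathbb{T}_n))\le\frac12(n-1)C^2=O(n)$. Changing $U_k$ from $a$ to $b$ decreases $D_{n,a}$ by one and increases $D_{n,b}$ by one, leaving every other degree unchanged; hence $A_n-B_n=\sum_i(p^{D_{n,i}}-p^{2D_{n,i}})$ changes in only two of its summands, each bounded by $1$ in absolute value. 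In $C_n=2(1-p)\sum_{j=2}^n p^{D_{n,U_j}+D_{n,j}-1}$, the affected edge-terms are the single edge $(a,k)$ (which becomes $(b,k)$), the parent edges of $a$ and of $b$, and the child edges of $a$ and of $b$; there are at most $D_{n,a}$ edges incident to $a$, and each of their terms carries a factor $p^{D_{n,a}-1}$, so their total change is at most $c_p\,D_{n,a}\,p^{D_{n,a}}\le c_p\sup_{d\ge 0}(dp^d)<\infty$, and the same bound holds at $b$. Thus the total change of $\sigma^2(\mathbb{T}_n)$ is bounded by a constant, as required.

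The step I expect to be the main obstacle is exactly this bounded-difference estimate for $C_n$: perturbing one $U_k$ can, a priori, affect $D_{n,a}$ and hence as many as $D_{n,a}$ edge-terms, and $D_{n,a}$ is unbounded (its mean is of order $\log(n/a)$ by Lemma \ref{lemma1}). What rescues the estimate is that each of those edge-terms carries the factor $p^{D_{n,a}}$, so the total is controlled by the boundedness of $d\mapsto dp^d$ on $[0,\infty)$ for $p\in(0,1)$. One must also keep track of the finitely many boundary terms (index $1$, and $j=n$) and check that the exponents $1-p^2$, $1-p$, $2-2p$ appearing above all lie in $\big(0,\,2(1-p^2)\big]$, so that the expansion (\ref{aijxest}) is available for the relevant indices; these are routine points that only affect the $O(1)$.
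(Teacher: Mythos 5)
Your proposal is correct, and it splits naturally into two halves of different character. The computation of $\mathbb{E}(\sigma^2(\mathbb{T}_n))$ is essentially the paper's: the paper isolates the same identity $I_{i,j}p^{D_{n,i}+D_{n,j}-1}=I_{i,j}p^{\sum_{k\ne j}(I_{i,k}+I_{j,k})}p^{1+I(i\ne1)}$ and evaluates the unconditional expectation directly, whereas you condition on $U_j=i$ and factor the product over $i<k<j$ and $k>j$; both routes give $\frac{p^2 n}{(2-p)(3-2p)}+O(1)$ for the edge sum and the same algebraic identity for $\sigma_4^2$, so this is only a reorganization. The variance bound is where you genuinely diverge. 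The paper also invokes Efron--Stein, but then estimates $\mathbb{E}\big(\sum_{(i,j)\in\mathcal{I}_l}V_{i,j,l}\big)^2$ term by term: it bounds $\mathbb{E}|V_{i,j,l}V_{i',j',l}|$ for index tuples with at most three distinct values and separately argues that the cross terms with four distinct indices have \emph{non-positive} expectation, which is the delicate point of that proof. You replace all of this with an almost-sure bounded-difference estimate: perturbing one $U_k$ moves a single edge, changes only the degrees of its two old/new endpoints, and although up to $D_{n,a}$ child terms of an endpoint $a$ are affected, each carries the factor $p^{D_{n,a}-1}$, so the aggregate change is controlled by $\sup_{d\ge 0} d\,p^{d}<\infty$. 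This is a correct and noticeably cleaner argument --- it avoids the covariance sign analysis entirely and gives the same $O(n)$ bound with constants depending only on $p$; the only price is that it is specific to the exponential damping $p^{D_{n,i}}$ and would not generalize to functionals without that decay, whereas the paper's second-moment computation is closer in spirit to the machinery it reuses elsewhere (e.g.\ in Lemma \ref{lemma41}). Your closing caveats about the boundary indices and the admissible range of $x$ in (\ref{aijx}) are indeed only $O(1)$ matters and are handled the same way in the paper.
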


\begin{proof}
By using the definitions in (\ref{Iij}) and (\ref{Dni}), when $i<j$ and $I_{i,j}=1$, it follows that
\bestar
D_{n,i}+D_{n,j}-1=I(i\ne 1)+\sum_{k>i, k\ne j} I_{i,k}+I_{i,j}+ \sum_{k> j}I_{j,k}=\sum_{k\ne j}(I_{i,k}+I_{j,k})+1+I(i\ne 1).
\eestar
Consequently,
\be
I_{i,j}p^{D_{n,i}+D_{n,j}-1}=I_{i,j}p^{\sum_{k\ne j}(I_{i,k}+I_{j,k})}p^{1+I(i\ne 1)}.  \label{IpD}
\ee
Moreover, (\ref{IpD}) is also satisfied when $i<j$ and $I_{i,j}=0$.
Therefore, similar arguments as in the proof of  Lemma \ref{lemma43} show that
\be
\mathbb{E}\Big(\sum_{1\le i<j\le n}I_{i,j}p^{D_{n,i}+D_{n,j}-1}\Big)
&=&\sum_{1\le i<j\le n} p^{1+I(i\ne 1)}\mathbb{E}\Big(I_{i,j}p^{\sum_{k\ne j}(I_{i,k}+I_{j,k})}\Big)\nonumber\\
&=&\sum_{1\le i<j\le n} \frac{p^{1+I(i\ne 1)}}{j-1}\Big(\frac{ij}{n^2}\Big)^{1-p}+O(1)\nonumber\\
&=& \frac{p^2n}{(2-p)(3-2p)}+O(1),  \label{EIji}
\ee
and by (\ref{sigmanT}),
\bestar
\mathbb{E}(\sigma^2(\mathbb{T}_n))
&=&\sum_{i=1}^n (\mathbb{E}(p^{D_{n,i}})-\mathbb{E}(p^{2D_{n,i}}))+2(1-p)\mathbb{E}\Big(\sum_{1\le i<j\le n}I_{i,j}p^{D_{n,i}+D_{n,j}-1}\Big) \\
&=& \frac{np}{2-p}-\frac{np^2}{2-p^2}+\frac{2p^2(1-p)n}{(2-p)(3-2p)}+O(1)=\sigma_{4}^2 n+O(1).
\eestar

To estimate $\mbox{Var}(\sigma^2(\mathbb{T}_n))$, we
let $(U_2', \cdots, U_n')$ be an independent copy of $(U_2, \cdots, U_n)$ and define $I'_{i,j}=I(U_j'=i)$ for $1\le i,j\le n$.   Let $D_{n,i}^*=\sum_{j=i+1}^n I_{i,j}$ for $1\le i\le n$.
Applying the Efron-Stein inequality (see \cite{ES1981}) yields
\bestar
\mbox{Var}\Big(\sum_{1\le i<j\le n}I_{i,j}p^{D_{n,i}^*+D_{n,j}^*-1}\Big)
&=&\mbox{Var}\Big(\sum_{1\le i<j\le n}I_{i,j}p^{\sum_{k\ne j}(I_{i,k}+I_{j,k})}\Big)\\
&\le& \frac{1}{2}\sum_{l=2}^n \mathbb{E} \Big( \sum_{(i,j)\in \mathcal{I}_l} V_{i,j,l}
+  \sum_{i=1}^{l-1} V_{i, l}\Big)^2\\
&\le& \sum_{l=2}^n \mathbb{E} \Big(  \sum_{(i,j)\in \mathcal{I}_l} V_{i,j,l}\Big)^2
+ \sum_{l=2}^n \mathbb{E} \Big(   \sum_{i=1}^{l-1} V_{i, l}\Big)^2.
\eestar
where $\mathcal{I}_l=\{(i,j): 1\le i<j\le n, i<l, j\ne l\}$,~
$V_{i,l}=p^{\sum_{k\ne l}(I_{j,k}+I_{l,k})}(I_{i,l}-I'_{i,l})$ for $1\le i<l$, and
\bestar
V_{i,j, l}=I_{i,j}\,p^{\sum_{k\ne j, l}(I_{i,k}+I_{j,k})}\big(p^{I_{i,l}+I_{j,l}}-p^{I'_{i,l}+I'_{j,l}}\big),~~~~ (i,j)\in \mathcal{I}_l.
\eestar
If $(i,j), (i',j')\in \mathcal{I}_l$, then
\bestar
 &&\mathbb{E} \Big|\big(p^{I_{i,l}+I_{j,l}}-p^{I'_{i,l}+I'_{j, l}}\big)\big(p^{I_{i', l}+I_{j', l}}-p^{I'_{i', l}+I'_{j', l}}\big)\Big|\\
&\le& \mathbb{P}(I_{i,l}+I_{j,l}+I'_{i,l}+I'_{j, l}+I_{i', l}+I_{j', l}+I'_{i', l}+I'_{j', l}>0)\\
&=& \mathbb{P} (U_l \in \{i,j,i',j'\} ~\mbox{or}~U'_l \in \{i,j,i',j'\})
\le Cl^{-1},
\eestar
and
\bestar
\mathbb{E}(I_{i,j}I_{i',j'})=\frac{1}{(j-1)(j'-1)}I(j\ne j')+\frac{1}{j-1}I(i=i', j=j').
\eestar
Hence, for any $i,j, i', j', l$ with $(i,j), (i',j')\in \mathcal{I}_l$ and $2\le l\le n$,
\bestar
\mathbb{E} |V_{i,j,l}V_{i',j',l}|&\le& \mathbb{E} \Big|\big(p^{I_{i,l}+I_{j,l}}-p^{I'_{i,l}+I'_{j, l}}\big)\big(p^{I_{i', l}+I_{j', l}}-p^{I'_{i', l}+I'_{j', l}}\big)\Big|\mathbb{E}(I_{i,j}I_{i',j'})\\
&\le& \frac{C}{jj'l}I(j\ne j')+\frac{C}{jl}I(i=i', j=j').
\eestar
By noting that
\bestar
\sum_{l=2}^n \sum_{(i,j)\in \mathcal{I}_l} \frac{1}{jl}\le Cn,~~~~\sum_{l=2}^n \sum_{(i,j), (i',j')\in \mathcal{I}_l  \atop |\{i,j,i'j'\}|\le 3} \frac{1}{jj'l}\le Cn,
\eestar
where $|A|$ denotes the number of distinct elements in the set $A$, we have
\be
\sum_{l=2}^n \sum_{(i,j), (i',j')\in \mathcal{I}_l \atop |\{i,j,i'j'\}|\le 3}\mathbb{E} |V_{i,j,l}V_{i',j',l}|\le Cn. \label{vsum}
\ee

If $|\{i,j,i',j'\}|=4$, then 
\bestar
\big(p^{I_{i,l}+I_{j,l}}-p^{I'_{i,l}+I'_{j, l}}\big)\big(p^{I_{i', l}+I_{j', l}}-p^{I'_{i', l}+I'_{j', l}}\big)\ne 0
\eestar
holds if and only if $U_l\in \{i,j\}, U_l'\in \{i',j'\}$ or
$U_l\in \{i',j'\}, U_l'\in \{i,j\}$. Simple calculations show that 
when $|\{i,j,i',j'\}|=4$, we have
\bestar
\big(p^{I_{i,l}+I_{j,l}}-p^{I'_{i,l}+I'_{j, l}}\big)\big(p^{I_{i', l}+I_{j', l}}-p^{I'_{i', l}+I'_{j', l}}\big)\le 0,
\eestar
 and hence
$\mathbb{E} (V_{i,j,l}V_{i',j',l})\le 0$.
This, together with (\ref{vsum}), implies that
\bestar
\sum_{l=2}^n \mathbb{E} \Big( \sum_{(i,j)\in \mathcal{I}_l}V_{i,j,l} \Big)^2\le
\sum_{l=2}^n \sum_{(i,j), (i',j')\in \mathcal{I}_l \atop |\{i,j,i'j'\}|\le 3}\mathbb{E} (V_{i,j,l}V_{i',j',l})\le Cn.
\eestar
Similarly,  we can obtain that
\bestar
\sum_{l=2}^n \mathbb{E} \Big( \sum_{i=1}^{l-1} V_{i,l}\Big)^2\le Cn.
\eestar
Hence
\be
\mbox{Var}\Big(\sum_{1\le i<j\le n}I_{i,j}p^{D_{n,i}^*+D_{n,j}^*-1}\Big) \le Cn. \label{varijl}
\ee
Similarly,
\be
\mbox{Var}\Big(\sum_{i=2}^n I_{1,i}p^{D_{n,1}^*+D_{n,i}^*-1}\Big)\le C.  \label{varijl2}
\ee
It follows from (\ref{varijl}) and (\ref{varijl2}) that
\be
&&\mbox{Var}\Big(\sum_{1\le i<j\le n}I_{i,j}p^{D_{n,i}+D_{n,j}-1} \Big)\nonumber\\
&=&\mbox{Var} \Big(p^2 \sum_{1\le i<j\le n}I_{i,j}p^{D_{n,i}^*+D_{n,j}^*-1}
+(p-p^2)\sum_{i=2}^n I_{1,i}p^{D_{n,1}^*+D_{n,i}^*-1}\Big)\nonumber\\
&\le& 2\mbox{Var} \Big(\sum_{1\le i<j\le n}I_{i,j}p^{D_{n,i}^*+D_{n,j}^*-1}\Big)
+2\mbox{Var} \Big(\sum_{i=2}^n I_{1,i}p^{D_{n,1}^*+D_{n,i}^*-1}\Big)\le  Cn.  \label{varsumiji}
\ee
Note that Lemma \ref{lemma43} yields
\bestar
\mbox{Var} \Big(\sum_{i=1}^{n} p^{D_{n,i}}\Big)\le Cn, ~~~~\mbox{Var} \Big(\sum_{i=1}^{n} p^{2D_{n,i}}\Big)\le Cn.
\eestar
By (\ref{sigmanT}), we have
\bestar
\mbox{Var}(\sigma^2(\mathbb{T}_n))\le 3\mbox{Var} \Big(\sum_{i=1}^{n} p^{D_{n,i}}\Big)+3\mbox{Var} \Big(\sum_{i=1}^{n} p^{2D_{n,i}}\Big)
+12\mbox{Var}\Big(\sum_{1\le i<j\le n}I_{i,j}p^{D_{n,i}+D_{n,j}-1} \Big)\le Cn.
\eestar
The proof of Lemma \ref{lemmaadd1} is complete.
\end{proof}

\begin{lemma} \label{lemma41} Let $\mu(\mathbb{T}_n)$  and  $\sigma_{3}^2$ be defined   in (\ref{munT}) and (\ref{sigma1}), respectively.
 Then we have
\bestar
d_K\Big(\frac{\mu(\mathbb{T}_n)-\frac{np}{2-p}}{\sigma_{3}\sqrt{n}}, ~\mathbf{Z}\Big)\le Cn^{-1/2}.
\eestar
\end{lemma}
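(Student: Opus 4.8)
The plan is to apply the corollary (\ref{corthBE1}) of Theorem~\ref{thBE1} to the functional
\[
f(\xi):=\sum_{l=1}^{n}p^{D_{n,l}}=\mu(\mathbb{T}_n),\qquad \xi:=(U_2,\dots,U_n),
\]
of the independent coordinates $U_2,\dots,U_n$ (for $2\le i\le n$, $\xi^{\{i\}}$ denotes the configuration obtained by replacing $U_i$ with an independent copy $U_i'$), which will give $d_K(W_n,\mathbf{Z})\le Cn^{-1/2}$ for $W_n:=(\mu(\mathbb{T}_n)-\mathbb{E}\mu(\mathbb{T}_n))/\sigma$ with $\sigma^2:=\mathrm{Var}(\mu(\mathbb{T}_n))$, and then to pass to the normalization in the statement. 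Since Lemma~\ref{lemma43} gives $\mathbb{E}\mu(\mathbb{T}_n)=\tfrac{np}{2-p}+O(1)$ and $\sigma^2=\sigma_3^2n+O(1)$ with $\sigma_3^2>0$, for $n$ large (the statement being trivial otherwise) we may write $\tfrac{\mu(\mathbb{T}_n)-np/(2-p)}{\sigma_3\sqrt n}=\alpha_nW_n+\beta_n$ with $\alpha_n=1+O(n^{-1})$ and $\beta_n=O(n^{-1/2})$; combining $d_K(\alpha_nW_n+\beta_n,\alpha_n\mathbf{Z}+\beta_n)=d_K(W_n,\mathbf{Z})$ with the elementary bound $d_K(\alpha_n\mathbf{Z}+\beta_n,\mathbf{Z})=O(|\alpha_n-1|+|\beta_n|)$ then yields the lemma. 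Hence it suffices to show that the two quantities in (\ref{corthBE1}), namely $\sum_i\mathbb{E}(\Delta_i^4)$ and $\sum_j\mathbb{E}\big(\sum_{i\ne j}|\Delta_i||\Delta_{ij}|\big)^2$, are both $O(n)$, for then $d_K(W_n,\mathbf{Z})\le\tfrac{10}{\sigma^2}(O(n))^{1/2}=O(n^{-1/2})$ because $\sigma^2\asymp n$.

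The structural input is that resampling $U_i$ changes only the degrees of the two vertices $U_i$ (decreased by $1$) and $U_i'$ (increased by $1$); since each term $p^{D_{n,\cdot}}$ lies in $[0,1]$, this gives $|f(\xi)-f(\xi^{\{i\}})|\le2$, hence $|\Delta_i|\le2$ and $\sum_i\mathbb{E}(\Delta_i^4)\le16(n-1)=O(n)$. Resampling both $U_i$ and $U_j$ alters only degrees of vertices in $\{U_i,U_i',U_j,U_j'\}$ (at most four), so the mixed difference $f(\xi)-f(\xi^{\{i\}})-f(\xi^{\{j\}})+f(\xi^{\{i,j\}})$ has modulus at most $16$; moreover, because $f$ is a sum over vertices of functions of individual degrees, this mixed difference vanishes unless the two (at most two-element) sets $\{U_i,U_i'\}$ and $\{U_j,U_j'\}$ meet. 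Setting $E_{ij}:=\{\{U_i,U_i'\}\cap\{U_j,U_j'\}\ne\emptyset\}$, which is measurable with respect to the conditioning $\sigma$-field in the definition of $\Delta_{ij}$ (it contains all of $U_i,U_i',U_j,U_j'$), we conclude $|\Delta_{ij}|\le16\,I(E_{ij})$.

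It then remains to estimate the collision probabilities and carry out the sum. As $U_j,U_j'$ are uniform on $\{1,\dots,j-1\}$ and $U_i,U_i'$ on $\{1,\dots,i-1\}$, a direct calculation gives $\mathbb{P}(E_{ij})\le C/(i\vee j)$; furthermore, conditioning on $(U_j,U_j')$ makes $E_{ij}$ and $E_{i'j}$ independent for $i\ne i'$, and using in addition that the two-element set $\{U_j,U_j'\}$ is unlikely to fall inside a short initial segment $\{1,\dots,i-1\}$, a short case analysis over the relative orders of $i,i',j$ shows that, for pairwise distinct indices,
\[
\mathbb{P}(E_{ij}\cap E_{i'j})\le\frac{C}{i_{(3)}\,i_{(2)}},
\]
where $i_{(1)}\le i_{(2)}\le i_{(3)}$ are the order statistics of $\{i,i',j\}$. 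Substituting $|\Delta_i|\le2$ and $|\Delta_{ij}|\le16\,I(E_{ij})$ and expanding the square,
\begin{align*}
\sum_j\mathbb{E}\Big(\sum_{i\ne j}|\Delta_i||\Delta_{ij}|\Big)^2
&\le C\sum_j\sum_{i\ne j}\mathbb{P}(E_{ij})+C\sum_j\sum_{\substack{i,i'\ne j\\ i\ne i'}}\mathbb{P}(E_{ij}\cap E_{i'j})\\
&\le C\sum_{j\le n}\Big(1+\ln\tfrac nj\Big)+C\sum_{a<b<c\le n}\frac{1}{bc},
\end{align*}
and both terms on the right are $O(n)$: the first by an elementary estimate, and the second because $\sum_{a<b<c\le n}\tfrac1{bc}\le\sum_{b<c\le n}\tfrac1c\le\sum_{c\le n}1=O(n)$. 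Together with the reduction of the first paragraph, this proves the lemma.

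The delicate point is the bound $\mathbb{P}(E_{ij}\cap E_{i'j})\le C/(i_{(3)}i_{(2)})$. Treating $E_{ij}$ and $E_{i'j}$ merely as conditionally independent given $(U_j,U_j')$ and using the crude estimate $\mathbb{P}(E_{ij}\mid U_j,U_j')\le C/(i-1)$ already produces a triple sum of order $n(\log n)^2$; to obtain the required $O(n)$ one genuinely has to exploit that $\{U_j,U_j'\}$ rarely meets $\{1,\dots,i-1\}$ when $i$ is small, and handling this uniformly over the three possible orderings of $i,i',j$ is where the work lies. Everything else is routine bookkeeping with the harmonic and power sums above.
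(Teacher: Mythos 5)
Your proof is correct, and it shares the paper's overall architecture: both apply the consequence (\ref{corthBE1}) of Theorem \ref{thBE1} to a vertex--degree functional of the random recursive tree built from the independent coordinates $U_2,\dots,U_n$, and both finish by absorbing the $O(1)$ errors in $\mathbb{E}(\mu(\mathbb{T}_n))$ and $\mathrm{Var}(\mu(\mathbb{T}_n))$ from Lemma \ref{lemma43} through a shift--and--rescale step (the paper's $\rho_n,R_n$ together with Petrov's lemma play exactly the role of your $\alpha_n,\beta_n$). Where you genuinely diverge is in the combinatorial core. The paper works with $f_*=\sum_{i<n}p^{D_{n,i}^*}$, writes the coordinate differences explicitly as $\sum_{k<i}(p^{I_{k,i}}-p^{I'_{k,i}})\prod_j p^{I_{k,j}}$, bounds $|\Delta_i|\le\sum_k Y_{k,i}$ and $|\Delta_{ij}|\le\sum_k Y_{k,i}Y_{k,j}$ with $Y_{k,i}=|I_{k,i}-I'_{k,i}|$, and then controls the resulting six-index sums by partitioning into ordering classes $\mathcal{Q}_m$ and using moment bounds such as $\mathbb{E}(Y_{k_1,i}Y_{k_2,i})\le Ci^{-2+I(k_1=k_2)}$. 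You instead use the deterministic bounds $|\Delta_i|\le 2$ and $|\Delta_{ij}|\le 16\,I(E_{ij})$ with $E_{ij}=\{\{U_i,U_i'\}\cap\{U_j,U_j'\}\ne\emptyset\}$ (which is indeed measurable with respect to the conditioning field $\sigma(\xi_1,\dots,\xi_{i\vee j},\xi_i',\xi_j')$, so the conditional expectation can be pulled through), reducing everything to two- and three-point collision probabilities. I checked your key estimate $\mathbb{P}(E_{ij}\cap E_{i'j})\le C/(i_{(3)}i_{(2)})$: conditioning on $(U_j,U_j')$, using $\mathbb{P}(E_{ij}\mid U_j,U_j')\le 2|\{U_j,U_j'\}\cap\{1,\dots,i-1\}|/(i-1)$ and treating the three orderings of $i,i',j$ separately does give this bound, and you correctly identify that the cruder estimate $C/((i-1)(i'-1))$ would only yield $n(\log n)^2$ and hence does not suffice. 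Your route buys a shorter and more transparent bookkeeping (sums over at most three indices rather than seven, and no partition into ordering classes of $k$-indices), at the cost of slightly cruder constants; the paper's expansion in the $Y_{k,i}$ is more mechanical but generalizes more readily to the other functionals it must treat (e.g., the two-vertex sums in Lemma \ref{lemmaadd1}). Both are complete proofs of the lemma.
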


\begin{proof}
Let  $I_{i,j}$ be defined in (\ref{Iij}), and define $D_{n,i}^*=\sum_{j=i+1}^{n} I_{i,j}$ for $1\le i< n$.
Let
\bestar
\sigma_*^{2}=\mbox{Var}\Big(\sum_{i=1}^{n-1} p^{D_{n,i}^*}\Big)~~~~~\mbox{and}~~~~~  W_*=\frac{1}{\sigma_*}\sum_{i=1}^{n-1}
\Big(p^{D_{n,i}^*}-\mathbb{E}(p^{D_{n,i}^*})\Big).
\eestar
Define the measurable function $f_*:\{1,2,\cdots, n-1\}^{n-1}\rightarrow \mathbb{R}$ by
\bestar
f_*(x)=\sum_{i=1}^{n-1} p^{\sum_{j=i+1}^n I(x_j=i)},~~~~x=(x_2,\cdots, x_{n}).
\eestar
Let $U=(U_2,\cdots, U_n)$, and let $U'=(U_2', \cdots, U_n')$ be an independent copy of $U$.  Define $U^{(A)}$ similarly as in (\ref{XA}) for any $A\subseteq \{2,3,\cdots, n\}$.
Then 
\bestar
\sum_{i=1}^{n-1} p^{D_{n,i}^*}=\sum_{i=1}^{n-1} p^{\sum_{j=i+1}^n I(U_j=i)}=f_*(U)
\eestar
and
\bestar
f_*(U)-f_*(U^{\{i\}})=\sum_{k=1}^{i-1} (p^{ I_{k,i} }-p^{ I_{k,i}'})\prod_{k<j\le n, j\ne i}p^{I_{k,j}},
\eestar
where $I'_{k,i}=I(U_{i}'=k)$.
Define $\Delta_i$ and $\Delta_{i,j}$ similarly as in Theorem \ref{thBE1}.
We have
\be
|\Delta_i|\le \sum_{k=1}^{i-1} |p^{ I_{k,i} }-p^{ I_{k,i}' }|=(1-p) \sum_{k=1}^{i-1} |I_{k,i}-I_{k,i}'|\le \sum_{k=1}^{i-1} Y_{k,i},   \label{Deltaia}
\ee
where $Y_{k,i}=|I_{k,i}-I_{k,i}'|$.
Similarly, for $i\ne j$,
\bestar
|\Delta_{ij}|
&\le &\sum_{k=1}^{i\wedge j-1} |p^{I_{k,i}}-p^{I_{k,i}'}| |p^{I_{k,j}}-p^{I_{k,j}'}|
\le  \sum_{k=1}^{i\wedge j-1} Y_{k,i}Y_{k,j}.
\eestar
 Therefore,
\bestar
\sum_{j=2}^n \mathbb{E}\Big(\sum_{i\ne j} |\Delta_i||\Delta_{ij}|\Big)^2&\le & \sum_{\mathbf{I}\in \mathcal{P}} \mathbb{E}(Y_{k_1, i}Y_{k_2, i}Y_{k_2,j}Y_{k_1',i'}Y_{k_2', i'}Y_{k_2', j})\\
&=&\sum_{\mathbf{I}\in \mathcal{P}_1}  \mathbb{E}(Y_{k_1, i}Y_{k_2, i}) \mathbb{E}(Y_{k_1',i'}Y_{k_2', i'})\mathbb{E}(Y_{k_2, j}Y_{k_2',j})\\
&&~~~~+\sum_{\mathbf{I}\in \mathcal{P}_2}  \mathbb{E}(Y_{k_1, i}Y_{k_2, i} Y_{k_1',i}Y_{k_2', i})\mathbb{E}(Y_{k_2, j}Y_{k_2',j})\\
&:=&I_{n,1}+I_{n,2},
\eestar
where $\mathbf{I}:=(i, j, k_1,k_2, i', k_1', k_2'),~\mathcal{P}_1=\mathcal{P}\cap \{\mathbf{I}: i\ne i'\}, ~\mathcal{P}_2= \mathcal{P}\cap \{\mathbf{I}: i=i'\}$ and
  $$ \mathcal{P}:=\{\mathbf{I}: 1\le k_1,k_2<i\le n, ~1\le k_1', k_2' <i'\le n, ~ k_2,k_2'<j\le n,~i,i'\ne j \}.$$

For any  $1\le k_1,k_2<i\le n$, we have
\be
 \mathbb{E}(Y_{k_1, i}Y_{k_2, i}) &=&\frac{2}{(i-1)^{2}}I(k_1\ne k_2)+\frac{2(i-2)}{(i-1)^{2}}I(k_1=k_2)\nonumber\\
&\le & C\Big(i^{-2}I(k_1\ne k_2)+i^{-1}I(k_1=k_2)\Big)=Ci^{-2+I(k_1= k_2)}.  \label{Ykii}
\ee
This implies that, for any $\mathbf{I}\in \mathcal{P}_1$,
\bestar
\mathbb{E}(Y_{k_1, i}Y_{k_2, i}) \mathbb{E}(Y_{k_1',i'}Y_{k_2', i'})\mathbb{E}(Y_{k_2, j}Y_{k_2',j})
\le Ci^{-2+I(k_1= k_2)}(i')^{-2+I(k_1'=k_2')}j^{-2+I(k_2=k_2')}.
\eestar
Based on the relative order of $i, j, k_1,k_2, i',
k_1', k_2'$, we partition $\mathcal{P}_1$ into subsets $\mathcal{Q}_1, \mathcal{Q}_2, \cdots, \mathcal{Q}_{n_0}$ with $n_0\in \mathbb{N}$:
\bestar
\mathcal{Q}_1&=&\{\mathbf{I}: ~\mathbf{I}\in \mathcal{P}_1, ~k_1=k_2=k_1'=k_2'<i<i'<j \}, \\
\mathcal{Q}_2&=&\{\mathbf{I}: ~\mathbf{I}\in \mathcal{P}_1, ~k_1<k_2=k_1'=k_2'<i<i'<j \},\\
&& \cdots \cdots
\eestar
For any $1\le m\le n_0$, choose $(\tilde{i}, \tilde{j}, \tilde{k}_1, \tilde{k}_2, \tilde{i'}, \tilde{k}_1', \tilde{k}_2')\in \mathcal{Q}_m$. We can without loss of generality  assume that $\tilde{i}<\tilde{i'}<\tilde{j}$.  Let $n_{\tilde{i}}=|\{k: k<\tilde{i},~ k\in \{\tilde{k}_1, \tilde{k}_2, \tilde{k}_1', \tilde{k}_2'\}\}|$,
and  similarly define $n_{\tilde{i'}}$ and $n_{\tilde{j}}$.
Observe that
$n_{\tilde{j}}=|\{\tilde{k_1}, \tilde{k_2}, \tilde{k_1'},\tilde{k_2'}\}|$, which represents the number of distinct elements in $\{\tilde{k_1}, \tilde{k_2}, \tilde{k_1'},\tilde{k_2'}\}$, and
\bestar
I(\tilde{k_1}= \tilde{k_2})+I(\tilde{k_1'}=\tilde{k_2'})+I(\tilde{k_2}=\tilde{k_2'})+|\{\tilde{k_1}, \tilde{k_2}, \tilde{k_1'},\tilde{k_2'}\}|\le 4. \label{mjmj2}
\eestar
We obtain that
\bestar
&&\sum_{\mathbf{I}\in \mathcal{Q}_m}\mathbb{E}(Y_{k_1, i}Y_{k_2, i}) \mathbb{E}(Y_{k_1',i'}Y_{k_2', i'})\mathbb{E}(Y_{k_2, j}Y_{k_2',j})\\
&\le& C\sum_{\mathbf{I}\in \mathcal{Q}_m}i^{-2+I(k_1= k_2)}(i')^{-2+I(k_1'=k_2')}j^{-2+I(k_2=k_2')}\\
&\le& C\sum_* ~i^{-2+I(\tilde{k}_1= \tilde{k}_2)}(i')^{-2+I(\tilde{k}_1'=\tilde{k}_2')}j^{-2+I(\tilde{k}_2=\tilde{k}_2')}\\
&\le& Cn^{-3+I(\tilde{k}_1= \tilde{k}_2)+I(\tilde{k}_1'=\tilde{k}_2')+I(\tilde{k}_2=\tilde{k}_2')+n_{\tilde{j}}}\le Cn,
\eestar
where the sum $\sum\limits_*$ is over all $k_1,\cdots, k_{n_{\tilde{j}}}, \tilde{i}, \tilde{i'}, \tilde{j}$ satisfying that $1\le k_1<\cdots<k_{n_{\tilde{i}}}<i<k_{n_{\tilde{i}}+1}<\cdots<k_{n_{\tilde{i'}}}<i'<k_{n_{\tilde{i'}}+1}<\cdots<k_{n_{\tilde{j}}}<j\le n$.
Therefore,
\be
I_{n,1}\le Cn.  \label{In1}
\ee

Note that for $k_1,k_2,k_1',k_2'<i$, we have
\be
\mathbb{E}(Y_{k_1, i}Y_{k_2, i}Y_{k_1',i}Y_{k_2', i})&=&
\left\{
\begin{array}{cc}
0, &  |\{k_1, k_2, k_1', k_2'\}|\ge 3\\
2(i-1)^{-2}, &  |\{k_1, k_2, k_1', k_2'\}|=2\\
2(i-2)(i-1)^{-2}, &  |\{k_1, k_2, k_1', k_2'\}|=1
\end{array}
\right.\nonumber\\
&\le & Ci^{-|\{k_1, k_2, k_1', k_2'\}|}.  \label{EYi4}
\ee
It follows from (\ref{Ykii}) and  (\ref{EYi4}) that
\bestar
\mathbb{E}(Y_{k_1, i}Y_{k_2, i} Y_{k_1',i}Y_{k_2', i})\mathbb{E}(Y_{k_2, j}Y_{k_2',j})\le Ci^{-|\{k_1, k_2, k_1', k_2'\}|} j^{-1}.
\eestar
Similar arguments as in the proof of (\ref{In1}) show that $I_{n, 2}\le Cn$ and hence
\be
\sum_{j=2}^n \mathbb{E}\Big(\sum_{i\ne j} |\Delta_i||\Delta_{ij}|\Big)^2\le I_{n, 1}+I_{n, 2} \le Cn.  \label{Deltaij21}
\ee

Similarly, by (\ref{Deltaia}) and (\ref{EYi4}), we also have
\be
\sum_{i=2}^n \mathbb{E} (\Delta_i^4 ) &\le&  \sum_{i=2}^n \mathbb{E} \Big(\sum_{k=1}^{i-1} Y_{k,i}\Big)^4=\sum_{i=2}^n \sum_{k_1,k_2,k_1',k_2'=1}^{i-1} \mathbb{E}(Y_{k_1, i}Y_{k_2, i}Y_{k_1',i}Y_{k_2', i})\nonumber\\
&\le& C\sum_{i=2}^n \sum_{k_1,k_2,k_1',k_2'=1}^{i-1} i^{-|\{k_1, k_2, k_1', k_2'\}|}\le Cn.     \label{Deltai21}
\ee

Recall that (\ref{Varsump*}) shows that $\sigma_*^{2}=p^{-2}\sigma_{3}^2n+O(1)$. By (\ref{corthBE1}),  (\ref{Deltaij21}) and
(\ref{Deltai21}), we have
\be
d_K(W_*, \mathbf{Z})\le Cn^{-1/2}.  \label{BEWstar}
\ee
Define
\bestar
\rho_n=\frac{p\sigma_*}{\sigma_{3}\sqrt{n}}, ~~~~R_n=\frac{1}{\sigma_{3}\sqrt{n}} \Big(\mu(\mathbb{T}_n)-p\sum_{i=1}^{n-1} p^{D_{n,i}^*}+p\Big(\sum_{i=1}^{n-1} \mathbb{E}(p^{D_{n,i}^*})-\frac{n}{2-p}\Big)\Big).
\eestar
Then by (\ref{munT}), we have
\bestar
\frac{\mu(\mathbb{T}_n)-\frac{np}{2-p}}{\sigma_{3}\sqrt{n}}=\rho_n W_* + R_n.
\eestar
Observe that
\bestar
p\sum_{i=1}^{n-1} p^{D_{n,i}^*}\le \mu(\mathbb{T}_n)=p\sum_{i=1}^{n-1} p^{D_{n,i}^*}+(1-p) p^{D_{n,1}^*}\le p\sum_{i=1}^{n-1} p^{D_{n,i}^*}+1-p.
\eestar
By (\ref{Esump*}) and (\ref{Varsump*}), we have $\rho_n=1+O(n^{-1})$ and $|R_n|\le c_2n^{-1/2}$ for some $c_2>0$.
Hence
\be
d_K\Big(\frac{\mu(\mathbb{T}_n)-\frac{np}{2-p}}{\sigma_{3}\sqrt{n}}, \mathbf{Z}\Big)
\le \max\{ d_n(c_2), ~d_n(-c_2)\},   \label{BEWstar2}
\ee
where
\bestar
d_n(c)=\sup_{x\in \mathbb{R}}\Big|\mathbb{P}(W_*\le \rho_n^{-1}(x-c n^{-1/2}))-\Phi(x)\Big|,~~~~c\in \mathbb{R}.
\eestar
Applying Lemma 5.2 in \cite{Petrov1995} gives $\sup_{x\in \mathbb{R}} |\Phi(\rho_n^{-1}(x-c n^{-1/2}))-\Phi(x)|\le Cn^{-1/2}$. By (\ref{BEWstar}), we have
\bestar
d_n(c)\le d_K(W_*, \mathbf{Z})
+\sup_{x\in \mathbb{R}} |\Phi(\rho_n^{-1}(x-c n^{-1/2}))-\Phi(x)|\le Cn^{-1/2}.
\eestar
Therefore  the desired result follows from (\ref{BEWstar2}).
\end{proof}

\section{Proofs of Propositions \ref{lemmaadd2}-\ref{lemmanu1nBE}}\label{secta3}

\subsection{Proof of Proposition \ref{lemmaadd2}}
Define $\mu(\mathbb{T}_n)$ and $\sigma^2(\mathbb{T}_n)$ as in
(\ref{munT}) and (\ref{sigmanT}), respectively.
 It follows from  Lemmas \ref{lemma43} and \ref{lemmaadd1} that
\bestar
\mathbb{E}(\nu_1(n))=\mathbb{E}(\mathbb{E}(\nu_1(n)|\mathbb{T}_n))=\mathbb{E}(\mu(\mathbb{T}_n))=\frac{np}{2-p}+O(1)
\eestar
and
\bestar
\mbox{Var}(\nu_1(n))&=&\mbox{Var}(\mathbb{E}(\nu_1(n)|\mathbb{T}_n))+\mathbb{E}(\mbox{Var}(\nu_1(n)|\mathbb{T}_n))\\
&=&\mbox{Var}(\mu(\mathbb{T}_n))+ \mathbb{E}(\sigma^2(\mathbb{T}_n))\le Cn.
\eestar

We will now consider $\nu_2(n)$. Define $I_{i,j}$ and $D_{ni}$ as in (\ref{Iij}) and (\ref{Dni}), respectively. 
Similar arguments as in the proofs of (\ref{munT}) and (\ref{sigmanT})   show that
\be
\mathbb{E}(\nu_2(n)|\mathbb{T}_n)&=&\sum_{(i,j)\in \mathbb{T}_n, i<j} p^{D_{n,i}+D_{n,j}-2}(1-p)=(1-p)\sum_{1\le i<j\le n}I_{i,j}p^{D_{n,i}+D_{n,j}-2}, ~~~~ \label{Enu2}\\
\mbox{Var}(\nu_2(n)|\mathbb{T}_n)&\le&\sum_{(i,j)\in \mathbb{T}_n, i<j} \Big(p^{D_{n,i}+D_{n,j}-2}(1-p)- p^{2D_{n,i}+2D_{n,j}-4}(1-p)^2\Big)\nonumber\\
&&+2\sum_{(i,j,i',j') \in \mathcal{R}} p^{D_{n,i}+D_{n,j}+D_{n,i'}+D_{n,j'}-5}(1-p)^3\nonumber\\
&\le& \sum_{1\le i<j\le n}I_{i,j}p^{D_{n,i}+D_{n,j}-2}+|\mathcal{R}|,  \label{Varnu2}
\ee
where
\bestar
\mathcal{R}&=&\{(i,j,i',j'): |\{i,j,i',j'\}|=4, (i,j), (i',j')\in \mathbb{T}_n, i<j, i'<j', i<i'\\
&&~~~~~~~~~~~~~~~~\mbox{~and there exists}~ k\in \{i,j\}, k' \in \{i',j'\}~\mbox{such that}~(k,k')\in \mathbb{T}_n \}\\
&=& \{(i,j,i',j')\in \mathcal{I}:  U_j=i, U_{i'}\in \{i,j\}, U_{j'}=i'\},
\eestar
and $\mathcal{I}=\{(i,j,i',j'): ~ |\{i,j,i',j'\}|=4, ~1\le i,j,i',j'\le n, ~i<j, ~i'<j',~i<i'\}$.
Observe that if $ (i,j,i',j')\in \mathcal{I}$, then
\bestar
\mathbb{P}(U_j=i, U_{i'}\in \{i,j\}, U_{j'}=i')\le \frac{2}{(j-1)(i'-1)(j'-1)}.
\eestar
This implies that
\bestar
\mathbb{E}|\mathcal{R}|\le \sum_{i,i',j,j'} \mathbb{P}((i,j,i',j')\in \mathcal{R})\le C\sum_{(i,j,i',j')\in \mathcal{I}} \frac{1}{ji'j'}\le Cn.
\eestar
By applying (\ref{EIji}),  (\ref{varsumiji}), (\ref{Enu2}) and (\ref{Varnu2}), we have
\bestar
\mathbb{E}(\nu_2(n))&=&\mathbb{E}(\mathbb{E}(\nu_2(n)|\mathbb{T}_n))=(1-p)\mathbb{E}\Big(\sum_{1\le i<j\le n} I_{i,j}p^{D_{n,i}+D_{n,j}-2} \Big)\\
&=&\frac{p(1-p)n}{(2-p)(3-2p)}+O(1)
\eestar
and
\bestar
\mbox{Var}(\nu_2(n))&=&\mbox{Var}(\mathbb{E}(\nu_2(n)|\mathbb{T}_n))+\mathbb{E}(\mbox{Var}(\nu_2(n)|\mathbb{T}_n))\\
&\le &\mbox{Var}\Big(\sum_{1\le i<j\le n}I_{i,j}p^{D_{n,i}+D_{n,j}-2}\Big)+ \mathbb{E}\Big(\sum_{1\le i<j\le n}I_{i,j}p^{D_{n,i}+D_{n,j}-2}\Big)+\mathbb{E}|\mathcal{R}|\\
&\le& Cn.
\eestar
The proof of Proposition \ref{lemmaadd2} is complete.

\subsection{Proof of Proposition \ref{lemma38}}

\begin{proof}[Proof of (\ref{Zln})] 
 In this proof,  let $C_l$ be a constant  depending only on $l$ and $p$ that may take a different value in each appearance.

The initial step is to establish that for any $m \in \mathbb{N}$, ~$\mathbb{E}(Z_{l}(n)) \le C_l b_{l}(n)$ holds for any $n\in \mathbb{N}$ and any $m-1\le l<m$. This will be proved by induction on $m$.

For $m=1$,  the inequality is obvious since $\mathbb{E}(Z_{l}(n))\le \mathbb{E}(\sum_{k=1}^n k \nu_k(n))=n$ for any $0\le l<1$.
Now, assuming the result holds for $m=r\ge 1$, we proceed to prove it for $m=r+1$.
If $\epsilon_{n+1}=0$ and  $U_{n+1}$ belongs to a cluster of size $k$
 in the percolation at time $n$, then we have $\nu_k(n+1)=\nu_k(n)-1, \nu_{k+1}(n+1)=\nu_{k+1}(n)+1$ and $\nu_i(n+1)=\nu_i(n)$ for any $i\not \in \{k, k+1\}$.
Hence, in this case,  we have
$
 Z_{l}(n+1)-Z_{l}(n)= (k+1)^l-k^l.
$
Recalling the definition of $\{\nu_k(n)\}$ in (\ref{nuk}), 
this implies that
\bestar
\mathbb{E}\Big(Z_{l}(n+1)-Z_{l}(n)\Big|\mathscr{H}_n, \epsilon_{n+1}=0\Big)
=\sum_{k=1}^{n} \frac{k\nu_{k}(n)}{n} ((k+1)^l-k^l),
\eestar
where $\mathscr{H}_1=\{\emptyset, \Omega\}$ and $\mathscr{H}_n=\sigma(U_2,\cdots, U_n)$ for $n\ge 2$.
Noting that  $Z_{l}(n+1)-Z_{l}(n)=1$ in the case $\epsilon_{n+1}=1$  gives
\be
\mathbb{E}\Big(Z_{l}(n+1)-Z_{l}(n)\Big|\mathscr{H}_n\Big)=p+(1-p)\sum_{k=1}^{n} \frac{k\nu_{k}(n)}{n} ((k+1)^l-k^l). \label{condrecur}
\ee
Applying Taylor's formula, 
\bestar
(k+1)^l-k^l-lk^{l-1}=\frac{l(l-1)}{2}\theta_k^{l-2}\le l(l-1)2^{|l-2|-1} k^{l-2}
\eestar
holds for $k\ge 1$ and $l\ge 1$, where $\theta_k\in (k, k+1)$.
It follows from (\ref{condrecur})  that
\bestar
\mathbb{E}(Z_{l}(n+1))-\mathbb{E}(Z_{l}(n))&=&p+(1-p)\sum_{k=1}^{n} \frac{k \mathbb{E}(\nu_{k}(n))}{n} ((k+1)^l-k^l)\\
&\le & p+l(1-p)n^{-1}\mathbb{E}(Z_{l}(n))+C_ln^{-1}\mathbb{E}(Z_{l-1}(n)).
\eestar
By  the induction hypothesis, we have
\be
\mathbb{E}(Z_{l}(n+1)) \le  \frac{n+l(1-p)}{n}\mathbb{E}(Z_{l}(n))+C_ln^{-1}b_{l-1}(n). \label{induction}
\ee
If $l(1-p)=1$, then $b_{l-1}(n)=n$ and
$
\mathbb{E}(Z_{l}(n+1))\le (1+1/n)\mathbb{E}(Z_{l}(n))+C_l.
$
Hence,
\bestar
\mathbb{E}(Z_{l}(n))\le C_l \,n\sum_{k=1}^{n} \frac{1}{k}\le C_ln\log n.
\eestar
If $l(1-p)\ne 1$ and $(l-1)(1-p)<1$, then $b_{l-1}(n)=n$ and by  (\ref{induction}),
\bestar
\mathbb{E}(Z_{l}(n+1))+\frac{C_l(n+1)}{l(1-p)-1} \le \frac{n+l(1-p)}{n}\Big(\mathbb{E}(Z_{l}(n))+\frac{C_ln}{l(1-p)-1} \Big).
\eestar
This implies that
\bestar
\mathbb{E}(Z_{l}(n))+\frac{C_ln}{l(1-p)-1} \le \Big(1+\frac{C_l}{l(1-p)-1}\Big)a_{n}(l),
\eestar
where
\be
a_n(l) = \prod_{k=1}^{n-1} \frac{k+l(1-p)}{k} = \frac{\Gamma(n+l(1-p))}{\Gamma(n)\Gamma(l(1-p)+1)}=\frac{n^{l(1-p)}}{\Gamma(l(1-p)+1)} (1+O(n^{-1})),
\label{anl}
\ee
and $\Gamma(\cdot)$ stands for the Gamma function.
Hence
\bestar
\mathbb{E}(Z_{l}(n))\le C_ln^{(l(1-p))\vee 1}.
\eestar
If $(l-1)(1-p)\ge 1$, then by recalling the definition (\ref{bln}), 
we have $b_{l-1}(n+1)-b_{l-1}(n)\le (l-1/2)(1-p)n^{-1}b_{l-1}(n)$ for large $n$. This together with (\ref{induction}) yields that
\bestar
\mathbb{E}(Z_{l}(n+1))+\frac{2C_lb_{l-1}(n+1)}{1-p} \le \frac{n+l(1-p)}{n}\Big(\mathbb{E}(Z_{l}(n))+\frac{2C_lb_{l-1}(n)}{1-p}\Big),
\eestar
and hence
\bestar
\mathbb{E}(Z_{l}(n))\le C_ln^{l(1-p)}.
\eestar
Combining the above facts completes the induction. Therefore,  $\mathbb{E}(Z_{l}(n)) \le C_l b_{l}(n)$ for $l\ge 0$. Similarly, we can obtain that  $\mathbb{E}(Z_{l}(n)) \ge C_l b_{l}(n)$ for $l\ge 0$
and hence the desired result follows.
\end{proof}

\begin{proof}[Proof of (\ref{EZ2na})] 
Let $\mathscr{H}_1=\{\emptyset, \Omega\}$ and $\mathscr{H}_n=\sigma(U_2,\cdots, U_n)$ for $n\ge 2$.
Since $\sum_{k=1}^{n} k\nu_{k}(n)=n$,  it follows from (\ref{condrecur}) that
\be
\mathbb{E}\Big(Z_{2}(n+1)-Z_{2}(n)\Big|\mathscr{H}_n\Big)
=p+(1-p)\sum_{k=1}^{n} \frac{k\nu_{k}(n)}{n} (2k+1)= \frac{2(1-p)}{n} Z_{2}(n)+1.  \label{Z2nind}
\ee
Set $\gamma_n = \frac{n+2(1-p)}{n}$, then we have
\be
\mathbb{E}(Z_{2}(n+1))=\gamma_n \mathbb{E}(Z_{2}(n))+1.  \label{EZ2n}
\ee
If $p=1/2$, then 
\bestar
\mathbb{E}(Z_{2}(n))=n\sum_{k=1}^{n} \frac{1}{k}= n\log n +\gamma n+O(1),
\eestar
where $\gamma$ is Euler's constant.
If $p\ne 1/2$, then
\bestar
\mathbb{E}(Z_{2}(n+1))+\frac{n+1}{1-2p}=\gamma_{n} \Big(\mathbb{E}(Z_{2}(n))+\frac{n}{1-2p}\Big),
\eestar
which implies that
\bestar
\mathbb{E}(Z_2(n))=\frac{2(1-p)a_n(2)-n}{1-2p}=\Big(\frac{n^{2-2p}}{(1-2p)\Gamma(2-2p)}-\frac{n}{1-2p}\Big)(1+O(n^{-1})),
\eestar
where $a_n(2)$ is defined in (\ref{anl}) with $l=2$.
Therefore  $
\mathbb{E}(Z_{2}(n))=b_n (1+O(n^{-1}))$ holds for $p\in (0,1)$.

We will now proceed to estimate $\mbox{Var}(Z_{2}(n))$.
Similar arguments as in the proof of (\ref{condrecur}) yield that
\bestar
\mathbb{E}\Big((Z_{2}(n+1)-Z_{2}(n))^2\Big|\mathscr{H}_n\Big)
&=&4(1-p)\sum_{k=1}^{n} \frac{(k^3+k^2)\nu_{k}(n)}{n}+1\\
&=&\frac{4(1-p)(Z_{3}(n)+Z_2(n))}{n}+1.
\eestar
This, together with (\ref{Z2nind}),  implies
\bestar
\mathbb{E}\Big(Z_{2}^2(n+1)\Big|\mathscr{H}_n\Big)
&=&\mathbb{E}\Big((Z_{2}(n+1)-Z_{2}(n))^2\Big|\mathscr{H}_n\Big)+2Z_2(n) \mathbb{E}\Big(Z_{2}(n+1)\Big|\mathscr{H}_n\Big) -Z_n^2\\
&=& \frac{4(1-p)}{n}Z_{3}(n)+\gamma_n'Z_2^2(n)+2\gamma_n Z_2(n)+1,
\eestar
where $\gamma_n'=2\gamma_n-1=\frac{n+4(1-p)}{n}$. Hence
\bestar
\mathbb{E}(Z_{2}^2(n+1))= \gamma_n' \mathbb{E}(Z_2^2(n))+\frac{4(1-p)}{n}\mathbb{E}(Z_{3}(n))+2\gamma_n\mathbb{E}(Z_2(n))+1.
\eestar
Applying (\ref{EZ2n}) shows that
\bestar
(\mathbb{E}(Z_{2}(n+1)))^2&=&(\gamma_{n} \mathbb{E}(Z_{2}(n))+1)^2\\
&=&\gamma_n'(\mathbb{E}(Z_2(n)))^2+\frac{4(1-p)^2}{n^2}(\mathbb{E}(Z_2(n)))^2+2\gamma_{n} \mathbb{E}(Z_{2}(n))+1.
\eestar
By letting
\bestar
\alpha_n=\frac{4(1-p)}{n}\mathbb{E}(Z_{3}(n))-\frac{4(1-p)^2}{n^2}(\mathbb{E}(Z_2(n)))^2,
\eestar
we have
\bestar
\mbox{Var}(Z_{2}(n+1)))=\gamma_n' \mbox{Var}(Z_{2}(n)))+\alpha_n,
\eestar
and hence, by noting that  $\mbox{Var}(Z_{2}(1)))=0$,
	\be
	\mbox{Var}(Z_{2}(n+1))) = 
\sum_{j=1}^{n-1} \prod_{k=j+1}^{n} \gamma_k' \alpha_j  +  \alpha_n. \label{svar}
	\ee

Applying (\ref{Zln}) shows that $\alpha_n\le Cn^{-1}b_3(n)\le Cb_4(n)$.
Note that
	\bestar
	 \sum_{j=1}^{n-1} \prod_{k=j+1}^{n} \gamma_k' \alpha_j&\le& C \sum_{j=1}^{n-1} j^{-1}b_3(j) \prod_{k=j+1}^{n} \frac{k+4-4p}{k}\\
     &=& C\sum_{j=1}^{n-1} j^{-1}b_3(j)\exp\left(\sum_{k=j+1}^{n} \log \left(1+\frac{4-4p}{k}\right)\right)\\
	&=& C\sum_{j=1}^{n-1}j^{-1}b_3(j) \exp \left(\sum_{k=j+1}^{n} \left(\frac{4-4p}{k}  +O\left(\frac{1}{k^2}\right) \right) \right) \\
	&\le & C\sum_{j=1}^{n-1} j^{-1}b_3(j)\exp \left((4-4p)\log \frac{n}{j}\right)\\
	&=& C\sum_{j=1}^{n-1}  j^{-1}b_3(j) \left(\frac{j}{n}\right)^{4p-4}\le  Cb_4(n).
	\eestar
It follows from (\ref{svar}) that
	$\mbox{Var}(Z_{2}(n)))\le  Cb_4(n)$.
\end{proof}

\subsection{Proof of Proposition \ref{lemmanu1nBE}}

Applying Proposition \ref{proppercolation} to $\mathbb{T}_n$ with $\widetilde{p}=1-p$ gives
\be
\sup_{x\in \mathbb{R}}\Big|\mathbb{P}\Big(\frac{\nu_1(n)-\mu(\mathbb{T}_n)}{\sigma(\mathbb{T}_n)}\le x \Big| \mathbb{T}_n \Big)-\Phi(x)\Big|\le \frac{C\sqrt{n}+C(\sum_{i=1}^n D_{n,i}^3)^{1/2}}{\sigma^2(\mathbb{T}_n)},  \label{BEmu}
\ee
where $D_{n,i}, \mu(\mathbb{T}_n)$ and $\sigma^2(\mathbb{T}_n)$ are defined in (\ref{Dni}), (\ref{munT}) and (\ref{sigmanT}), respectively.
 Lemma \ref{lemmaadd1} shows the existence of $c_3>0$ such that
$
\mathbb{E}(\sigma^2(\mathbb{T}_n))\ge c_3n
$
holds for all $n\ge 1$, and hence
\bestar
\mathbb{P}(\widetilde{E}_n^c)\le \mathbb{P}(\sigma^2(\mathbb{T}_n)-\mathbb{E}(\sigma^2(\mathbb{T}_n))\le -(c_3/2)n) \le \frac{\mbox{Var}(\sigma^2(\mathbb{T}_n))}{c_3^2n^2/4}\le Cn^{-1},
\eestar
where
$
\widetilde{E}_n=\{\sigma^2(\mathbb{T}_n)\ge (c_3/2)n\}.
$
It follows from Lemma \ref{lemma1} and (\ref{BEmu}) that
\bestar
 &&\mathbb{E}\Big(\sup_{x\in \mathbb{R}}\Big|\mathbb{P}\Big(\frac{\nu_1(n)-\mu(\mathbb{T}_n)}{\sigma(\mathbb{T}_n)}\le x \Big| \mathbb{T}_n \Big)-\Phi(x)\Big|\Big)\\
&\le& \mathbb{P}(\widetilde{E}_n^c)+\mathbb{E}\Big(\sup_{x\in \mathbb{R}}\Big|\mathbb{P}\Big(\frac{\nu_1(n)-\mu(\mathbb{T}_n)}{\sigma(\mathbb{T}_n)}\le x \Big| \mathbb{T}_n \Big)-\Phi(x)\Big|I_{\widetilde{E}_n}\Big)\\
&\le& Cn^{-1}+Cn^{-1} \Big(\sqrt{n}+\Big(\mathbb{E} \sum_{i=1}^n D_{n,i}^3\Big)^{1/2}\Big)\le Cn^{-1/2}.
\eestar
Hence
\be
&&\sup_{x\in \mathbb{R}}\Big|\mathbb{P}\Big(\frac{\nu_1(n)-\frac{np}{2-p}}{\sigma_{1}\sqrt{n}}\le x \Big)-\mathbb{E}\Big(\Phi\Big( \frac{\sigma_{1} x\sqrt{n}-\mu(\mathbb{T}_n)+\frac{np}{2-p}}{\sigma(\mathbb{T}_n)} \Big)\Big)\Big|\nonumber\\
&\le&  \mathbb{E}\Big(\sup_{x\in \mathbb{R}}\Big|\mathbb{P}\Big(\frac{\nu_1(n)-\mu(\mathbb{T}_n)}{\sigma(\mathbb{T}_n)}\le \frac{\sigma_{1} x\sqrt{n}-(\mu(\mathbb{T}_n)-\frac{np}{2-p})}{\sigma(\mathbb{T}_n)}\Big| \mathbb{T}_n \Big)\nonumber\\
&&~~~~~~~~~~~~~~~~~~~~~~~~~~~~~~~~~~~~~~-\Phi\Big( \frac{\sigma_{1} x\sqrt{n}-\mu(\mathbb{T}_n)+\frac{np}{2-p}}{\sigma(\mathbb{T}_n)} \Big)\Big|\Big)\le Cn^{-1/2}.  \qquad \label{pro2.3.1}
\ee
Using similar arguments as in the proofs of (\ref{diffadd2}) and (\ref{diffadd3}) and applying Lemmas \ref{lemmaadd1}-\ref{lemma41} gives
\be
&&\sup_{x\in \mathbb{R}}\Big|\mathbb{E}\Big(\Phi\Big( \frac{\sigma_{1} x\sqrt{n}-\mu(\mathbb{T}_n)+\frac{np}{2-p}}{\sigma(\mathbb{T}_n)} \Big)\Big)-
\mathbb{E}\Big(\Phi\Big( \frac{\sigma_{1} x\sqrt{n}-\mu(\mathbb{T}_n)+\frac{np}{2-p}}{\sigma_{4}\sqrt{n}} \Big)\Big)\Big|\nonumber\\
&\le& C\frac{\mathbb{E}|\sigma^2(\mathbb{T}_n)-\sigma_{4}^2n|}{\sigma_{4}^2n}
\le  C\frac{\mathbb{E}|\sigma^2(\mathbb{T}_n)-\mathbb{E}(\sigma^2(\mathbb{T}_n))|+|\mathbb{E}(\sigma^2(\mathbb{T}_n))-\sigma_{4}^2n|}{\sigma_{4}^2n}\nonumber\\
&\le&  C\frac{\sqrt{\mbox{Var}(\sigma^2(\mathbb{T}_n))}+|\mathbb{E}(\sigma^2(\mathbb{T}_n))-\sigma_{4}^2n|}{\sigma_{4}^2n}\le Cn^{-1/2},
 \label{pro2.3.2}
\ee
and
\be
&& \sup_{x\in \mathbb{R}}\Big|
\mathbb{E}\Big(\Phi\Big( \frac{\sigma_{1} x\sqrt{n}-\mu(\mathbb{T}_n)+\frac{np}{2-p}}{\sigma_{4}\sqrt{n}} \Big)\Big)-
\mathbb{E}\Big(\Phi\Big(\frac{\sigma_{1}  x-\mathbf{Z}_1}{\sigma_{4}}\Big)\Big)\Big|\le Cn^{-1/2},    \label{pro2.3.3}
\ee
where $\mathbf{Z}_1\sim N(0, \sigma_{3}^2)$,    $\sigma_{3}^2$ and $\sigma_{4}^2$ are defined in (\ref{sigma1}) and  (\ref{sigma2}) respectively.
By noting that $\sigma_{3}^2+\sigma_{4}^2=\sigma_{1}^2$, we obtain that
\bestar
\mathbb{E}\Big(\Phi\Big(\frac{\sigma_{1}  x- \mathbf{Z}_1}{\sigma_{4}}\Big)\Big)&=&\mathbb{P}\Big(\mathbf{Z}\le \frac{\sigma_{1}  x- \mathbf{Z}_1}{\sigma_{4}}\Big)
=\mathbb{P}\Big(\frac{\sigma_{4}\mathbf{Z}+\mathbf{Z}_1}{\sigma_{1} }\le x\Big)\\
&=& \mathbb{P}\Big(\frac{\sqrt{\sigma_{4}^2+ \sigma_{3}^2}}{\sigma_{1} }\mathbf{Z}\le x\Big)=\Phi(x),
\eestar
where $\mathbf{Z}$ is a standard normal random variable  and  independent of $\mathbf{Z}_1$.  This, together with  (\ref{pro2.3.1})-(\ref{pro2.3.3}), 
proves (\ref{nu1BE}) and completes the proof of Proposition \ref{lemmanu1nBE}.

\section*{Acknowledgment}
The author wishes to thank Professor Qi-Man Shao for some helpful  suggestions  and for providing the unpublished manuscript \cite{S2021}, in which the conclusion \eqref{dKexpre} is presented without proof and holds for a general $D$ that satisfies certain conditions.

\end{document}